\theoremstyle{plain}
\newtheorem{propn}{Proposition}[section]
\newtheorem{thm}[propn]{Theorem}
\newtheorem{lemma}[propn]{Lemma}
\newtheorem{cor}[propn]{Corollary}
\theoremstyle{definition}
\theoremstyle{remark}
\newtheorem*{rem}{Remark}
\newtheorem*{rems}{Remarks}
\newtheorem*{eg}{Example}
\newtheorem*{notation}{Notation}
\newcommand{\al}{\alpha}
\newcommand{\be}{\beta}
\newcommand{\Ga}{\Gamma}
\newcommand{\de}{\delta}
\newcommand{\ka}{\kappa}
\newcommand{\la}{\lambda}
\newcommand{\La}{\Lambda}
\newcommand{\vp}{\varpi}
\newcommand{\om}{\omega}
\newcommand{\Om}{\Omega}
\newcommand{\Hil}{\mathsf{H}}
\newcommand{\hil}{\mathsf{h}}
\newcommand{\Kil}{\mathsf{K}}
\newcommand{\kil}{\mathsf{k}}
\newcommand{\Lil}{\mathsf{L}}
\newcommand{\Til}{\mathsf{T}}
\newcommand{\Al}{\mathsf{A}}
\newcommand{\Alg}{\mathcal{A}}
\newcommand{\Cil}{\mathsf{C}}
\newcommand{\Ul}{\mathsf{U}}
\newcommand{\Vil}{\mathsf{V}}
\newcommand{\Wil}{\mathsf{W}}
\newcommand{\uCil}{{}^u \Cil}
\newcommand{\Cpt}{\mathcal{K}}
\newcommand{\ou}{\Ul}
\newcommand{\ov}{\Vil}
\newcommand{\ow}{\Wil}
\newcommand{\init}{\mathfrak{h}}
\newcommand{\noise}{\mathsf{k}}
\newcommand{\khat}{\wh{\noise}}
\newcommand{\Fock}{\mathcal{F}}
\newcommand{\Exps}{\mathcal{E}}
\newcommand{\Step}{\mathbb{S}}
\newcommand{\Real}{\mathbb{R}}
\newcommand{\Rplus}{\Real_+}
\newcommand{\Comp}{\mathbb{C}}
\newcommand{\Nat}{\mathbb{N}}
\newcommand{\ip}[2]{\langle #1, #2 \rangle}
\newcommand{\norm}[1]{\lVert #1 \rVert}
\newcommand{\cbnorm}[1]{\lVert #1 \rVert_\cb}
\newcommand{\w}[1]{\varpi (#1)}
\newcommand{\wxt}{\varpi^\bfx_t}
\newcommand{\wTt}{\varpi^\Til_t}
\newcommand{\bra}[1]{\langle #1 |}
\newcommand{\ket}[1]{| #1 \rangle}
\newcommand{\dyad}[2]{| #1 \rangle \langle #2 |}
\newcommand{\ind}{\mathbf{1}}
\newcommand{\Mat}{\mathrm{M}}
\newcommand{\Diag}{\mathrm{D}}
\newcommand{\flowk}[1]{\mbox{}^{#1} \!k}
\newcommand{\bfu}{\mathbf{u}}
\newcommand{\bfx}{\mathbf{x}}
\newcommand{\bff}{\mathbf{f}}
\newcommand{\cb}{{\text{\tu{cb}}}}
\newcommand{\bd}{{\text{\tu{b}}}}
\newcommand{\wh}{\widehat}
\newcommand{\wt}{\widetilde}
\newcommand{\ol}{\overline}
\newcommand{\ot}{\otimes}
\newcommand{\aot}{\underline{\ot}\,}
\newcommand{\otul}{\underline{\ot}\,}
\newcommand{\vot}{\overline{\ot}\,}
\newcommand{\sot}{\ot}
\newcommand{\otm}{\ot_\Mat}
\newcommand{\mot}{\,{}_\Mat\!\ot}
\newcommand{\op}{\oplus}
\newcommand{\bop}{\bigoplus}
\newcommand{\schur}{\boldsymbol{\cdot}}
\newcommand{\les}{\leqslant}
\newcommand{\ges}{\geqslant}
\newcommand{\To}{\rightarrow}
\newcommand{\Implies}{\Rightarrow}
\newcommand{\ts}{\textstyle}
\newcommand{\ti}{\textit}
\newcommand{\tu}{\textup}
\DeclareMathOperator{\Ran}{Ran}
\DeclareMathOperator{\Lin}{Lin}
\DeclareMathOperator{\id}{id}
\DeclareMathOperator{\im}{Im}
\DeclareMathOperator{\diag}{diag}
\DeclareMathOperator{\ad}{ad}
\DeclareMathOperator{\supp}{supp}
\newenvironment{alist}
{

\begin{enumerate}}
{\end{enumerate}}
\newenvironment{rlist}
{

\begin{enumerate}}
{\end{enumerate}}
\newcommand{\mapassocsemigp}{\mathcal{P}}
\newcommand{\mapassocsemigptwo}{\mathcal{Q}}
\newcommand{\opassocsemigp}{P}
\numberwithin{equation}{section}
\begin{document}

\title[Quantum stochastic semigroups on operator spaces]
{Quantum stochastic cocycles \\
and completely bounded semigroups \\
on operator spaces}
\author{J. Martin Lindsay}
\address{Department of Mathematics and Statistics \\ Lancaster
University \\ Lancaster LA1 4YF \\ UK}
\email{j.m.lindsay@lancs.ac.uk}
\author{Stephen J. Wills}
\address{School of Mathematical Sciences \\ University College
Cork \\ Cork \\ Ireland} \email{s.wills@ucc.ie}
\subjclass[2000]{Primary 81S25,
 % Secondary
 46L07, 47D06}
\keywords{Quantum stochastic cocycle, CCR flow, E-semigroup,
 operator space, operator system, matrix space,
  completely bounded, completely positive}
 \date{17.x.2010}

\begin{abstract}
An operator space analysis of quantum stochastic cocycles is
undertaken. These are cocycles with respect to an ampliated CCR
flow, adapted to the associated filtration of subspaces, or
subalgebras. They form a noncommutative analogue of stochastic
semigroups in the sense of Skorohod. One-to-one correspondences are
established between classes of cocycle of interest and corresponding
classes of one-parameter semigroups on associated matrix spaces.
Each of these `global' semigroups may be viewed as the expectation
semigroup of an associated quantum stochastic cocycle on the
corresponding matrix space. The classes of cocycle covered include
completely positive contraction cocycles on an operator system, or
$C^*$-algebra; completely contractive cocycles on an operator space;
and contraction operator cocycles on a Hilbert space. As indicated
by Accardi and Kozyrev, the Schur-action matrix semigroup viewpoint
circumvents technical (domain) limitations inherent in the theory of
quantum stochastic differential equations.
 An infinitesimal analysis of quantum stochastic
cocycles from the present wider perspective is given in a sister paper.
\end{abstract}

\maketitle

\section*{Introduction}
\label{intro}

Cocycles arise in classical and quantum probability theory, for
studying Markov processes and solutions of linear (quantum)
stochastic differential equations (\cite{Skorohod}, \cite{Pin},
\cite{qsc lects}), and in the study of $E_0$-semigroups and product
systems, as a means of perturbing such semigroups (\cite{frock}). In
the former context they are known classically as \emph{stochastic
semigroups}. The core algebraic notion is as follows. Let $\theta =
(\theta_t)_{t \ges 0}$ be a semigroup of linear maps on a vector
space $V$, thus $\theta_0 = \id$ and $\theta_{s+t} = \theta_s \circ
\theta_t$ for all $s,t \ges 0$. Then a family $J = (J_t)_{t \ges 0}$
of linear maps on $V$ is a $\theta$-cocycle if the composed maps
$(J_t \circ \theta_t)_{t \ges 0}$ again form a one-parameter
semigroup, that is
\begin{equation}\label{gen coc}
J_0 = \id, \quad J_{s+t} \circ \theta_{s+t} = J_s \circ \theta_s
\circ J_t \circ \theta_t \quad \text{ for all } s,t \ges 0.
\end{equation}
Further structure is usually imposed, in particular $V$ is often
taken to be an operator algebra $\Al$ and each $\theta_t$ a
${}^*$-homomorphism. For example in the case of $E_0$-semigroups,
$\Al = B(\hil)$ for an infinite-dimensional separable Hilbert space
$\hil$, each $\theta_t$ is unital and ${}^*$-homomorphic, and $J_t$
is given by $J_t = \ad U_t$, where $U$ is a unitary operator (right)
$\theta$-cocycle, that is a family of unitaries $(U_t)_{t \ges 0}$
on $\hil$ satisfying
\begin{equation}\label{gen op coc}
U_0 = I_\hil, \quad U_{s+t} = U_s \theta_s (U_t) \quad \text{ for
all } s,t \ges 0
\end{equation}
(which implies~\eqref{gen coc}). For application to models of
quantum spin systems, $V$ might be a uniformly hyperfinite
$C^*$-algebra with $J$ consisting of completely positive
contractions (\cite{AKphysics}) or *-homomorphisms (\cite{BeW}). For
applications to noncommutative geometry, $V$ would naturally be the
norm-closure of the smooth algebra of a spectral triple with $J$
consisting of *-homomorphisms describing a Brownian motion on the
noncommutative manifold (\cite{SinhaGoswami}). In quantum optics
many examples arise in which $V$ is of the form $B(\hil)$ and $J$ is
given by conjugation by a unitary operator cocycle~\eqref{gen op
coc} (see~\cite{LucBHS}, and references therein).

Note that in~\eqref{gen coc} each $J_t$ need only be defined
on the image of $\theta_t$. Moreover to incorporate concepts such as
Markovianity into this picture $V$ must be equipped with a time
localisation, or filtration, with respect to which the maps $\theta_t$
are adapted; this leads to further simplification in the description
of the maps $J_t$ which we outline next, \emph{without yet being precise
about} the types of tensor products involved. Fix Hilbert spaces
$\init$ and $\noise$. For measurable subsets $I \subset J$ of $\Rplus$
and $t>0$, let $\Fock_I$ denote the symmetric Fock space over $L^2 (I;
\noise)$, and set $\Fock := \Fock_{\Rplus}$. Then we have the
decomposition and inclusion
\[
\init \ot \Fock = \init \ot \Fock_{[0,t[} \ot \Fock_{[t,\infty[},
\quad \Fock_I \cong \Fock_I \ot \w{0_{J \setminus I}} \subset \Fock_J
\]
where $\w{0_{J \setminus I}}$ is the vacuum vector $(1,0,0, \ldots
)$ in $\Fock_{J \setminus I}$. The right shift by $t$ on $L^2
(\Rplus; \noise)$ induces a unitary operator $\init \ot \Fock \To
\init \ot \Fock_{[t,\infty[}$ by second quantisation and ampliation,
allowing the identification of each operator $A \in B(\init \ot
\Fock)$ with an operator on $\init \ot \Fock_{[t,\infty[}$, and
thence an operator $\sigma_t (A)$ on $\init \ot \Fock$ after
ampliating with the identity operator on $\Fock_{[0,t[}$. The
resulting family $\sigma = (\sigma_t)_{t \ges 0}$ is an
$E_0$-semigroup --- the $B(\init)$-ampliated CCR flow of index
$\kil$.

Suppose now that $(j_t)_{t \ges 0}$ is a family of linear maps from a
subalgebra $\Alg$ of $B(\init)$ to $B(\init \ot \Fock)$ satisfying the
initial condition and adaptedness property
\[
j_0 (a) = a \ot I_\Fock,
\quad
j_t (\Alg) \subset \Alg \ot B(\Fock_{[0,t[}) \subset \Alg \ot B(\Fock),
\quad
\text{ for all } t \ges 0.
\]
Let $\wh{\jmath}_t$ denote the tensor product of $j_t$ with the
identity map of $B(\Fock_{[t,\infty[})$, so that $\wh{\jmath}_t: \Alg
\ot B(\Fock_{[t,\infty[}) = \sigma_t (\Alg \ot B(\Fock)) \To \Alg \ot
B(\Fock_{[0,t[}) \ot B(\Fock_{[t,\infty[}) = \Alg \ot B(\Fock)$. Then
(formally, for now) we see that
$(\wh{\jmath}_t)_{t \ges 0}$ is a $\sigma$-cocycle if and only if
\begin{equation}\label{Fock coc}
j_{s+t} = \wh{\jmath}_s \circ \sigma_s \circ j_t \quad \text{ for all
} s,t \ges 0.
\end{equation}
The identity~\eqref{Fock coc} was taken as the \ti{definition} of
 Markovian cocycle in the Fock space setting in~\cite{GCHQ}, and
the subsequent papers \cite{centro}, \cite{aunt},
\cite{version1}. Such a family $(j_t)_{t \ges 0}$ is also a quantum
stochastic process in the sense of \cite{afl}, and the family
$(\mathbb{E}_0 \circ j_t)_{t \ges 0}$, where $\mathbb{E}_0$ is the map
$A \mapsto (\id \ot \omega)(A)$ for the vector state $\omega: T
\mapsto \ip{\vp(0)}{T \vp(0)}$, forms a semigroup on $\Alg$, the
(\ti{vacuum}) \ti{expectation semigroup} of $j$.

The purpose of the papers \cite{GCHQ}, \cite{centro} and \cite{aunt}
is to show that Markovian cocycles can be constructed by solving
quantum stochastic differential equations, and that, under certain
regularity assumptions, all such cocycles arise in this manner. Of
course, one must be precise about the nature of the tensor products
and the definition of the extension $\wh{\jmath}_t$. In \cite{GCHQ}
and \cite{centro} this is achieved by assuming that the algebra $\Alg$
is a von Neumann algebra and that the $j_t$ are unital, injective,
normal ${}^*$-homomorphisms. These ideas were then extended to the
$C^*$-algebraic setting in \cite{aunt} in a way that allows
considerable freedom for the kind of tensor product used, and permits
the study of cocycles for which no assumption about the properties of
each $j_t$ is made --- indeed cocycles for which the families
$(j_t(a))_{t \ges 0}$ consist of unbounded operators naturally arise,
and these could be handled.

The main distinction between \cite{GCHQ} and \cite{aunt} on the one
hand, and \cite{centro} on the other, is the choice of regularity
condition on the expectation semigroup of the cocycle; in the first
two it is assumed to be norm-continuous, which leads to a
\emph{stochastic generator} all of whose components (with respect to
any basis for $\noise$)
  are bounded operators on $\Alg$. In
\cite{centro} the natural assumption of pointwise ultraweak
continuity of the semigroup is made, along with the further
assumption that the domain of the generator contains a
${}^*$-subalgebra as a core. There are examples of interest for
which the second assumption does not hold (\cite{domainCP},
\cite{FrancoDomain}).

For each cocycle $j$ there is a family of associated semigroups
$\{(\mapassocsemigp^{x,y}_t)_{t \ges 0}: x,y \in \kil\}$ on $\Alg$,
which may individually be viewed as expectation semigroups of
perturbations of the cocycle. In turn, these determine the cocycle
through its `semigroup decomposition' (\cite{gran}, \cite{aunt}).
 By a theorem of Parthasarathy-Sunder and Skeide
(see Section~\ref{section qsp}) the cocycle is actually determined
by a small number of associated semigroups---$(1+d)^2$ in case $\dim
\kil = d$ (see Proposition~\ref{J}). Working with completely
positive unital cocycles on a full algebra $\Alg = B(\init)$ and
with one dimension of noise ($\noise = \Comp$), Accardi and Kozyrev
exploit the fact that the four semigroups
$\{(\mapassocsemigp^{x,y}_t)_{t \ges 0}: x,y \in \{0,1\} \subset
\Comp\}$ determine a single semigroup acting componentwise on
$\Mat_2 (\Alg)$, with Schur-action:
\begin{equation}
\label{ext semigp} \mapassocsemigp^\eta_t: [a_{xy}] \mapsto
[\mapassocsemigp^{x,y}_t (a_{xy})],
\end{equation}
which is completely positive and satisfies a certain normalisation
condition arising from unitality of the cocycle (\cite{version1}).
More significantly they show that, from any such semigroup on
$\Mat_2 (\Alg)$, a completely positive unital cocycle on $B(\Alg)$
may be constructed whose associated semigroups include the component
semigroups $(\mapassocsemigp^{x,y})$. This provides a new method for
constructing cocycles circumventing problems with the direct
approach via quantum stochastic calculus which is hampered by
inherent domain constraints on the coefficients of the quantum
stochastic differential equation when these coefficients are
unbounded. Indeed, in the only previous result in this direction
(\cite{unbddgen}), this is met by the severe assumption that a
common domain is left invariant by all coefficients of the QSDE. The
significant advance here is that there is no longer need for the
domains of the components of the stochastic generators to directly
match up, let alone to intersect in a common core. In fact, as we
make clear in \cite{QS+CB2}, for the infinitesimal analysis of QS
cocycles, attention may fruitfully be shifted to the generator of
the \emph{global semigroup}~\eqref{ext semigp}. Potential
applications
 of this viewpoint in the theory of classical SDEs are explored
in~\cite{classprob}.

In this paper we use ideas from the theory of operator spaces to
make a thorough analysis of quantum stochastic cocycles,
establishing and exploiting correspondences between classes of
cocycle of interest and corresponding classes of global semigroups
on associated
 \emph{matrix spaces}
(see Section~\ref{section matrix spaces}).
 Matrix spaces are a hybrid form of operator space
 Fubini product in the sense of Tomiyama
 (\cite{Tom},~\cite{opspbook}) which splice together norm and
 ultraweak topologies (\cite{son},~\cite{LiT}).

 For an operator space $\ov$
in $B(\init_1; \init_2)$, we analyse~\eqref{Fock coc}, for a family
of linear maps $(j_t: \ov \To B(\init_1 \ot \Fock; \init_2 \ot
\Fock))_{t \ges 0}$ on which our \ti{only assumptions} are that they
be completely bounded and map into the $\Fock$-matrix space over
$\ov$. Both assumptions are natural; the first covers all cases of
interest, including that of QS flows on an operator algebra, and QS
contraction cocycles on a Hilbert space; the second is a precise
formulation of the cocycle being ``on $\ov$''. This matrix space
viewpoint, achieved by use of a hybrid topology created through
allowing differing topologies on each tensor factor, permits a
unified treatment of cocycles on a wide variety of structures, all
the time respecting the measure-theoretic nature of the noise. Cases
covered include cocycles on operator spaces, $C^*$/$W^*$-algebras,
operator systems, Hilbert $C^*$-modules, coalgebras, quantum
(semi)groups and Hilbert spaces. The latter class, of bounded
\emph{operator cocycles}, consists of adapted families $X = (X_t)_{t
\ges 0}$ in $B(\init \ot \Fock)$ satisfying~\eqref{gen op coc} for
$\theta = \sigma$; they are in one-to-one correspondence with the
completely bounded cocycles on the column operator space $B(\Comp;
\init)$.

 Matrix spaces also enable the study of cocycles
with arbitrary noise dimension space $\noise$. The global
semigroup~\eqref{ext semigp} is then defined on an
$l^2(\Til)$-matrix space over $\ov$, where $\Til$ might be an
orthonormal basis for $\noise$ augmented by the zero of $\noise$.
These semigroups are incorporated into our approach to quantum
stochastics
([$\text{LW}\!_{2\text{-}4}$]) %preferred for arXiv version
 by being realised as vacuum expectation semigroups of
cocycles (on the corresponding matrix space) obtained by perturbing
matrix-space liftings of the original cocycle
(Proposition~\ref{Wcp}). Crucially this process maintains complete
positivity when the cocycle is on an operator system or
$C^*$-algebra, and encodes contractivity and unitality in
recoverable ways. Accardi and Kozyrev have outlined an extension of
their theory to multidimensional noise in \cite{version1a}.

In the last section (Section~\ref{CCcocs}) we prove a number of
characterisation and reconstruction results extending the one given in
\cite{version1}. We provide results for the following cases:
completely positive contraction cocycles on operator systems and
$C^*$-algebras; completely positive unital cocycles on operator
systems; completely contractive cocycles on operator spaces;
contraction operator cocycles, and positive contraction operator
cocycles, on Hilbert spaces. Liebscher has obtained an alternative
characterisation of uniformly bounded operator quantum stochastic
cocycles (\cite{1particle}).

In this paper we have focused on structural questions for cocycles.
In a sister paper (\cite{QS+CB2}) we analyse the \emph{generation}
 of quantum
stochastic cocycles from the perspective of their global semigroups,
and relate these to characterisations by means of quantum stochastic
differential equations. In particular all issues pertaining to
continuity of the map $t \mapsto j_t$ are deferred to that paper.
The global semigroup viewpoint developed here has already been used
to obtain new results on (operator) quantum stochastic differential
equations (\cite{egs}). However, the essential point, as emphasised
by Accardi and Kozyrev, is that this viewpoint allows one to
overcome inherent limitations of quantum stochastic calculus. This
has been carried out for
 \emph{holomorphic} contraction operator cocycles in \cite{LiS}. In that
paper an infinitesimal characterisation of these cocycles is given,
which manifestly exceeds the scope of quantum stochastic
differential equations.

Finally we mention that families of Schur-action semigroups of
completely positive maps of the type arising in this paper also
appear in the context of product systems of Hilbert modules where
they play a natural role in the analysis of ``Type I'' such systems
(\cite{BBLS}).

\bigskip\noindent
\emph{General notational conventions.} Given a vector-valued
function $f: S \To V$ defined on a set $S$, and subset $A$ of $S$,
we write $f_A$ for the function $\ind_A f: S \to V$ where $\ind_A$
denotes the indicator function of~$A$. All linear spaces are
complex, and Hilbert space inner products are linear in their second
argument and thereby consistent with the standard convention for
Hilbert $C^*$-modules. The $n$-fold orthogonal sum of a Hilbert
space $\hil$ is denoted $\hil^n$. We use the notation $B(X;Y)$ for
the normed space of bounded operators between normed spaces $X$ and
$Y$. For an index set $I$ and a vector space $V$, $\Mat_I (V)$
denotes the linear space of matrices $[a^i_j]_{i,j \in I}$ with
entries from $V$, and $\Mat_n (V)$ the linear space of $n \times n$
matrices with entries from $V$, so that $\Mat_n (V) \cong
\Mat_{\{1,\ldots,n\}} (V) \cong V \otul \Mat_n (\Comp)$, where $
\otul$ denotes the linear tensor product. For a map $\phi: V \To W$
its matrix lifting $\Mat_n(V) \To \Mat_n(W)$, $[a^i_j] \mapsto
[\phi(a^i_j)]$ is denoted $\phi^{(n)}$. The symbol $\ot$ is used for
the tensor product of vectors, Hilbert spaces and Hilbert space
operators; we also use it for spatial/injective tensor products of
operator spaces and completely bounded maps; $\vot$ is used for
ultraweak tensor products of ultraweakly closed operator spaces and
ultraweakly continuous completely bounded maps between such spaces.
 Finally $\de$ is reserved for the Kronecker
symbol, thus $(\de^i)_{i \in I}$ denotes the standard orthonormal
basis of $l^2(I)$.

\section{Operator spaces}
\label{opspaces}

In this section we establish some notation and terminology, introduce
some useful topologies and recall some results from operator space
theory (\cite{Bl+LM}, \cite{opspbook}, \cite{Paulsen}, \cite{Pisier}).
We freely use the abbreviations CB, CP, CC and CI for completely
bounded, completely positive, completely contractive and completely
isometric, and CPC for completely positive and contractive.

\subsection*{Dirac and $E$-notations}

For a Hilbert space $\hil$, we denote $B(\Comp; \hil)$ and $B(\hil;
\Comp)$ by $\ket{\hil}$ and $\bra{\hil}$ respectively, and for $e \in
\hil$ we write $\ket{e} \in \ket{\hil}$ and $\bra{e} \in \bra{\hil}$
for the operators defined by
\[
\ket{e} \la = \la e \ \text{ and } \ \bra{e} f = \ip{e}{f}.
\]
Thus $\dyad{d}{e}$ is the rank one operator in $B(\hil)$ given by $f
\mapsto \ip{e}{f} d$. We usually abbreviate $I_\Hil \ot \ket{e}$ and
$\ket{e} \ot I_\Hil$ to $E_e$, and write $E^e$ for $(E_e)^*$, allowing
context to reveal both the Hilbert space $\Hil$ and the order of the
tensor components.

\subsection*{Matrices of operators}

For Hilbert spaces $\Hil$, $\Kil$ and $\hil$, a choice of orthonormal
basis $\ka = (e_i)_{i \in I}$ for $\hil$ entails isometric
isomorphisms $\Hil \ot \hil \To \bop_{i \in I} \Hil$ and $\Kil \ot
\hil \To \bop_{i \in I} \Kil$ given by $\xi \mapsto {}^\ka \xi :=
(E^{(i)} \xi)_{i \in I}$, and a linear injection
\begin{equation}\label{op to matrix}
B(\Hil \ot \hil; \Kil \ot \hil) \To \Mat_I (B(\Hil; \Kil)), \quad T
\mapsto {}^\ka T:= [E^{(i)} T E_{(j)}]_{i,j \in I}
\end{equation}
where $E_{(j)} := I_\Hil \ot \ket{e_j}: \Hil \To \Hil \ot \hil$ and $E^{(i)} :=
(E_{(i)})^*: \Kil \ot \hil \To \Kil$. We denote by $\Mat_I (B(\Hil;
\Kil))_\bd$ the image of the map~\eqref{op to matrix}; the resulting
matrices act as operators from $\bop_{i \in I} \Hil = \Hil \ot l^2(I)$
to $\bop_{i \in I} \Kil = \Kil \ot l^2(I)$ as matrices should: if $A =
[a^i_j]_{i,j \in I}$ then
\[
Av = \bigl( \ts{\sum_{j \in I} a^i_j v^j} \bigr)_{i \in I}.
\]
Thus the subspace $\Mat_I (B(\Hil; \Kil))_\bd$ is identified with
$B(\bop_{i \in I} \Hil; \bop_{i \in I} \Kil) = B(\Hil \ot l^2(I); \Kil
\ot l^2(I))$, cf.\ the familiar identification
\begin{equation}\label{MnBHK}
\Mat_n (B(\Hil; \Kil)) = B(\Hil^n; \Kil^n) = B(\Hil \ot \Comp^n; \Kil
\ot \Comp^n).
\end{equation}

The following characterisation of nonnegative operators on an
orthogonal direct sum will be useful (\cite{FoF}, \cite{nephews}):
\begin{multline}\label{PosMats}
B(\Hil \op \Kil)_+ = \biggl\{ \begin{bmatrix} A & B \\ B^* &
D \end{bmatrix} : A \in B(\Hil)_+, D \in B(\Kil)_+ \text{ and } B =
A^{1/2} R D^{1/2} \\
\smash{\ \text{ for some contraction } R \in B(\Kil; \Hil) \biggr\}}.
\end{multline}

\subsection*{$\hil$-matrix topologies}
 %\label{Hybrid}

The study of quantum stochastic cocycles on $C^*$-algebras raises
the question of how the topology of the algebra should be spliced with
the measure theoretic noise. This leads naturally to the consideration
of two hybrid topologies which we describe next. The
$\hil$-\ti{ultraweak} (respectively, $\hil$-\ti{weak operator})
topology on $B(\Hil \ot \hil; \Kil \ot\hil)$ is the locally convex
topology defined by the seminorms
\[
p^\om: T \mapsto \norm{\Om (T)}, \: \omega \in B(\hil)_*
\quad
(\text{resp.\ } p_{e,d}: T \mapsto \norm{E^e T E_d}, \: d,e \in \hil),
\]
where $\Om$ is the slice map
\[
\id_{B(\Hil; \Kil)} \vot \: \om: B(\Hil \ot \hil; \Kil \ot \hil) \To
B(\Hil; \Kil)
\]
(\cite{KaR}). Note that $p_{e,d} = p^\om$ for the vector functional
$\om = \om_{e,d}: T \mapsto \ip{e}{Td}$. Thus, in the partial ordering
of topologies on $B(\Hil \ot \hil; \Kil \ot \hil)$,
\begin{center}
$\hil$-W.O.T. \, $\les$ \, $\hil$-ultraweak topology \, $\les$ \, norm
topology.
\end{center}
The $\hil$-ultraweak (respectively $\hil$-weak operator) topology
coincides with the ultraweak (resp.\ weak operator) topology if
$B(\Hil; \Kil)$ is finite dimensional, and coincides with the norm
topology if $\hil$ is finite dimensional. On bounded sets the
$\hil$-ultraweak and $\hil$-w.o. topologies coincide and are, in
general, finer than the ultraweak topology. If $\ov$ and $\ow$ are
subspaces of $B(\Hil; \Kil)$ and $B(\hil)$ respectively, with $\ov$
norm closed, then
\[
\ol{\ov \aot \ow}^{\hil\text{-uw}} \supset \ov \aot
\ol{\ow}^\text{uw} \ \text{ and } \ \ol{\ov \aot
\ow}^{\hil\text{-w.o.}} \supset \ov \aot \ol{\ow}^\text{w.o.}.
\]
Since for each $T \in B(\Hil \ot \hil; \Kil \ot \hil)$ the bounded net
\[
\{(I_\Kil \ot P_F) T (I_\Hil \ot P_F): F \text{ subspace of } \hil,
\dim F < \infty\}
\]
converges $\hil$-weakly to $T$, $B(\Hil; \Kil) \aot B(\hil)$ is
$\hil$-ultraweakly dense in $B(\Hil \ot \hil; \Kil \ot \hil)$.

\subsection*{Operator spaces}

In this paper operator spaces will take concrete form, as closed
subspaces of the space of bounded operators between two Hilbert
spaces, with one class of exceptions. We refer to $\ov$ being an
\ti{operator space in $B(\hil; \kil)$}; $\Mat_n(\ov)$ is thus normed
by being identified with a closed subspace of $B(\hil^n; \kil^n)$
through the identifications~\eqref{MnBHK}. The exception is that, for
concrete operator spaces $\ov$ and $\ow$, the Banach space of
completely bounded operators $CB(\ov; \ow)$ is endowed with matrix
norms making it an operator space via the linear isomorphisms
\begin{equation}\label{CB op space}
\Mat_n (CB(\ov; \ow)) = CB(V; \Mat_n(\ow))
\end{equation}
An operator space $\ov$ in $B(\hil; \kil)$ has a concrete \ti{adjoint
operator space} in $B(\kil; \hil)$:
\[
\ov^\dagger := \{T^*: T \in \ov\}, \quad T \mapsto T^* \
(\text{Hilbert space operator adjoint}).
\]
For concrete operator spaces $\ov$ and $\ow$ and map $\phi: \ov \To
\ow$,
\[
\phi^\dagger:\ov^\dagger \To \ow^\dagger, \quad T^* \mapsto \phi(T)^*,
\]
defines a map which is a completely bounded, respectively completely
isometric, operator if $\phi$ is, moreover $(CB(\ov^\dagger; \ow^\dagger), \phi
\mapsto \phi^\dagger)$ is the adjoint operator space of $CB(\ov; \ow)$
 (see \cite{Bl+LM}). When $\ov^\dagger = \ov$ and $\ow^\dagger = \ow$, we
call a map $\phi: \ov \to \ow$ \emph{real} if it is
adjoint-preserving,that is if it satisfies $\phi^\dagger =
\phi$.

\subsection*{Operator systems}

Recall that a (concrete) \ti{operator system} is a closed subspace
$\ov$ of $B(\hil)$, for some Hilbert space $\hil$, which is closed
under taking adjoints and contains $I_\hil$. Thus an operator system
is linearly generated by its nonnegative elements, and each $\Mat_n
(\ov)$ is itself an operator system in $B(\hil^n)$. Operator systems
also have an abstract characterisation involving the order structure
on the sequence $\{\Mat_n (\ov): n \ges 1\}$, due to Choi and Effros
(\cite{coho}, Theorem~1.2.7).

The following summarises and extends parts of Propositions~2.1 and~3.6
of~\cite{Paulsen}.

\begin{propn}\label{A}
Let $\phi: \ov \To \ow$ be a positive linear map between operator
systems. Then $\phi$ is bounded, real and satisfies $\norm{\phi} \les
2 \norm{\phi(I)}$. Moreover if $\phi$ is $2$-positive then
$\norm{\phi} = \norm{\phi (I)}$, and if $\phi$ is completely positive
then it is completely bounded with $\cbnorm{\phi} = \norm{\phi(I)}$.
\end{propn}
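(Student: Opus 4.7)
The plan is to follow the standard textbook approach (as in Paulsen, Chapter 2), using the positive $2\times 2$ matrix characterisation~\eqref{PosMats} at the crucial steps.

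First I would establish reality. Given a self-adjoint $a\in\ov$, the elements $\|a\|I\pm a$ are nonnegative in $\ov$; applying $\phi$ gives nonnegative, hence self-adjoint, elements in $\ow$, so $\phi(a)=\phi(a)^*$. Extending by linearity and decomposing an arbitrary element of $\ov$ into its real and imaginary parts yields $\phi^\dagger=\phi$. Next, for the bound $\norm{\phi}\les 2\norm{\phi(I)}$, consider a self-adjoint $a\in\ov$ with $\norm{a}\les 1$. Then $-I\les a\les I$, and applying $\phi$ (which preserves the order) gives $-\phi(I)\les\phi(a)\les\phi(I)$, whence $\norm{\phi(a)}\les\norm{\phi(I)}$. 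For general $a$, write $a = \re(a) + i\im(a)$ with each part self-adjoint of norm at most $\norm{a}$, so $\norm{\phi(a)}\les 2\norm{\phi(I)}$.

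For the $2$-positive case, let $a\in\ov$ with $\norm{a}\les 1$. By~\eqref{PosMats} applied with $A=D=I$ and $R=a$, the matrix
\[
\begin{bmatrix} I & a \\ a^* & I \end{bmatrix}
\]
is nonnegative in $\Mat_2(\ov)$. Applying $\phi^{(2)}$, which is positive by hypothesis, yields
\[
\begin{bmatrix} \phi(I) & \phi(a) \\ \phi(a)^* & \phi(I) \end{bmatrix} \ges 0
\]
in $\Mat_2(\ow)$. Invoking~\eqref{PosMats} in the other direction, $\phi(a)=\phi(I)^{1/2}R\,\phi(I)^{1/2}$ for some contraction $R$, so $\norm{\phi(a)}\les\norm{\phi(I)}$. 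The reverse inequality $\norm{\phi}\ges\norm{\phi(I)}$ is obvious, giving equality.

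For the completely positive case, observe that $\phi^{(n)}:\Mat_n(\ov)\to\Mat_n(\ow)$ is itself completely positive, hence $2$-positive, between operator systems; moreover the unit of $\Mat_n(\ov)$ is the diagonal matrix $\diag(I,\ldots,I)$, whose image under $\phi^{(n)}$ is the block diagonal operator with $\phi(I)$ on the diagonal, of norm $\norm{\phi(I)}$. Applying the $2$-positive case to $\phi^{(n)}$ gives $\norm{\phi^{(n)}}=\norm{\phi(I)}$ for every $n$, and taking the supremum over $n$ yields $\cbnorm{\phi}=\norm{\phi(I)}$. The only mild obstacle is the bookkeeping in identifying the unit of $\Mat_n(\ov)\subset B(\hil^n)$ and its image under $\phi^{(n)}$, but this is immediate from the identifications~\eqref{MnBHK}; the substantive content sits in the $2$-positive step, which is itself a clean application of the two directions of~\eqref{PosMats}.
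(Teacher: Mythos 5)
Your argument is correct and is exactly the standard one from Paulsen's book (Propositions 2.1 and 3.6), which is all the paper itself offers here: it states the result with a citation and no proof, apart from noting that~\eqref{PosMats} underlies the $2\times 2$ matrix step just as you use it. No gaps — the reality of $\phi$ is rightly established first so that the off-diagonal entry of the image matrix is $\phi(a)^*=\phi(a^*)$, and the passage from the $2$-positive case to $\phi^{(n)}$ is sound.
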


\begin{rem}
In contrast to the situation with $C^*$-algebras, an example of
Arveson (\cite{Paulsen}, Example~2.2) shows that for operator systems
the factor of~$2$ cannot be removed. Note that a completely positive
contraction is completely contractive.
\end{rem}

The following construction, known as Paulsen's $2 \times 2$ matrix
trick, provides a route by which operator system results can be
applied to operator spaces. To each operator space $\ov$ in $B(\hil;
\hil')$ is associated the operator system
\begin{equation}\label{tilde space}
\wt{\ov} = \left\{ \begin{bmatrix} \al I_{\hil'} & a \\ b & \be
I_\hil
\end{bmatrix}: \al, \be \in \Comp, a \in \ov, b\in\ov^\dagger \right\}
\end{equation}
in $B(\hil' \op \hil)$, and each linear map $\phi: \ov \To \ow$,
into an operator space in $B(\Kil; \Kil')$, gives rise to a linear,
real, unital map
\begin{equation*}\label{tilde map}
\wt{\phi}: \wt{\ov} \To \wt{\ow}, \ \
\begin{bmatrix} \al I_{\hil'} & a \\ b & \be
I_\hil \end{bmatrix} \mapsto \begin{bmatrix} \al I_{\Kil'} & \phi(a)
\\ \phi^\dagger (b) & \be I_\Kil \end{bmatrix}.
\end{equation*}

\begin{cor}[\cite{Paulsen}, Lemma~8.1]\label{B}
Let $\phi: \ov \To \ow$ be a linear map between operator spaces. Then
$\phi$ is completely contractive if and only if $\wt{\phi}$ is
completely positive.
\end{cor}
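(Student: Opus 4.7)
My plan is to use the canonical shuffle isomorphism to realise $\Mat_n(\wt{\ov})$ inside $B((\hil')^n \op \hil^n)$ as the space of $2 \times 2$ block operators of the form
\[
W = \begin{bmatrix} \al \ot I_{\hil'} & X \\ Y & \be \ot I_{\hil} \end{bmatrix}, \quad \al, \be \in \Mat_n(\Comp),\ X \in \Mat_n(\ov),\ Y \in \Mat_n(\ov^\dagger),
\]
under which $\wt{\phi}^{(n)}(W)$ has diagonal blocks $\al \ot I_{\Kil'}$, $\be \ot I_\Kil$ and off-diagonal blocks $\phi^{(n)}(X)$ and $(\phi^\dagger)^{(n)}(Y)$. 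Combined with~\eqref{PosMats}, positivity of $W$ is then equivalent to $\al, \be \ges 0$, $Y = X^*$, and the existence of a contraction $R$ with $X = (\al \ot I_{\hil'})^{1/2} R (\be \ot I_\hil)^{1/2}$.

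For the forward direction I assume $\phi$ is completely contractive and take a positive $W$ as above. I first replace $\al$ and $\be$ by $\al + \ve I_n$ and $\be + \ve I_n$, preserving positivity of $W$; the general case follows on letting $\ve \downarrow 0$ and using closedness of the positive cone. With $\al, \be$ invertible, the operator $R := (\al^{-1/2} \ot I_{\hil'}) X (\be^{-1/2} \ot I_\hil)$ is a contraction, and since left/right multiplication by scalar-matrix amplifications preserves $\Mat_n(\ov)$ by linearity of the vector-space structure on $\ov$, in fact $R \in \Mat_n(\ov)$. Complete contractivity yields $\|\phi^{(n)}(R)\| \les 1$, and because scalar-matrix amplifications commute with $\phi^{(n)}$ by linearity of $\phi$, the off-diagonal block of $\wt{\phi}^{(n)}(W)$ equals $(\al^{1/2} \ot I_{\Kil'}) \phi^{(n)}(R) (\be^{1/2} \ot I_\Kil)$; a second invocation of~\eqref{PosMats} delivers positivity.

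The reverse direction is almost immediate. Given a contraction $X \in \Mat_n(\ov)$, applying~\eqref{PosMats} with $\al = \be = I_n$ and $R = X$ shows that
\[
\begin{bmatrix} I_{(\hil')^n} & X \\ X^* & I_{\hil^n} \end{bmatrix}
\]
is a positive element of $\Mat_n(\wt{\ov})$; complete positivity of $\wt{\phi}$ then forces the image, with identity diagonal blocks and $(1,2)$-entry $\phi^{(n)}(X)$, to be positive, which via~\eqref{PosMats} reads off $\|\phi^{(n)}(X)\| \les 1$.

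The one delicate step is the $\ve$-perturbation in the forward direction, which handles the fact that the factorisation encoded in~\eqref{PosMats} is only transparent when $\al$ and $\be$ are invertible; everything else reduces to pure linearity bookkeeping for how scalar-matrix amplifications interact with the matrix lifting~$\phi^{(n)}$.
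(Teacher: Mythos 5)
Your argument is correct and follows essentially the same route as the paper: both rest on the canonical shuffle identifying $\Mat_n(\wt{\ov})$ with $2\times 2$ block matrices of the form~\eqref{a} and on the characterisation~\eqref{PosMats} of positive block operators. The only difference is one of bookkeeping --- the paper reduces $\wt{\phi}^{(n)}$ to the $n=1$ equivalence applied to $\phi^{(n)}$ and merely asserts positivity of the resulting extension to matrix-scalar diagonal blocks, whereas you verify that extension step directly via the $\ve$-perturbation and the factorisation $X=(\al\ot I)^{1/2}R(\be\ot I)^{1/2}$, which is exactly the detail the paper leaves implicit.
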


\begin{rem}
Since $\wt{\phi}$ is unital by construction, Proposition~\ref{A}
implies that it is necessarily completely contractive if it is CP.
\end{rem}

The proof is instructive; it starts by noting that the equivalence
of contractivity of $\phi$ and positivity of $\wt{\phi}$ follows
from the characterisation~\eqref{PosMats}; it proceeds by
identifying $(\hil' \op \hil)^n$ with $(\hil')^{n} \op \hil^n$, so
that $\Mat_n(\wt{\ov})$ consists of elements
\begin{equation}\label{a}
\begin{bmatrix} I_{\hil'} \ot \la & A \\ B & I_\hil \ot
\mu \end{bmatrix}
\end{equation}
in which $\la, \mu \in \Mat_n (\Comp)$, $A \in \Mat_n (\ov)$ and $B
\in \Mat_n (\ov^\dagger) = \Mat_n(\ov)^\dagger$; and concludes by
noting that $\wt{\phi}^{(n)}$ corresponds to the extension of
$\wt{\phi^{(n)}}$ which takes~\eqref{a} to
\[
\begin{bmatrix} I_{\Kil'} \ot \la & \phi^{(n)} (A) \\
\phi^{(n)\dagger}(B) & I_\Kil \ot \mu \end{bmatrix},
\]
and that this map is positive if $\wt{\phi^{(n)}}$ is.

\subsection*{Nonunital $C^*$-algebras}

The prototypical example of an operator system is a unital
$C^*$-algebra, acting nondegenerately on a Hilbert space $\hil$.
Unfortunately the operator system axioms/definition exclude
nonunital $C^*$-algebras (for extensions of the Choi-Effros
characterisation in this direction see~\cite{matord}). The following
result, which is standard (e.g.\ \cite{coho}, Theorem~1.2.1), is
collected here for convenience.

\begin{propn}\label{Ua}
Let $\Cil$ be a nonunital $C^*$-algebra acting nondegenerately on a
Hilbert space $\hil$, and let $1$ denote the identity of $B(\hil)$,
then the unital $C^*$-algebra
\begin{equation*}\label{uA}
\uCil := C^*(\Cil \cup \{1\}) = \{a + z1: a \in \Cil, z \in \Comp\}
\end{equation*}
contains $\Cil$ as a maximal ideal. If $\phi: \Cil \To B(\Kil)$ is a
linear completely positive map, for some Hilbert space $\Kil$, then
$\norm{\phi} = \sup \{\norm{\phi(a)}: a \in \Cil_{+,1}\}$ and for any
$C \ges \norm{\phi}$,
\[
\psi: \uCil \To B(\Kil), \quad a + z1 \mapsto \phi (a) + zC I_\Kil
\]
defines a CP extension of $\phi$ satisfying $\norm{\psi} = C$.
\end{propn}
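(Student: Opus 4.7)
My plan is to address the three claims in order, with the Stinespring dilation of CP maps serving as the principal tool for (ii) and (iii).

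For (i), since $\Cil$ is nonunital one has $1 \notin \Cil$, so $\Cil + \Comp 1$ is an algebraic direct sum and a $*$-subalgebra of $B(\hil)$. Norm-closedness reduces to the estimate $|z| \les \norm{a + z 1}$: for $z \ne 0$ write $a + z 1 = z(1 - c)$ with $c = -a/z \in \Cil$; then $c$ is not invertible in $B(\hil)$ (else for any approximate unit $(e_\la)$ of $\Cil$ one would have $e_\la = c^{-1}(ce_\la) \to c^{-1}c = 1$ in norm, forcing $1 \in \Cil$), so $0 \in \sigma_{B(\hil)}(c)$ and hence $\norm{1 - c} \ges r(1-c) \ges 1$. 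This estimate shows that both coordinate projections are bounded, so $\Cil + \Comp 1$ is norm closed and equals $\uCil$, and the $*$-character $\eta: a + z 1 \mapsto z$ exhibits $\Cil$ as a maximal (closed) ideal.

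For the norm identity in (ii), I would invoke Stinespring to obtain a nondegenerate $*$-representation $\pi: \Cil \to B(\Kil')$ and an operator $V \in B(\Kil; \Kil')$ with $\phi(a) = V^* \pi(a) V$; nondegeneracy comes for free in the standard approximate-unit construction. One direction is the trivial bound $\norm{\phi(a)} \les \norm{V}^2 \norm{a}$. For the other, if $(e_\la)$ is an approximate unit of $\Cil$ then $\pi(e_\la) \to I_{\Kil'}$ in the strong operator topology by nondegeneracy, so $\phi(e_\la) = V^* \pi(e_\la) V \to V^* V$ weakly from below; since each $e_\la \in \Cil_{+,1}$ the chain
\[
\norm{V}^2 = \norm{V^* V} \les \sup_\la \norm{\phi(e_\la)} \les \sup\{\norm{\phi(a)}: a \in \Cil_{+,1}\} \les \norm{\phi} \les \norm{V}^2
\]
forces equality throughout.

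For (iii), extend $\pi$ to a unital $*$-representation $\tilde\pi$ of $\uCil$ by setting $\tilde\pi(a + z 1) := \pi(a) + z I_{\Kil'}$ (the $*$-homomorphism property is routine), and note that $\eta$ from (i) is a state on $\uCil$. Since $C \ges \norm{\phi} = \norm{V}^2$ by hypothesis, $D := C I_\Kil - V^* V \ges 0$, and the decomposition
\[
\psi(x) = V^* \tilde\pi(x) V + \eta(x)\, D
\]
exhibits $\psi$ as a sum of two manifestly CP maps; the identity $\psi(a + z 1) = \phi(a) + z C I_\Kil$ follows by direct computation. Finally, $\uCil$ is a unital $C^*$-algebra (hence an operator system) and $\psi(I_\hil) = C I_\Kil$, so Proposition~\ref{A} yields $\norm{\psi} = C$. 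The main technical point throughout is arranging the Stinespring dilation to be nondegenerate---essential both for the strong-operator convergence $\pi(e_\la) \to I_{\Kil'}$ in (ii) and for the unital extension $\tilde\pi$ in (iii)---but this is automatic in the standard approximate-unit construction.
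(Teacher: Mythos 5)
Your argument is correct. There is nothing in the paper to compare it against: the authors explicitly present Proposition~\ref{Ua} as a standard fact, citing \cite{coho}, Theorem~1.2.1, and give no proof, so any complete argument is by definition ``a different route''. Your treatment of (i) is the usual one and is fine (the spectral estimate $\norm{a+z1}\ges|z|$ via non-invertibility of $-a/z$, then the character $a+z1\mapsto z$). For (ii) and (iii) the Stinespring route works, and your decomposition $\psi = V^*\tilde\pi(\cdot)V + \eta(\cdot)(CI_\Kil - V^*V)$ is a clean way to exhibit complete positivity; the norm computation then correctly falls out of Proposition~\ref{A} applied to the unital $C^*$-algebra $\uCil$. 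The one point you should make explicit, because it is where circularity could creep in: the most common textbook construction of a Stinespring dilation for a CP map on a \emph{nonunital} $C^*$-algebra proceeds by first extending $\phi$ to the unitisation --- which is exactly claim (iii). So you must construct $(\pi,V)$ directly on $\Cil$ (e.g.\ define $V$ on the completion of $\Cil\odot\Kil$ via the Cauchy--Schwarz estimate $\phi(a)^*\phi(a)\les\norm{\phi}\,\phi(a^*a)$, or via the approximate-unit limit you allude to), and this in turn presupposes that $\phi$ is bounded, which for positive maps on a (possibly nonunital) $C^*$-algebra is automatic but is a separate standard lemma, not a consequence of Proposition~\ref{A} as stated (that proposition concerns operator systems, hence unital objects). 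With that one sentence added --- positive linear maps on $C^*$-algebras are automatically bounded, and the nondegenerate dilation is built directly on $\Cil$ rather than on $\uCil$ --- your proof is complete and noncircular.
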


\subsection*{From operators to CB maps}

The next result provides the basic mechanism whereby operator cocycles
will be viewed as completely bounded mapping cocycles.

\begin{propn}
\label{basic mechanism}
For Hilbert spaces $\hil$, $\kil$, $\Hil$ and $\Kil$, the identity
\begin{equation}\label{between}
\phi (\ket{u}) = X (\ket{u} \ot I_\Hil) \quad u \in \hil,
\end{equation}
establishes a completely isometric isomorphism between the spaces $B
(\hil \ot \Hil; \Kil)$ and $CB (\ket{\hil}; B (\Hil;
\Kil))$. Similarly, $\psi (\bra{u}) = (\bra{u} \ot I_\Hil) Y$
establishes a completely isometric isomorphism between $B (\Hil; \kil
\ot \Kil)$ and $CB (\bra{\kil}; B (\Hil; \Kil))$.
\end{propn}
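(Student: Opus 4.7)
The plan is to produce a linear isometric bijection $\Phi: X \mapsto \phi_X$ between $B(\hil \ot \Hil; \Kil)$ and $CB(\ket{\hil}; B(\Hil; \Kil))$, then upgrade to complete isometry by ampliation, and deduce the second claim by adjoint duality. First I would show $\cbnorm{\phi_X} \les \norm{X}$: given $T = [\ket{u_{ij}}] \in \Mat_n(\ket{\hil})$, regard $T$ as the operator in $B(\Comp^n; \hil \ot \Comp^n)$ that it represents, via $\Mat_n(\ket{\hil}) = B(\Comp^n; \hil^n)$. A short calculation yields the identity
\[
\phi_X^{(n)}(T) h = (X \ot I_{\Comp^n})(T \ot I_\Hil) h, \quad h \in \Hil^n,
\]
under the natural identification $\hil \ot \Hil^n = \hil \ot \Hil \ot \Comp^n$; taking norms gives $\norm{\phi_X^{(n)}(T)} \les \norm{X}\norm{T}$.

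Next, for surjectivity and the reverse bound, I would construct $\Phi^{-1}$. Given $\phi \in CB(\ket{\hil}; B(\Hil; \Kil))$, define $X_0$ on the algebraic tensor product $\hil \otul \Hil$ by $X_0(u \ot h) = \phi(\ket{u}) h$ and extend linearly, using the universal property of $\otul$. To bound $X_0$, take $\xi \in \hil \otul \Hil$ and use the Schmidt decomposition to write $\xi = \sum_{j=1}^n e_j \ot k_j$ with $(e_j)_{j=1}^n$ orthonormal in $\hil$, so that $\norm{\xi}^2 = \sum_j \norm{k_j}^2$. Form the matrix $T \in \Mat_n(\ket{\hil})$ whose first row is $(\ket{e_1}, \ldots, \ket{e_n})$ and whose other rows vanish: since $(e_j)$ is orthonormal, $T$ is an isometry $\Comp^n \to \hil^n$, so $\norm{T} = 1$. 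The first coordinate of $\phi^{(n)}(T)(k_1, \ldots, k_n) \in \Kil^n$ is precisely $X_0 \xi$, so
\[
\norm{X_0 \xi} \les \cbnorm{\phi} \norm{T} \norm{k} = \cbnorm{\phi} \norm{\xi},
\]
with $k := (k_1, \ldots, k_n) \in \Hil^n$. By density $X_0$ extends uniquely to $X \in B(\hil \ot \Hil; \Kil)$ with $\norm{X} \les \cbnorm{\phi}$, and $\Phi(X) = \phi$ by construction.

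Combining these steps yields an isometric linear bijection. To upgrade to a complete isometry, observe that under the identifications $\Mat_n(B(\hil \ot \Hil; \Kil)) = B(\hil \ot \Hil^n; \Kil^n)$ and $\Mat_n(CB(\ket{\hil}; B(\Hil; \Kil))) = CB(\ket{\hil}; B(\Hil^n; \Kil^n))$, the latter from~\eqref{CB op space}, a direct check shows that $\Phi^{(n)}$ is precisely the base-case bijection applied with $(\Hil, \Kil)$ replaced by $(\Hil^n, \Kil^n)$, hence is isometric. The second assertion follows by adjoint duality: the Hilbert space adjoint $Y \mapsto Y^*$ is a conjugate-linear complete isometry between $B(\Hil; \kil \ot \Kil)$ and $B(\kil \ot \Kil; \Hil)$; the first part applied to $Y^*$ yields $\phi_{Y^*} \in CB(\ket{\kil}; B(\Kil; \Hil))$, and passing to operator-space adjoints (as reviewed before Proposition~\ref{A}) sends this completely isometrically to $\psi \in CB(\bra{\kil}; B(\Hil; \Kil))$, with $\psi(\bra{u}) = \phi_{Y^*}(\ket{u})^* = (\bra{u} \ot I_\Kil) Y$.

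The main obstacle is the reverse bound in the surjectivity step: the pointwise estimate $\norm{\phi(\ket{u})} \les \norm{\phi}\norm{u}$ is too weak to control $\norm{X}$ on non-elementary tensors, so one genuinely needs the matrix-norm (complete boundedness) hypothesis. The Schmidt-decomposition device is what packages an arbitrary algebraic tensor as a contractive input to $\phi^{(n)}$; everything else is routine tensor-product bookkeeping.
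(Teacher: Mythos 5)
Your proposal is correct and follows essentially the same route as the paper's proof: the forward bound via the ampliation identity $\phi_X^{(n)}(T) = (X \ot I_{\Comp^n})(T \ot I_\Hil)$, the inverse via an orthonormal decomposition of an algebraic tensor fed into $\phi^{(n)}$ through a one-row matrix of norm one, the upgrade to complete isometry by recognising $\Phi^{(n)}$ as the base case with $(\Hil,\Kil)$ replaced by $(\Hil^n,\Kil^n)$, and the second assertion via the operator-space adjoint $(\Phi_{\kil,\Kil;\Hil})^\dagger$. The only cosmetic difference is that the paper packages the forward direction as the composition of a complete isometry $\ket{u}\mapsto\ket{u}\ot I_\Hil$ with a left-multiplication operator, which you instead verify by direct computation.
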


\begin{proof}
Set $\ov = B(\hil \ot \Hil; \Kil)$ and $\ow = CB (\ket{\hil}; B(\Hil;
\Kil))$.

Let $\mathcal{T}: \ket{\hil} \To B(\Hil; \hil \ot \Hil)$ denote the
ampliation $\ket{u} \mapsto \ket{u} \ot I_\Hil$, and for $X \in \ov$
let $L_X$ denote the corresponding left multiplication operator
$B(\Hil; \hil \ot \Hil) \To B(\Hil; \Kil)$. Then $\mathcal{T}$ is CI,
and $L_X$ is CB with $\cbnorm{L_X} = \norm{X}$. Thus $X \mapsto \phi_X
:= L_X \circ \mathcal{T}$ defines a contraction $\Phi = \Phi_{\hil,
\Hil; \Kil}: \ov \To \ow$, with $\phi_X$ satisfying~\eqref{between}.

For $\phi \in \ow$ let $X^0_\phi: \hil \aot \Hil \To \Kil$ be the
linearisation of the bilinear map $(u, f) \mapsto \phi (\ket{u}) f$.
Then, for $\xi \in \hil \aot \Hil$, expressing $\xi$ in the form
$\sum_{i=1}^n u_i \ot f^i$ where $u_1, \ldots ,u_n$ are mutually
orthogonal unit vectors in $\hil$,
\[
\norm{X^0_\phi \xi} = \norm{\phi^{(n)} (T) \bff} \ \text{ where } T=
\begin{bmatrix} \ket{u_1} & \cdots & \ket{u_n} \\ 0 & \cdots & 0 \\
\vdots & \ddots & \\ 0 & & 0 \end{bmatrix} \text{ and } \bff =
\begin{pmatrix} f^1 \\ \vdots \\ f^n \end{pmatrix}.
\]
Since $T \in B (\Comp^n; \hil^n)$ has norm one and $\norm{\bff} =
\norm{\xi}$, $X^0_\phi$ is bounded with norm at most $\cbnorm{\phi}$;
let $X_\phi \in B(\hil \ot \Hil; \Kil)$ be its continuous
extension. Then $X_\phi$ satisfies~\eqref{between} and $\phi \mapsto
X_\phi$ defines a contraction $\Psi: \ow \To \ov$.

Clearly $\Phi$ and $\Psi$ are mutually inverse, thus they are
isometric too, and so are Banach space isometric isomorphisms.

Now let $X = [X^i_j] \in \Mat_n (B(\hil \ot \Hil; \Kil)) = B (\hil \ot
\Hil^n; \Kil^n)$. Then, under the identification $\Mat_n
(CB(\ket{\hil}; B(\Hil; \Kil))) = CB(\ket{\hil}; B(\Hil^n; \Kil^n))$
(see~\eqref{CB op space})
\[
[X^i_j (\ket{u} \ot I_\Hil)] = X (\ket{u} \ot I_{\Hil^n}) = \phi_X
(\ket{u}),
\]
and so $(\Phi_{\hil, \Hil; \Kil})^{(n)} = \Phi_{\hil, \Hil^n;
\Kil^n}$. It follows that $\Phi$ is a CI isomorphism.

The second isomorphism is implemented by $(\Phi_{\kil, \Kil;
\Hil})^\dagger$.
\end{proof}

The next result details some traffic in the above correspondence.

\begin{cor}
 \label{map v. op inj}
For Hilbert spaces $\hil$, $\Hil$ and $\Kil$, let
 $X\in B(\hil \ot \Hil; \Kil)$ and
 $\phi\in CB(\ket{\hil}; B(\Hil; \Kil))$
 correspond according to~\eqref{between}.
\begin{alist}
\item
 If $X$ is injective then $\phi$ is injective.
\item\label{IsoImpCI}
 If $X$ is isometric then $\phi$ is completely isometric.
\end{alist}
\noindent
The converses hold when $\Hil = \Comp$.
\end{cor}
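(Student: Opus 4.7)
The plan is to verify each claim directly from the defining relation~\eqref{between}.

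For part (a), argue contrapositively. Assuming $\Hil \ne \{0\}$, suppose $\phi(\ket{u}) = 0$; then $X(u \ot f) = 0$ for every $f \in \Hil$. Choosing any nonzero $f$, injectivity of $X$ forces $u \ot f = 0$, hence $u = 0$ and $\ket{u} = 0$.

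For part (b), the main step is to recognise $\phi^{(n)}$ as a composition of two operators. Writing $T = [\ket{u_{ij}}] \in \Mat_n(\ket{\hil})$ and identifying this space with $B(\Comp^n; \hil^n)$, and $\Mat_n(B(\Hil; \Kil))$ with $B(\Hil^n; \Kil^n)$, a direct computation gives
\[
\phi^{(n)}(T) = X^{\op n} \circ (T \ot I_\Hil),
\]
where $T \ot I_\Hil \in B(\Comp^n \ot \Hil; \hil^n \ot \Hil)$ and $X^{\op n}: (\hil \ot \Hil)^n \To \Kil^n$ is the block-diagonal ampliation of $X$, via the canonical identifications $\Hil^n = \Comp^n \ot \Hil$ and $(\hil \ot \Hil)^n = \hil^n \ot \Hil$. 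Since $X$ isometric yields $X^{\op n}$ isometric, and $\norm{T \ot I_\Hil} = \norm{T}$, we get $\norm{\phi^{(n)}(T)} = \norm{T}$ for every $n$ and $T$, so $\phi$ is CI.

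For the converses, specialise to $\Hil = \Comp$: relation~\eqref{between} reduces to $\phi(\ket{u}) = \ket{Xu}$ under the identifications $B(\hil \ot \Comp; \Kil) = B(\hil; \Kil)$ and $B(\Comp; \Kil) = \ket{\Kil}$. For (a), $\phi(\ket{u}) = 0 \Iff Xu = 0$, so $\phi$ injective forces $X$ injective. For (b), complete isometry of $\phi$ implies, in particular, isometry, and the identities $\norm{\phi(\ket{u})} = \norm{Xu}$ and $\norm{\ket{u}} = \norm{u}$ then give $\norm{Xu} = \norm{u}$, so $X$ is isometric.

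The main (modest) obstacle is simply book-keeping the tensor and direct-sum identifications in the composition formula for $\phi^{(n)}(T)$; once that is in place, the norm computations are immediate.
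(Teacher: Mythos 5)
Your proposal is correct and follows essentially the same route as the paper: your factorisation $\phi^{(n)}(T) = X^{\oplus n}\circ(T \ot I_\Hil)$ is precisely the paper's displayed identity $\phi^{(n)}(T) = (I_{\Comp^n}\ot X)(T\ot I_\Hil)$ under the stated identifications, and the converses are handled identically via $\phi(\ket{u}) = \ket{Xu}$ when $\Hil=\Comp$. The only difference is that you spell out the $n=1$ injectivity argument and flag the degenerate case $\Hil=\{0\}$, which the paper leaves implicit.
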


\begin{proof}
Identifying $\Lil^n$ with $\Comp^n \ot \Lil$ for the Hilbert spaces
$\Lil =\Hil,\Kil$, (a) and (b) follow from the identity
\[
\phi^{(n)} (T) = (I_{\Comp^n} \ot X) (T \ot I_\Hil), \quad T \in
\Mat_n(\ket{\hil}), n\in\Nat.
\]
When $\Hil = \Comp$, $B(\hil; \Kil) \cong CB(\ket{\hil}; \ket{\Kil})$,
with $\phi(\ket{u}) = \ket{Xu}$, from which the converses follow
immediately.
\end{proof}

\begin{eg}
Let $\Kil = \hil \ot \Hil'$ and $X = I_\hil \ot R$, where $R \in
B(\Hil; \Hil')$ is noninjective and of norm one, for some Hilbert
space $\Hil'$. Then $X$ is noninjective but, for each $n\in\Nat$,
\[
\phi_X^{(n)} (T) = T \ot
R, \quad T \in \Mat_n (\ket{\hil}),
\]
 so $\phi_X$ is completely
isometric.
\end{eg}

\section{Matrix spaces}
\label{section matrix spaces}

In this section we describe an abstract matrix construction over a (concrete)
operator space, with Hilbert space as index set, that was introduced
in~\cite{son}, and we develop some of its properties. The matrix
spaces considered here are a coordinate-free version of the spaces of
infinite matrices over an operator space introduced by Effros and Ruan
(see~\cite{opspbook}, Chapter~10). Maps between spaces of matrices
that are comprised of a matrix of linear maps, each one mapping
between corresponding components of the matrices (i.e.\ having
\emph{Schur action}), are then characterised.

\begin{lemma}\label{slice}
For an operator space $\ov$ in $B(\Hil; \Kil)$ and total subsets
$\Til$ and $\Til'$ of a Hilbert space $\hil$, we have
\begin{multline}
 \label{h matrix}
\{T \in B(\Hil \ot \hil; \Kil \ot \hil): E^x T E_y \in \ov \text{ for
all } x \in \Til', y \in \Til\} \\
= \{T \in B(\Hil \ot \hil; \Kil \ot \hil): (\id_{B(\Hil; \Kil)} \vot
\: \om) (T) \in \ov \text{ for all } \om \in B(\hil)_*\}.
\end{multline}
\end{lemma}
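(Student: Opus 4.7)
\medskip

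\noindent
\textbf{Proof plan for Lemma~\ref{slice}.} The strategy is to prove the two inclusions separately, using the identity
\[
(\id_{B(\Hil;\Kil)} \vot \: \om_{x,y})(T) = E^x T E_y, \quad x,y \in \hil,
\]
where $\om_{x,y}$ is the vector functional $S \mapsto \ip{x}{Sy}$, together with the fact that every normal functional on $B(\hil)$ admits a trace-norm convergent expansion as a series of vector functionals.

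The inclusion $\supseteq$ is immediate: if $T$ belongs to the right-hand side, then applying the hypothesis to the vector functionals $\om_{x,y}$ with $x \in \Til'$ and $y \in \Til$ gives $E^x T E_y = (\id \vot \, \om_{x,y})(T) \in \ov$.

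For $\subseteq$, I would proceed in two stages. First, suppose $T$ belongs to the left-hand side. The map $(x,y) \mapsto E^x T E_y$ is sesquilinear (conjugate-linear in $x$, linear in $y$) and jointly norm-continuous since $\norm{E^x T E_y} \les \norm{T} \norm{x} \norm{y}$. By linearity, $E^x T E_y \in \ov$ whenever $x \in \Lin \Til'$ and $y \in \Lin \Til$; by the totality of $\Til'$ and $\Til$ together with joint continuity and the norm-closedness of $\ov$, the conclusion $E^x T E_y \in \ov$ extends to all $x,y \in \hil$.

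Second, given an arbitrary $\om \in B(\hil)_*$, represent it via a trace-class SVD as $\om = \sum_{n \ges 1} \om_{x_n, y_n}$, where $\sum_n \norm{x_n}^2 < \infty$ and $\sum_n \norm{y_n}^2 < \infty$, with convergence in the predual norm $\norm{\cdot}_*$. Since the slice map $\id_{B(\Hil; \Kil)} \vot \: \cdot$ is contractive from $B(\hil)_*$ into $B(B(\Hil \ot \hil; \Kil \ot \hil); B(\Hil; \Kil))$, the partial sums
\[
S_N := \sum_{n=1}^N E^{x_n} T E_{y_n} = \bigl(\id \vot \ts{\sum_{n=1}^N \om_{x_n, y_n}}\bigr)(T)
\]
converge in norm to $(\id \vot \: \om)(T)$. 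Each $S_N$ lies in $\ov$ by the previous stage, and so $(\id \vot \: \om)(T) \in \ov$ by norm-closedness.

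The only nonroutine point will be the second stage, where one must be careful to invoke the correct form of the trace-class representation and the (complete) contractivity of the assignment $\om \mapsto \id \vot \: \om$ in the predual norm, in order to secure genuine norm convergence of the partial sums rather than a weaker mode of convergence.
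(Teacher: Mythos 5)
Your proof is correct and takes essentially the same route as the paper's: the paper compresses your two stages into the single observation that $\{\om_{x,y}: x \in \Til', y \in \Til\}$ is total in $B(\hil)_*$ while the set of $\om \in B(\hil)_*$ with $(\id_{B(\Hil;\Kil)} \vot \: \om)(T) \in \ov$ is a norm-closed subspace. Your unpacking of that totality claim --- sesquilinearity and joint norm continuity of $(x,y) \mapsto E^x T E_y$ to pass from $\Til' \times \Til$ to $\hil \times \hil$, then the trace-norm convergent expansion of a general normal functional combined with contractivity of the slice map --- is exactly the content the paper leaves implicit.
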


\begin{proof}
 This follows from the following facts. For
 $T\in B(\Hil \ot \hil; \Kil \ot \hil)$,
 $E^x T E_y = (\id_{B(\Hil; \Kil)} \vot \: \om_{x,y}) (T)$, the set
 $\{\om \in B(\hil)_*: (\id_{B(\Hil; \Kil)} \vot \: \om) (T)
\in \ov\}$ is a norm closed subspace of $B(\hil)_*$, and the set
$\{\om_{x,y}: x \in \Til', y \in \Til\}$ is total in $B(\hil)_*$.
\end{proof}

The (\ti{right}) \ti{$\hil$-matrix space over $\ov$} is the operator
space~\eqref{h matrix}; we denote it $\ov \otm
B(\hil)$. [Previous notation, used in~\cite{son} and elsewhere: $\Mat
(\hil; \ov)_\bd$.]

\begin{rems}
Rectangular matrix spaces $\ov \otm B(\hil; \hil')$ and left matrix
spaces $B(\hil) \mot \ov$ are defined in the obvious way. If $\ka =
(e_i)_{i \in I}$ is an orthonormal basis for $\hil$ then, in the
notation~\eqref{op to matrix}, we have the identification
\begin{equation}\label{AbsToMat}
T \in \ov \otm B(\hil) \longleftrightarrow {}^\ka T \in \Mat_I (V)_\bd
\end{equation}
where $\Mat_I (\ov)_\bd$ is defined to be the operator space $\Mat_I
(B(\Hil; \Kil))_\bd \cap \Mat_I (\ov)$. Note that $\Mat_I (\ov)_\bd$
has a description valid for abstract matrix spaces $\ov$, namely
\[
\{A \in \Mat_I (\ov): \sup_{\La \subset\subset I} \norm{A^{[\La]}} <
\infty\}
\]
where $A^{[\La]} \in \Mat_\La(\ov)$ denotes the (finite) submatrix of
$A$ obtained by cut-off (\cite{opspbook}, Chapter 10). Significantly,
$\ov \otm B(\hil)$ does too, namely $CB(B(\hil)_*;\ov)$ (\cite{LiT}).
\end{rems}

The following is Lemma~1.1 of~\cite{son}, adapted for square-matrix
spaces:

\begin{lemma}\label{C}
Let $\ov$ be an operator space in $B(\Hil; \Kil)$, and $\hil$ and
$\kil$ any pair of Hilbert spaces. The natural associativity map
$B(\Hil \ot \hil; \Kil \ot \hil) \vot B(\kil) \To B(\Hil; \Kil) \vot
B(\hil \ot \kil)$ restricts to a completely isometric isomorphism
\[
(\ov \otm B(\hil)) \otm B(\kil) = \ov \otm B(\hil \ot \kil).
\]
\end{lemma}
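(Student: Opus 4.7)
The plan is to combine the slice characterisation of matrix spaces (Lemma~\ref{slice}) with a density argument for product functionals in the predual of $B(\hil \ot \kil)$, and then observe that ambient complete isometry descends to the subspaces.

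First, I would identify the two ambient spaces via the natural associativity isomorphism. Regarding $\Hil \ot \hil \ot \kil$ and $\Kil \ot \hil \ot \kil$ as associative Hilbert space tensor products, one obtains a completely isometric isomorphism
\[
\alpha: B(\Hil \ot \hil; \Kil \ot \hil) \vot B(\kil) \To B(\Hil; \Kil) \vot B(\hil \ot \kil)
\]
arising from rebracketing. Since matrix norms on both $\ov \otm B(\hil \ot \kil)$ and $(\ov \otm B(\hil)) \otm B(\kil)$ are inherited from the respective ambient spaces, it suffices to show that $\alpha$ carries one subspace onto the other as a set; complete isometry is then automatic.

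Next, I would characterise each subspace purely by slice maps, using Lemma~\ref{slice}. For $T$ in the ambient space, membership in $\ov \otm B(\hil \ot \kil)$ is equivalent to $(\id_{B(\Hil;\Kil)} \vot \theta)(T) \in \ov$ for every $\theta \in B(\hil \ot \kil)_*$. On the other hand, iterating the slice characterisation, $T \in (\ov \otm B(\hil)) \otm B(\kil)$ if and only if $(\id \vot \omega')(T) \in \ov \otm B(\hil)$ for every $\omega' \in B(\kil)_*$, if and only if
\[
(\id_{B(\Hil;\Kil)} \vot \om \vot \om')(T) \in \ov
\quad \text{for all } \om \in B(\hil)_*, \, \om' \in B(\kil)_*.
\]
Under the associativity $\alpha$, the double slice $(\id \vot \om \vot \om')$ agrees with $(\id \vot (\om \ot \om'))$, where $\om \ot \om'$ denotes the product functional in $B(\hil \ot \kil)_*$. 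Hence $\alpha$ restricts to an inclusion of $(\ov \otm B(\hil)) \otm B(\kil)$ into $\ov \otm B(\hil \ot \kil)$.

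The remaining, and arguably main, point is the reverse inclusion: that slicing by all product functionals $\om \ot \om'$ already forces $T$ into $\ov \otm B(\hil \ot \kil)$. For fixed $T$ in the ambient space, the map $\theta \mapsto (\id_{B(\Hil;\Kil)} \vot \theta)(T)$ is a bounded linear map $B(\hil \ot \kil)_* \To B(\Hil; \Kil)$ of norm at most $\norm{T}$, so the set of $\theta \in B(\hil \ot \kil)_*$ whose slice lies in the norm-closed subspace $\ov$ is itself norm-closed. It thus suffices to check that the linear span of the product functionals $\{\om \ot \om' : \om \in B(\hil)_*, \om' \in B(\kil)_*\}$ is norm-dense in $B(\hil \ot \kil)_*$. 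This is the standard fact that $B(\hil \ot \kil)_*$, identified with the trace class on $\hil \ot \kil$, equals the projective tensor product of the trace class preduals; equivalently, vector functionals $\om_{x,y}$ with $x, y$ elementary tensors are total, and finite sums of such are norm-dense. Combining the three steps yields the desired equality of matrix spaces, with complete isometry inherited from $\alpha$.
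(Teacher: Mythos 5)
Your proof is correct. Note that the paper offers no proof of Lemma~\ref{C} to compare against --- it simply cites Lemma~1.1 of~\cite{son} --- but your argument is exactly in the spirit of the paper's proof of Lemma~\ref{slice}: reduce membership to a slice condition, observe that the set of admissible functionals is norm-closed (because $\ov$ is norm-closed and $\theta \mapsto (\id \vot \theta)(T)$ is bounded), and exhibit a total family. Two remarks. First, your bookkeeping of the inclusions is muddled: the sentence ``Hence $\alpha$ restricts to an inclusion of $(\ov \otm B(\hil)) \otm B(\kil)$ into $\ov \otm B(\hil \ot \kil)$'' is not yet justified where it appears, since at that point you only know that the \emph{product-functional} slices of $\alpha(T)$ lie in $\ov$; it is precisely the density argument of your next paragraph that completes this inclusion. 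Meanwhile the genuinely easy containment, $\ov \otm B(\hil \ot \kil) \subset (\ov \otm B(\hil)) \otm B(\kil)$ (all slices in $\ov$ trivially implies product slices in $\ov$), is never stated explicitly --- though the equivalence you establish does deliver both directions, so this is a presentational slip rather than a gap. Second, you could bypass the predual-density step altogether by invoking the total-subset form of Lemma~\ref{slice}: take $\Til = \Til'$ to be the set of elementary tensors in $\hil \ot \kil$, which is total there; then $E_{x' \ot y'} = E_{y'} E_{x'}$ and $E^{x \ot y} = E^{x} E^{y}$ (with the ampliations read off from context), so the single-slice condition $E^{x \ot y} T E_{x' \ot y'} \in \ov$ becomes the iterated condition $E^{x}\bigl(E^{y} T E_{y'}\bigr)E_{x'} \in \ov$, and two applications of Lemma~\ref{slice} give the set equality directly. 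Your closing observation that complete isometry is then automatic is sound, since both matrix spaces carry the operator space structure inherited from ambient spaces that are identified by the (spatial, hence completely isometric) associativity isomorphism.
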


\subsection*{Diagonal matrix spaces}

Let $\ov$ be an operator space and $\hil$ a Hilbert space with
orthonormal basis $\ka = (e_i)_{i \in I}$. We define the operator
space
\[
\ov \otm \Diag_\ka (\hil) := \{T \in \ov \otm B(\hil): E^{(i)} T
E_{(j)} = 0 \text{ for } i \neq j\}
\]
(with $E^{(i)}$ and $E_{(j)}$ as in~\eqref{op to matrix}) and refer to
it as the \ti{$\ka$-diagonal $\hil$-matrix space over $\ov$}. Clearly
the completely isometric isomorphism~\eqref{AbsToMat} restricts to a
completely isometric isomorphism
\[
\ov \otm \Diag_\ka (\hil) \To \Diag_I (\ov)_\bd
\]
where the \ti{diagonal operator subspace of} $\Mat_I
(\ov)_\bd$ is defined in the obvious way:
\[
\Diag_I (\ov)_\bd := \{[a^i_j] \in \Mat_I (\ov)_\bd: a^i_j = 0 \text{
for } i \neq j\}.
\]

\subsection*{Matrix space liftings}

A feature of the matrix space construction that was exploited
in~\cite{son} is that completely bounded maps between operator spaces
induce completely bounded maps between corresponding matrix
spaces. This is detailed in the next result, whose proof follows the
same lines as that of Lemma 1.2 of~\cite{son}. The result shows in
particular that, as operator spaces, matrix spaces do not depend on
the concrete representation of the underlying operator space
(as already remarked above). In
brief, any completely isometric isomorphism between (concrete)
operator spaces $\ov$ and $\ow$ induces a completely isometric
isomorphism between $\ov \otm B(\hil)$ and $\ow \otm B(\hil)$.

\begin{lemma}\label{D}
For a completely bounded operator between operator spaces $\phi: \ov
\To \ow$, and a Hilbert space $\hil$, there is a unique map $\phi \otm
\id_{B(\hil)}: \ov \otm B(\hil) \To \ow \otm B(\hil)$ satisfying
\[
E^e (\phi \otm \id_{B(\hil)}) (T) E_d = \phi (E^e T E_d)
\quad
\text{ for all } d,e \in \hil, T \in \ov \otm B(\hil);
\]
it is linear and completely bounded, moreover \tu{(}unless $\hil =
\{0\}$\tu{)} it satisfies $\cbnorm{\phi \otm \id_{B(\hil)}} =
\cbnorm{\phi} = \norm{\phi \otm \id_{B(l^2)}}$.
\end{lemma}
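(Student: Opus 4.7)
The plan is to establish uniqueness, then construct the lifting by finite matrix approximation, then bootstrap from norm bound to CB-norm bound via Lemma~\ref{C}.

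\textbf{Uniqueness and the construction.} For uniqueness, observe that the defining property determines every slice $\bigl(\id_{B(\Hil;\Kil)}\vot\om_{e,d}\bigr)(\phi\otm\id_{B(\hil)})(T) = \phi(E^eTE_d)$, and slices against the rank-one functionals $\om_{e,d}$ separate elements of $B(\Hil\ot\hil;\Kil\ot\hil)$ (since they span a norm-dense subspace of $B(\hil)_*$). For existence, fix an orthonormal basis $\ka=(e_i)_{i\in I}$ of $\hil$ and, for $T\in\ov\otm B(\hil)$, form the matrix ${}^\ka T = [T^i_j]\in\Mat_I(\ov)_\bd$ as in~\eqref{AbsToMat}. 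For any finite $F\subset I$ the compression $[T^i_j]_{i,j\in F}$ has norm at most $\norm{T}$, so complete boundedness of $\phi$ yields
\[
\bigl\|[\phi(T^i_j)]_{i,j\in F}\bigr\| = \bigl\|\phi^{(|F|)}\bigl([T^i_j]_{i,j\in F}\bigr)\bigr\| \les \cbnorm{\phi}\norm{T}.
\]
By the description of $\Mat_I(\ow)_\bd$ as matrices with uniformly bounded finite submatrices, the matrix $[\phi(T^i_j)]$ defines an element $S(T)\in\ow\otm B(\hil)$ with $\norm{S(T)}\les\cbnorm{\phi}\norm{T}$. Setting $\phi\otm\id_{B(\hil)}:=S$, the defining identity holds for $d=e_j$, $e=e_i$ by construction, and extends to all $d,e\in\hil$ by sesquilinearity and continuity of slicing.

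\textbf{CB-norm upper bound via associativity.} To upgrade the norm bound to a CB-norm bound, apply Lemma~\ref{C}: matrix amplification on $\ov\otm B(\hil)$ is, up to completely isometric isomorphism, the passage from $\hil$ to $\hil\ot\Comp^n$. Verifying on matrix-unit slices that $\bigl(\phi\otm\id_{B(\hil)}\bigr)^{(n)}$ satisfies the defining identity for $\phi\otm\id_{B(\hil\ot\Comp^n)}$ — and invoking uniqueness — identifies them. The first paragraph's norm bound, applied with $\hil$ replaced by $\hil\ot\Comp^n$, then gives $\bigl\|(\phi\otm\id_{B(\hil)})^{(n)}\bigr\|\les\cbnorm{\phi}$ for every $n$, hence $\cbnorm{\phi\otm\id_{B(\hil)}}\les\cbnorm{\phi}$.

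\textbf{Matching lower bounds.} Assuming $\hil\neq\{0\}$, pick a unit vector $e\in\hil$. Then $a\mapsto a\ot\dyad{e}{e}$ is a completely isometric embedding $\iota:\ov\hookrightarrow\ov\otm B(\hil)$ (its slices lie in $\ov$, and $\iota^{(n)}$ is isometric because $\norm{A\ot\dyad{e}{e}}=\norm{A}$ for any $A\in\Mat_n(\ov)$). By construction $\bigl(\phi\otm\id_{B(\hil)}\bigr)\circ\iota_\ov = \iota_\ow\circ\phi$, so $\cbnorm{\phi}\les\cbnorm{\phi\otm\id_{B(\hil)}}$, giving equality. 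For the $l^2$ clause, take $\ka$ the canonical basis: the corner embeddings $\Mat_n(\ov)\hookrightarrow\Mat_I(\ov)_\bd=\ov\otm B(l^2)$ are isometric and $\phi\otm\id_{B(l^2)}$ restricted to the $n\times n$ corner coincides with $\phi^{(n)}$; hence $\norm{\phi\otm\id_{B(l^2)}}\ges\sup_n\norm{\phi^{(n)}}=\cbnorm{\phi}$, and combining with the already-proven upper bound $\norm{\phi\otm\id_{B(l^2)}}\les\cbnorm{\phi\otm\id_{B(l^2)}}\les\cbnorm{\phi}$ yields the second equality.

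\textbf{Anticipated obstacle.} The main technical point is the first step: verifying that the entrywise action $[T^i_j]\mapsto[\phi(T^i_j)]$ actually lands in $\Mat_I(\ow)_\bd$ and is independent of the chosen basis. The uniform bound on finite compressions, coming from $\cbnorm{\phi}<\infty$, is exactly what makes the characterisation of $\Mat_I(\ow)_\bd$ applicable, and basis independence is then automatic from the uniqueness clause. After that, the bootstrap to the CB norm via Lemma~\ref{C} and the two-sided comparison via corner embeddings are essentially formal.
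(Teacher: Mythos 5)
Your proof is correct and takes essentially the intended route: the paper itself omits the argument, deferring to Lemma~1.2 of~\cite{son}, and your construction via entrywise action on ${}^\ka T$ with uniform control of finite submatrices, followed by the bootstrap through Lemma~\ref{C} and the diagonal/corner embeddings for the reverse inequalities, is precisely that argument with the details filled in. No gaps.
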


Clearly these maps are the coordinate-free counterparts to the
sequence of induced maps $(\phi^{(n)})_{n \ges 1}$. Indeed $\phi
\otm \id_{B(\hil)}$ is often abbreviated to $\phi^{(\hil)}$.

\begin{rem}
If the operator space $\ow$ is of the form $\ou \otm B(\Hil)$ then we
write
\begin{equation}\label{flipped lift}
\phi^\hil \ \text{ for } \ \Sigma \circ \phi^{(\hil)}: \ov \otm
B(\hil) \To \ou \otm B(\hil \ot \Hil),
\end{equation}
$\Sigma$ being the tensor flip $\ou \otm B(\Hil \ot \hil) \To \ou \otm
B(\hil \ot \Hil)$.
\end{rem}

We next address topological questions concerning matrix spaces and
induced maps between matrix spaces.
 For $\om \in B(\hil)_*$, we have
 \[
(\id_{B(\Hil; \Kil)} \vot \om)(\ov\otm B(\hil)) \subset \ov;
\]
 the induced operator $\ov\otm B(\hil) \to \ov$ is denoted
 $\id_\ov \otm\ \om$.
Let $\phi \in CB(\ov; \ow)$ for operator spaces $\ov$ and $\ow$.
 The collection of functionals $\om \in B(\hil)_*$
satisfying
\begin{equation}\label{normalstates}
(\id_\ow \otm \ \om) \circ (\phi \otm \id_{B(\hil)}) = \phi \circ
(\id_\ov \otm \ \om)
\end{equation}
is norm-closed and contains the norm-total family $\{\om_{e,d}: d,e
\in \hil\}$, and so~\eqref{normalstates} holds for all $\om \in
B(\hil)_*$. Accordingly we denote the resulting map $\ov \otm B(\hil)
\To \ow$ by $\phi \otm \om$.

\begin{rem}
 There is a sense in which a version of identity~\eqref{normalstates}
 holding for all $\phi$ actually characterises the normality of
 $\omega$. Precise sufficient conditions are given in~\cite{Tom},
 Theorem 5.1; see also~\cite{Neu}, Theorem 5.4.

 \end{rem}

 The following result is now evident.

\begin{lemma}\label{h topology}
Let $\ov$ and $\ow$ be operator spaces.
\begin{alist}
\item
$\ol{\ov \aot B(\hil)}^{\hil\mathrm{-w.o.}} = \ov \otm B(\hil) = \ol{\ov
\aot B(\hil)}^{\hil\mathrm{-uw}}.$
\item
 For a completely bounded operator $\phi: \ov \To \ow$, $\phi
\otm \id_{B(\hil)}$ is both $\hil$-ultraweakly continuous and
$\hil$-weak operator continuous.
\end{alist}
\end{lemma}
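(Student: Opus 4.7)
\medskip
\noindent
\textbf{Proof proposal.}
The plan is to reduce both statements to the defining intertwining property of $\phi \otm \id_{B(\hil)}$ recorded in the displayed identity labelled \eqref{normalstates}, together with the previously noted fact that $B(\Hil;\Kil) \aot B(\hil)$ is $\hil$-ultraweakly dense in $B(\Hil\ot\hil; \Kil\ot\hil)$.

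For part (a), first note the inclusion $\ov \aot B(\hil) \subset \ov \otm B(\hil)$, which is immediate from the characterisation \eqref{h matrix} of the matrix space by linearity in the tensor component and the fact that $E^x (v \ot b) E_y = \ip{x}{by} v \in \ov$. Next, both the $\hil$-w.o.\ and $\hil$-ultraweak topologies are defined by seminorms of the form $T \mapsto \norm{\Omega(T)}$ with $\Omega$ taking the form $\id_{B(\Hil;\Kil)} \vot\: \om$ (using vector functionals $\om = \om_{x,y}$ for the $\hil$-w.o.\ topology). Since $\ov$ is norm-closed and each such $\Omega$ carries $\ov \otm B(\hil)$ into $\ov$, it follows that $\ov \otm B(\hil)$ is closed in both topologies. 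It remains to prove density of $\ov \aot B(\hil)$. Given $T \in \ov \otm B(\hil)$ and the bounded net indexed by the finite-dimensional subspaces $F \subset \hil$ ordered by inclusion, put $T_F := (I_\Kil \ot P_F) T (I_\Hil \ot P_F)$. Expanding $P_F$ in any orthonormal basis of $F$ shows that
\[
T_F = \sum_{i,j} (E^{e_i} T E_{e_j}) \ot \dyad{e_i}{e_j} \in \ov \aot B(\hil).
\]

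The identity $E^x T_F E_y = E^{P_F x} T E_{P_F y}$ together with the norm convergences $E_{P_F y} \To E_y$ and $E^{P_F x} \To E^x$ and the boundedness of $T$ gives norm convergence $E^x T_F E_y \To E^x T E_y$, hence $T_F \To T$ $\hil$-weakly. Since the net is bounded and the two hybrid topologies coincide on bounded sets (as noted earlier in the section), the same net converges $\hil$-ultraweakly to $T$, establishing density in both topologies and completing part~(a).

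For part (b), $\hil$-w.o.\ continuity of $\phi \otm \id_{B(\hil)}$ is immediate from the defining property in Lemma~\ref{D}: for $x,y \in \hil$,
\[
E^x (\phi \otm \id_{B(\hil)})(T) E_y = \phi(E^x T E_y),
\]
so norm convergence of the slices $E^x T_\al E_y$ is transported to norm convergence of the image slices by boundedness of $\phi$. For $\hil$-ultraweak continuity, apply the slice identity \eqref{normalstates}: for any $\om \in B(\hil)_*$,
\[
(\id_\ow \otm\ \om) \circ (\phi \otm \id_{B(\hil)}) = \phi \circ (\id_\ov \otm\ \om),
\]
and the right-hand side is manifestly $\hil$-ultraweakly-to-norm continuous (as a composition of a slice map, which is $\hil$-uw continuous into $\ov$ by definition of the topology, with the bounded operator $\phi$). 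This forces $\phi \otm \id_{B(\hil)}$ to be $\hil$-ultraweakly continuous, since $\hil$-ultraweak continuity of the target is tested by the seminorms $p^\om \circ (\phi \otm \id_{B(\hil)})$ ranging over $\om \in B(\ow \otm B(\hil))_*$-seminorms of the stated form.

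The only delicate point is the verification of density: one must be careful that the compressions converge in the $\hil$-w.o.\ sense (which is strictly stronger than the weak operator topology when $B(\Hil;\Kil)$ is infinite-dimensional), and this is precisely where norm convergence of the ampliations $E_{P_F y}$, rather than merely weak convergence of the $P_F y$, is used. Everything else reduces to routine topological bookkeeping once the intertwining identity \eqref{normalstates} has been invoked.
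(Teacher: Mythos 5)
Your proposal is correct, and it follows essentially the same route the paper intends: the paper leaves this lemma as ``evident'' from the immediately preceding facts, namely the $\hil$-weak convergence of the bounded compression net $(I\ot P_F)T(I\ot P_F)$, the slice-map characterisation of $\ov\otm B(\hil)$ in Lemma~\ref{slice}, the defining property of $\phi\otm\id_{B(\hil)}$ in Lemma~\ref{D}, and the intertwining identity~\eqref{normalstates} --- exactly the ingredients you assemble. Your write-up simply makes the bookkeeping explicit (norm convergence of $E_{P_Fy}\To E_y$, coincidence of the two hybrid topologies on bounded sets), and all of those details check out.
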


Note the further relations
\[
\ov \sot B(\hil) \subset \ov \otm B(\hil) \subset \ol{\ov}^\text{uw}
\vot B(\hil);
\]
the first inclusion being an equality if either $\ov$ or $\hil$ is
finite dimensional and the second being an equality if and only if
$\ov$ is ultraweakly closed. Thus the $\hil$-matrix space over an
ultraweakly closed operator space is its ultraweak tensor product
with~$B(\hil)$.

\subsection*{Maps with Schur-action}

For operator spaces $\ov$ and $\ow$, index set $I$ and linear map
$\phi: \Mat_I (\ov)_\bd \To \Mat_I (\ow)_\bd$, we can define maps
$\phi^i_j: \ov \To \ow$ by
\begin{equation}\label{cpt maps}
\phi^i_j (a) = E^{(i)} \phi(E_{(i)} a E^{(j)}) E_{(j)}.
\end{equation}
We say that $\phi$ has \ti{Schur-action} if these maps determine
$\phi$, in the sense that $\phi$ acts componentwise through
\[
\phi ([a^i_j]) = [\phi^i_j (a^i_j)].
\]
For example if $\varphi \in CB(\ov; \ow)$ and $\hil = l^2(I)$, then
the map $\varphi \otm \id_{B(\hil)}$ from Lemma~\ref{D}, viewed as a
map $\Mat_I (\ov)_\bd \To \Mat_I (\ow)_\bd$ (through~\eqref{AbsToMat})
has Schur-action: $\varphi^{(\hil)} ([a^i_j]) = [\varphi(a^i_j)]$.

For operator spaces $\ov$ and $\ow$ and Hilbert space $\hil$ with
orthonormal basis $\ka =(e_i)_{i \in I}$, a linear map $\phi: \ov \otm
B(\hil) \To \ow \otm B(\hil)$ will be called $\ka$-\ti{decomposable}
if the induced map ${}^\ka\phi: \Mat_I (\ov)_\bd \To \Mat_I (\ow)_\bd$
given by ${}^\ka\phi ({}^\ka T) := {}^\ka(\phi(T))$ (see~\eqref{op to
matrix}), has Schur-action.

We next establish criteria for a map to have Schur-action. To this end
consider the orthogonal projections
\[
p_k = [\de^i_k \de^k_j I_\Hil] =
E_{(k)}E^{(k)},
 \quad k\in I,
\]
  in $\Mat_I (B(\Hil))_\bd$
 (for any Hilbert space
$\Hil$). The following lemma is easily verified.

\begin{lemma}\label{E}
Let $\phi$ be a linear map $\Mat_I (\ov)_\bd \To \Mat_I (\ow)_\bd$ for
operator spaces $\ov$ and $\ow$, and index set $I$. Then the following
are equivalent\tu{:}
\begin{rlist}
\item
$\phi$ has Schur-action.
\item
$\phi (p_i A p_j) = p_i \phi (A) p_j \ \text{ for all } A \in
\Mat_I (\ov)_\bd, i,j \in I$.
\end{rlist}
\end{lemma}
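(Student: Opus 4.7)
The proof should be essentially a direct computation from the definitions, with the key observation that the projections $p_k$ act as ``coordinate selectors'' on matrices: specifically, $p_i A p_j = E_{(i)} E^{(i)} A E_{(j)} E^{(j)} = E_{(i)} a^i_j E^{(j)}$, which is the matrix in $\Mat_I(\ov)_\bd$ having $a^i_j$ in position $(i,j)$ and zeros elsewhere. Note that $E^{(i)} p_i = E^{(i)}$ and $p_j E_{(j)} = E_{(j)}$, while $E^{(k)} p_i = 0$ when $k\neq i$. These identities are what make the argument go through.

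For the implication \rref{enumi} $\Implies$ \rref{enumi}', I would fix $A = [a^i_j] \in \Mat_I(\ov)_\bd$ and $i,j \in I$, then unpack both sides: from Schur-action, $\phi(p_i A p_j)$ is the matrix whose only potentially nonzero entry is at position $(i,j)$, equal to $\phi^i_j(a^i_j)$ (by~\eqref{cpt maps} applied to $p_iAp_j = E_{(i)}a^i_jE^{(j)}$); likewise, multiplying $\phi(A) = [\phi^i_j(a^i_j)]$ by $p_i$ on the left and $p_j$ on the right yields exactly the same matrix. So the two sides agree.

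For \rref{enumi}' $\Implies$ \rref{enumi}, I would compute the $(i,j)$-entry of $\phi(A)$ for arbitrary $A = [a^i_j]$:
\[
E^{(i)} \phi(A) E_{(j)} = E^{(i)} p_i \phi(A) p_j E_{(j)} = E^{(i)} \phi(p_i A p_j) E_{(j)} = E^{(i)} \phi(E_{(i)} a^i_j E^{(j)}) E_{(j)} = \phi^i_j(a^i_j),
\]
using hypothesis \rref{enumi}' in the second equality and the definition~\eqref{cpt maps} in the last. Since this holds for every $i,j \in I$ and $\phi(A)$ lies in $\Mat_I(\ow)_\bd$, it is determined by its entries and so $\phi(A) = [\phi^i_j(a^i_j)]$, which is the Schur-action property.

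There is no real obstacle: the proof is a matter of manipulating the cutoff projections. The only point requiring a small amount of care is the verification that $p_i A p_j = E_{(i)} a^i_j E^{(j)}$ and the compatible cancellations $E^{(i)} p_i = E^{(i)}$, $p_j E_{(j)} = E_{(j)}$, which follow directly from the orthogonality of the basis $\ka$.
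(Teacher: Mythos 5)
Your proof is correct, and it is exactly the direct verification intended: the paper offers no proof of Lemma~\ref{E} beyond the remark that it is ``easily verified'', and your computation --- identifying $p_iAp_j$ with $E_{(i)}a^i_jE^{(j)}$, using the cancellations $E^{(i)}p_i=E^{(i)}$ and $p_jE_{(j)}=E_{(j)}$, and noting that elements of $\Mat_I(\ow)_\bd$ are determined by their entries --- supplies precisely the omitted details.
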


Specialising to completely positive unital maps between operator
systems, Schur-action has the following useful characterisation.

\begin{propn}\label{F}
Let $\ov$ and $\ow$ be operator systems, $I$ an index set, and $\phi$
a linear map $\Mat_I (\ov)_\bd \To \Mat_I (\ow)_\bd$.
\begin{alist}
\item
If $\phi$ is unital and has Schur-action then $\phi(p_i) = p_i$ for
all $i \in I$.
\item
If $\phi(p_i) = p_i$ for all $i \in I$ and $\phi$ is a completely
positive contraction then $\phi$ is unital and has Schur-action.
\end{alist}
\end{propn}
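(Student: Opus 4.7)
For part (a), I would compute directly from the Schur-action formula. Denoting the induced component maps by $(\phi^i_j)$, Schur-action of $\phi$ on the identity matrix (whose $(k,l)$-block is $\de^k_l I_\Hil$) produces the matrix with $(k,l)$-block $\de^k_l \phi^k_k(I_\Hil)$, and unitality of $\phi$ forces $\phi^i_i(I_\Hil) = I_\Kil$ for every $i$. Evaluating Schur-action on $p_i$---whose only nonzero block is $(p_i)^i_i = I_\Hil$---then yields $\phi(p_i) = p_i$ immediately.

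For part (b), I split the argument into unitality and Schur-action.

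\emph{Unitality.} Proposition \ref{A} gives $\norm{\phi(I_\Hil)} = \norm{\phi} \les 1$, so the positive operator $\phi(I_\Hil)$ satisfies $\phi(I_\Hil) \les I_{\Kil\ot l^2(I)}$. On the other hand, applying positivity of $\phi$ to $I_\Hil - p_i \ges 0$ and using $\phi(p_i) = p_i$ gives $\phi(I_\Hil) \ges p_i$ for each $i \in I$. Sandwiching with $E^{(i)}$ and $E_{(i)}$ forces the $(i,i)$-block of $\phi(I_\Hil)$ to equal $I_\Kil$. The positive operator $D := I - \phi(I_\Hil)$ thus has all diagonal blocks zero; computing $\norm{D^{1/2} E_{(i)} \xi}^2 = \ip{\xi}{E^{(i)} D E_{(i)} \xi} = 0$ shows $D E_{(i)} = 0$ for each $i$, and density of the algebraic sum of the ranges $\Ran E_{(i)}$ gives $D = 0$, that is $\phi$ is unital.

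\emph{Schur-action.} By Lemma \ref{E}, I only need $\phi(p_i A p_j) = p_i \phi(A) p_j$ for all $A \in \Mat_I(\ov)_\bd$ and $i,j \in I$. Arveson's extension theorem lifts $\phi$ to a unital CP map $\wt\phi \colon B(\Hil \ot l^2(I)) \To B(\Kil \ot l^2(I))$ that agrees with $\phi$ on $\Mat_I(\ov)_\bd$. Since $\wt\phi$ is unital and $2$-positive and $p_i$ is a projection with $\wt\phi(p_i) = p_i$ also a projection, both Kadison--Schwarz equalities $\wt\phi(p_i^* p_i) = \wt\phi(p_i)^* \wt\phi(p_i)$ and $\wt\phi(p_i p_i^*) = \wt\phi(p_i) \wt\phi(p_i)^*$ hold, placing $p_i$ in the multiplicative domain of $\wt\phi$. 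The standard bimodule identity $\wt\phi(p_i X p_j) = p_i \wt\phi(X) p_j$ then holds for every $X \in B(\Hil \ot l^2(I))$; specialising to $X = A$ and noting that $p_i A p_j \in \Mat_I(\ov)_\bd$ yields $\phi(p_i A p_j) = p_i \phi(A) p_j$ as required.

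The main obstacle lies in the Schur-action step: because $\Mat_I(\ov)_\bd$ is only an operator system, not closed under products, one cannot apply Kadison--Schwarz directly to $A \in \Mat_I(\ov)_\bd$ since $A^*A$ need not lie in the domain of $\phi$. The Arveson extension circumvents this by moving the multiplicative-domain argument into the enveloping $C^*$-algebra where all products are available, after which the conclusion may be restricted back to $\Mat_I(\ov)_\bd$.
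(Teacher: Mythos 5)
Your proof is correct and follows essentially the same route as the paper's: Arveson's extension theorem followed by Choi's multiplicative-domain result applied to the projections $p_i$, then Lemma~\ref{E}. The only difference is cosmetic --- you establish unitality first by a direct positivity argument (which lets you invoke the unital form of Choi's theorem), whereas the paper applies Choi's result to the merely contractive extension, remarking that contractivity suffices there, and deduces unitality afterwards from the Schur-action.
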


\begin{proof}
(a) Immediate from Lemma~\ref{E}.

(b) Let $\hil = l^2(I)$ and identify $\Mat_I (\ov)_\bd$ with $\ov \otm
B(\hil)$. Suppose that $\ov$ and $\ow$ are operator systems in
$B(\Hil)$ and $B(\Kil)$ respectively, then, by Arveson's Hahn-Banach
Theorem (\cite{Paulsen}, Theorem~7.5), $\phi$ extends to a CP
contraction $\phi': B(\Hil \ot \hil) \To B(\Kil \ot \hil)$. Moreover,
$\phi'(p_i^2) = \phi'(p_i) = p_i = \phi'(p_i)^2$, so we have equality
in the Kadison-Schwarz inequality and hence, by a result of Choi
(\cite{Paulsen}, Proposition~3.18), it follows that
\[
\phi' (p_i T p_j) = p_i \phi' (T) p_j \quad \text{ for all } T \in
B(\Hil \ot \hil), i,j \in I.
\]
In particular for $T \in \ov \otm B(\hil)$, $p_i T p_j \in \ov \otm
B(\hil)$ and so $\phi$ has Schur-action by Lemma~\ref{E}. Finally,
$\phi$ is unital since for all $i,j \in I$
\[
p_i \phi(I) p_j = \phi(p_ip_j) = \delta^i_j \phi(p_i) = \delta^i_j p_i
= p_i I p_j. \qed
\]
\renewcommand{\qed}{}
\end{proof}

\begin{rems}
(i) Choi's result is stated in~\cite{Paulsen} for unital maps, but the
proof shows that contractivity suffices.

(ii) In the absence of a normality assumption on $\phi$, the following
example shows that the contractivity assumption in~(b) is needed. Let
$\hil = l^2(\Nat)$, let $\Cpt$ denote the algebra of compact operators
on $\hil$ and let $\ov = \ow = \Comp$, so that $\Mat_\Nat (\ov)_\bd
\cong B(\hil)$, and let $\varphi = \id_\Cpt$. For $\la > 1$ define a
CP extension of $\varphi$ to $\Cpt +\Comp I$ by
\[
\varphi^\la (T +\mu I) = T +\la \mu I,
\]
using Proposition~\ref{Ua}, and extend this to a CP map $\Phi^\la$ on
$B(\hil)$ by Arveson's Hahn-Banach Theorem. Then $\Phi^\la (p_i) =
p_i$ but $\Phi^\la (I) = \la I \neq I$.
\end{rems}

\section{Quantum stochastic processes}
\label{section qsp}

Let $\noise$ be a fixed but arbitrary complex Hilbert space, referred
to at the \ti{noise dimension space}. The orthogonal sum $\Comp \op
\noise$ is denoted $\khat$.

\subsection*{Fock space}

The symmetric Fock space over $L^2 (\Rplus; \noise)$ is denoted
$\Fock$. We use normalised exponential vectors (also called coherent
vectors)
\[
\w{f} := e^{-\frac{1}{2} \norm{f}^2} \bigl( (n!)^{-1/2} f^{\ot n}
\bigr)_{n \ges 0}, \ f \in L^2 (\Rplus; \noise);
\]
these are linearly independent and satisfy $\ip{\w{f}}{\w{g}} =
\exp(-\chi(f,g))$, where, for any Hilbert space $\Hil$,
\[
\chi(u,v) := \tfrac{1}{2}(\norm{u}^2 +\norm{v}^2) -\ip{u}{v}, \quad
u,v \in \Hil.
\]
For a subset $\Til$ of $\noise$ we set
\[
\Exps_\Til := \Lin \{ \w{f}: f \in \Step_\Til\},
\]
where $\Step_\Til$ denotes the collection of (right-continuous)
$\Til$-valued step functions in $L^2 (\Rplus; \noise)$, and abbreviate
to $\Exps$ and $\Step$ when $\Til$ is all of $\noise$ (note that
necessarily $0 \in \Til$). Then $\Exps_\Til$ is dense in $\Fock$ if
and only if $\Til$ is total in $\noise$. For example $\Til = \{0\}
\cup \{d_i: i \in I_0\}$ for an orthonormal basis $\{d_i\}_{i \in
I_0}$ for $\noise$. For a proof of this result, which is due to
Parthasarathy and Sunder, and Skeide, and the basics of quantum
stochastics, we refer to~\cite{qsc lects}.

For each subinterval $J$ of $\Rplus$ we denote the symmetric Fock
space over $L^2 (J; \Rplus)$ by $\Fock_J$, and the identity operator
on $\Fock_J$ by $I_J$. Then the natural identifications
\[
\Fock = \Fock_{[0,r[} \ot \Fock_{[r,t[} \ot \Fock_{[t,\infty[}, \quad 0
\les r \les t \les \infty,
\]
are effected by
\[
\w{f} \mapsto \w{f|_{[0,r[}} \ot \w{f|_{[r,t[}} \ot
\w{f|_{[t,\infty[}},
\]
and $\Fock_{[0,t[}$ is naturally isometric to the subspace
$\Fock_{[0,t[} \ot \vp(0|_{[t,\infty[})$ of $\Fock$.

By this means we make the identifications
\[
B(\Fock) = B(\Fock_{[0,r[}) \vot B(\Fock_{[r,t[}) \vot
B(\Fock_{[t,\infty[})
\]
and, in turn,
\[
B(\Fock_{[r,t[}) = I_{[0,r[} \ot B(\Fock_{[r,t[}) \ot I_{[t,\infty[}
\subset B(\Fock).
\]

The \ti{Fock-Weyl operators} may be defined on $\Fock$ by continuous
linear extension of the prescription
\begin{equation}\label{Weyl ops}
W(f): \w{g} \mapsto e^{-i \im \ip{f}{g}} \w{f+g}, \quad f,g \in
L^2(\Rplus; \noise).
\end{equation}
These are unitary operators satisfying the canonical commutation
relations in exponential/Weyl form:
\[
W(f) W(g) = e^{-i \im \ip{f}{g}} W(f+g).
\]
Note that the probabilistic normalisation (\cite{qsc lects},
\cite{Meyer}, \cite{Partha}) is preferred here rather than the usual
quantum theoretic one (\cite{BrR}).

We next record a notation which will be heavily used in the sequel:
\begin{equation}\label{Ef notation}
E(f) := E_{\w{f}}, \quad f \in L^2 (\Rplus; \noise).
\end{equation}
Thus $E(f) = (I \ot W(f)) E_{\w{0}}$.

The \ti{CCR flow} on $B(\Fock)$ is the one-parameter semigroup of
normal, unital, ${}^*$-endomorphisms determined by
\[
\sigma^\noise_t \bigl(W(f)\bigr) = W(s_t f)
\]
where $(s_t)_{t \ges 0}$ is the semigroup of right shifts on $L^2
(\Rplus; \noise)$. Thus $\sigma^\noise_t (B(\Fock)) = I_{[0,t[} \ot
B(\Fock_{[t,\infty[})$.

\subsection*{Processes}

Let $\ov$ be an operator space in $B(\init; \init')$. A \ti{bounded
quantum stochastic process in $\ov$} (with noise dimension space
$\noise$) is a family of operators $(X_t)_{t \ges 0}$ satisfying the
adaptedness condition
\[
X_t \in \ov \otm B(\Fock_{[0,t[}) \ot I_{[t,\infty[} \ \text{ for all
} t \in \Rplus.
\]
In practice a weak measurability condition is also imposed; however in
this paper such an assumption plays no role. When $\ov = B(\init)$ we
speak of a \ti{bounded QS process on $\init$}. The self-adjoint
unitary process $R^\noise$ defined by
\[
R^\noise_t \vp(f) = \vp(r_tf) \ \text{ where } \ (r_t f)(s)
= \begin{cases} f(t-s) & \text{if } s\in [0,t[, \\ f(s) & \text{if }
s\in [t,\infty[, \end{cases}
\]
plays a fundamental role. For a bounded process $X$ on $\init$ the
\emph{time-reversed} process is defined by
\begin{equation}\label{time-reversed}
X^R := (R_t X_t R_t)_{t\geq 0} \ \text{ where } \ R_t = I_\init \ot
R_t^\noise.
\end{equation}

We are primarily interested in bounded QS processes \ti{on} an
operator space $\ov$. These are families of bounded operators
$k_t: \ov \To \ov \otm B(\Fock)$ such that $(k_t(a))_{t \ges 0}$ is a
process in $\ov$ for each $a \in \ov$. Such a process is called
\ti{completely bounded}, \ti{completely contractive}, or (when $\ov$
is an operator system or $C^*$-algebra) \ti{completely positive}, if
each $k_t$ is. The property $k_t (\ov) \subset \ov \otm B(\Fock)$ is a
noncommutative form of \ti{Feller condition} (see~\cite{son}).

\section{Completely bounded quantum stochastic cocycles}
\label{opspacecoc}

Let $\ov$ be an operator space in $B(\init; \init')$. A completely
bounded QS process $k$ on $\ov$ is called a \ti{quantum stochastic
cocycle on $\ov$} if it satisfies
\[
k_0 = \iota_\Fock \ \text{ and } \ k_{r+t} = \wh{k}_r \circ \sigma_r
\circ k_t \ \text{ for } r,t \in \Rplus,
\]
where $\iota_\Fock$ denotes the ampliation $a \mapsto a \ot I_\Fock$,
$\sigma_r$ is the shift obtained by restriction to $\ov \otm B(\Fock)$
of the map $\id_{B(\init; \init')} \vot \sigma^\noise_r$, and
$\wh{k}_r := k_r \otm \id_{B(\Fock_{[r,\infty[})}$. For the lifting
$\wh{k}_r$, the following identifications are invoked:
\[
\Ran \sigma_r = \ov \otm B(\Fock_{[r,\infty[}) \ \text{ and } \ \ov
\otm B(\Fock_{[0,r[}) \otm B(\Fock_{[r,\infty[}) = \ov \otm B(\Fock).
\]
In this paper, \emph{all QS cocycles on an operator space will be
assumed to be completely bounded}.

To each bounded process $k$ we associate the family of bounded
operators $k^{f,g}_t: \ov \To \ov$, indexed by ordered pairs $(f,g)$
from $L^2_\text{loc} (\Rplus; \kil)$, defined, in the
notation~\eqref{Ef notation}, by
\begin{equation}\label{reduced maps}
k^{f,g}_t (a) = E(f_{[0,t[})^* k_t (a) E(g_{[0,t[}).
\end{equation}

\begin{rems}
(i) Unnormalised exponential vectors were used for defining the maps
$k^{f,g}_t $ in earlier papers
([$\text{LW}\!_{\text{2,3}}$]) %preferred for arXiv version
% (\cite{aunt}, \cite{son}]). for published version
 Benefits of
normalising will be seen later.

(ii) We identify the noise dimension space $\noise$ with the constant
functions in $L^2_\text{loc} (\Rplus; \noise)$.
\end{rems}

In the context of operator algebras and completely positive processes,
 the proposition below appeared in~\cite{gran} for finite-dimensional $\noise$,
and in~\cite{aunt}.

\begin{propn}\label{J}
Let $k$ be a completely bounded process on an operator space $\ov$
 in $B(\init; \init')$,
set $\mapassocsemigp^{x,y}_t := k^{x,y}_t$ \tu{(}$x,y \in \noise$,
$t \ges 0$\tu{)} and let $\Til$ and $\Til'$ be total subsets of
$\noise$ containing $0$. Then the following are equivalent\tu{:}
\begin{rlist}
\item
$k$ is a QS cocycle on $\ov$\tu{;}
\item
$k_0^{f,g} = \id_\ov$ and $k^{f,g}_{r+t} = k^{f,g}_r \circ k^{s^*_r
f, s^*_r g}_t \ \text{ for all } f \in \Step_\Til$, $g \in
\Step_{\Til'}$ and $r, t \ges 0$.
\item
 For all $x \in \Til$ and $y\in\Til'$, $(\mapassocsemigp^{x,y}_t)_{t
\ges 0}$ defines a semigroup on $\ov$, and for all $f \in
\Step_\Til$, $g \in \Step_{\Til'}$ and $t > 0$
\begin{equation}\label{k semigroup rep}
k^{f,g}_t = \mapassocsemigp^{x_0,y_0}_{t_1-t_0} \circ \cdots \circ
\mapassocsemigp^{x_m,y_m}_{t_{m+1}-t_m},
\end{equation}
where $t_0 = 0$, $t_{m+1}=t$, $\{t_1 < \cdots < t_m \}$ is the
\tu{(}possibly empty\tu{)} union of the sets of points of
discontinuity of $f$ and $g$ in $]0,t[$ and, for $i=0,\ldots,m$,
$x_i := f(t_i)$ and $y_i := g(t_i)$.
\item
 For all $f \in \Step_\Til$, $g \in \Step_{\Til'}$ and $t \ges 0$,
$k_0^{f,g} = \id_\ov$ and, whenever $\{0 = s_0 \les s_1 \les \ldots
\les s_{n+1} = t\}$ contains all the points of discontinuity of
$f_{[0,t[}$ and $g_{[0,t[}$,
\begin{equation}\label{k semigroup rep2}
k^{f,g}_t = \mapassocsemigp^{x_0,y_0}_{s_1-s_0} \circ \cdots \circ
\mapassocsemigp^{x_n,y_n}_{s_{n+1}-s_n}
\end{equation}
where, for $j=0,\ldots,n$, $x_j:= f(s_j)$ and $y_j:= g(s_j)$.
\end{rlist}
\end{propn}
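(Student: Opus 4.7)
The plan is to organise the four-way equivalence around condition~(ii): I would establish (i)~$\iff$~(ii) by translating the operator-level cocycle identity into the matrix-element identity of~(ii) using the Fock-space factorisation, and then chain (ii)~$\iff$~(iii)~$\iff$~(iv) by combinatorial bookkeeping driven by the semigroup property of $\mapassocsemigp^{x,y}$.

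For (i)~$\implies$~(ii), I would sandwich the cocycle identity $k_{r+t} = \wh{k}_r \circ \sigma_r \circ k_t$ between $E(f_{[0,r+t[})^*$ and $E(g_{[0,r+t[})$, using the factorisations $\Fock_{[0,r+t[} = \Fock_{[0,r[} \ot \Fock_{[r,r+t[}$ and $\w{f_{[0,r+t[}} = \w{f_{[0,r[}} \ot \w{f_{[r,r+t[}}$. The shift $\sigma_r$, being the second quantisation of the right shift $s_r$, carries $\w{(s_r^* f)_{[0,t[}}$ to $\w{f_{[r,r+t[}}$, so the matrix element of $\sigma_r(k_t(a))$ over the $[r,r+t[$ factor is precisely $k^{s_r^* f,\, s_r^* g}_t(a)$. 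Then $\wh{k}_r = k_r \otm \id_{B(\Fock_{[r,\infty[})}$ applies $k_r$ to the $\ov$-slot while leaving the $[r,r+t[$ factor untouched, so the matrix element over $[0,r[$ yields $k^{f,g}_r\bigl(k^{s_r^* f,\, s_r^* g}_t(a)\bigr)$. The initial condition $k_0 = \iota_\Fock$ gives $k_0^{f,g} = \id_\ov$. For the converse (ii)~$\implies$~(i), I would invoke Lemma~\ref{slice} together with the Parthasarathy--Sunder--Skeide totality of $\Exps_\Til$ and $\Exps_{\Til'}$ in $\Fock$ to conclude that an operator identity in $\ov \otm B(\Fock)$ is determined by its matrix elements $E(f_{[0,r+t[})^* \cdot E(g_{[0,r+t[})$ for $f \in \Step_\Til,\ g \in \Step_{\Til'}$; reversing the computation then recovers $k_{r+t} = \wh{k}_r \circ \sigma_r \circ k_t$.

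For (ii)~$\implies$~(iii), I would first take $f \equiv x$ and $g \equiv y$ constant in (ii) (noting $s_r^* x = x$) to obtain the semigroup identity $\mapassocsemigp^{x,y}_{r+t} = \mapassocsemigp^{x,y}_r \circ \mapassocsemigp^{x,y}_t$, then iterate (ii) at each point of discontinuity of $f$ and $g$ in $]0,t[$, noting that on each interval of constancy $k^{f,g}$ reduces to the appropriate $\mapassocsemigp^{x_j, y_j}$. The implication (iii)~$\implies$~(iv) is immediate from the semigroup property: inserting additional subdivision points within an interval of constancy merely replaces $\mapassocsemigp^{x,y}_{s_{j+1}-s_j}$ by $\mapassocsemigp^{x,y}_{s_{j+1}-s'} \circ \mapassocsemigp^{x,y}_{s'-s_j}$, which equals the original. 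Finally (iv)~$\implies$~(ii) follows by applying (iv) to any partition of $[0,r+t[$ containing $r$ together with all discontinuities of $f_{[0,r+t[}$ and $g_{[0,r+t[}$, then splitting the resulting composition at the index corresponding to $r$ and relabeling the second half via $s_r^*$.

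The principal obstacle will be the operator-to-matrix-element translation in (i)~$\iff$~(ii): one must carefully track the Fock-space tensor factorisation, the implementation of $\sigma_r$ by second quantisation of the right shift, and the interpretation of $\wh{k}_r$ through slice maps, so that the clean two-parameter identity of (ii) emerges without spurious exponential factors. The normalisation convention $\w{f}$ in place of $\varepsilon(f)$ is precisely what absorbs those factors and yields the composition law in the simple form stated, as presaged in the remark preceding the proposition. Once this translation is in hand, the remaining implications amount to routine manipulation of partitions using the semigroup property of the $\mapassocsemigp^{x,y}$.
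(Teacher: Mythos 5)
Your proposal is correct and follows essentially the same route as the paper: the equivalence of (i) and (ii) is obtained by sandwiching the cocycle identity between $E(f_{[0,r+t[})^*$ and $E(g_{[0,r+t[})$, using the factorisation of exponential vectors, the intertwining of $\sigma_r$ with the shift $s_r$ on test functions, and the slice-map description of $\wh{k}_r$, then appealing to adaptedness and the totality of $\Exps_\Til$, $\Exps_{\Til'}$; the chain (ii)~$\Iff$~(iii)~$\Iff$~(iv) is exactly the paper's partition bookkeeping resting on $s_u^* z = z$ for constant $z$. The only difference is one of exposition: you spell out the partition-splitting arguments that the paper compresses into a single sentence.
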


\begin{proof}
Let $r,s \in
\Rplus$ with $r \les s$. The following identities, in which $h \in L^2
(\Rplus; \kil)$, $T \in \ov \otm B(\Fock_{[r,\infty[})$ and $X \in
B(\init; \init') \vot B(\Fock)$, are straightforward consequences of
the definitions:
\begin{align*}
&E (h) = E (h_{[r,\infty[}) E(h_{[0,r]}), \\
&E (f_{[r,\infty[})^* \wh{k}_r (T) E (g_{[r,\infty[})= k_r \bigl(
E(f_{[r,\infty[})^* T E(g_{[r,\infty[}) \bigr), \\
&E(f_{[r,\infty[})^* \sigma_r (X) E(g_{[r,\infty[}) = E(s_r^* f)^* X
E(s^*_r g),
\end{align*}
as is the inclusion
\[
\wh{k}_r \bigl(\ov \otm B(\Fock_{[r,s[}) \ot I_{[s,\infty[} \bigr)
\subset \ov \otm B(\Fock_{[0,s[}) \ot I_{[s,\infty[}.
\]
Therefore, for $a \in V$ and $r,t \in \Rplus$,
\begin{multline*}
E(f_{[0,r+t[})^* (\wh{k}_r \circ \sigma_r \circ k_t) (a)
E(g_{[0,r+t[}) \\
\begin{aligned}
&= E(f_{[0,r[})^* E(f_{[r,r+t[}) \wh{k}_r \bigl( (\sigma_r \circ
k_t)(a) \bigr) E(g_{[r,r+t[}) E(g_{[0,r[}) \\
&= E(f_{[0,r[})^* k_r \bigl( E(s_r^* f_{[0,r+t[}) k_t (a) E(s^*_r
g_{[0,r+t[}) \bigr) E(g_{[0,r[}) \\
&= k^{f,g}_r \circ k^{s^*_r f, s^*_r g}_t (a),
\end{aligned}
\end{multline*}
since $(\wh{k}_r \circ \sigma_r \circ k_t) (a) \in \ov \otm
B(\Fock_{[0,r+t[})$, and so the equivalence of (i) and (ii) follows
from the totality of $\Exps_\Til$ and $\Exps_{\Til'}$ in $\Fock$. The
equivalence of (ii), (iii) and (iv) follows from the fact that $s^*_u
z = z$ for all $z \in \kil$ and $u \in \Rplus$.
\end{proof}

We refer to $\{\mapassocsemigp^{x,y}: x,y \in \noise\}$ as the
\emph{associated semigroups} of the cocycle, $\mapassocsemigp^{0,0}$
as its \emph{\tu{(}vacuum\tu{)} expectation semigroup}, and~\eqref{k
semigroup rep}
 or ~\eqref{k semigroup rep2} as the
 \emph{semigroup decomposition/characterisation}
for QS cocycles. Note that if the cocycle $k$ is contractive then so
are each of its associated semigroups. The following is an immediate
consequence of the above.

\begin{cor}\label{cor: uniqueness}
Let $j$ and $k$ be completely bounded QS cocycles on an operator space,
 with respective associated semigroups
$\{\mapassocsemigp^{x,y}: x,y \in \noise\}$ and
$\{\mapassocsemigptwo^{x,y}: x,y \in \noise\}$, and let $\Til$ and
$\Til'$ be total subsets of $\noise$ containing $0$. Then $j = k$ if
and only if $\mapassocsemigp^{x,y} = \mapassocsemigptwo^{x,y}$ for
all $x \in \Til'$ and $y \in \Til$.
\end{cor}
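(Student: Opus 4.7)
The plan is to note that the forward direction is immediate from the very definitions $\mapassocsemigp^{x,y}_t := k^{x,y}_t$ and $\mapassocsemigptwo^{x,y}_t := j^{x,y}_t$, so the content lies entirely in the converse. The converse will be deduced from the semigroup decomposition of Proposition~\ref{J} together with a standard density/adaptedness argument for exponential vectors.

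For the converse, suppose $\mapassocsemigp^{x,y} = \mapassocsemigptwo^{x,y}$ for all $x \in \Til'$ and $y \in \Til$, and apply the equivalence (i)$\Iff$(iv) of Proposition~\ref{J} to both $k$ and $j$, with the totality data taken to be $\Til'$ in the role there named $\Til$ and $\Til$ in the role there named $\Til'$. Formula~\eqref{k semigroup rep2} then writes $k^{f,g}_t$, for $f \in \Step_{\Til'}$ and $g \in \Step_\Til$, as a finite composition of operators $\mapassocsemigp^{x_j,y_j}_{s_{j+1}-s_j}$ with $x_j = f(s_j) \in \Til'$ and $y_j = g(s_j) \in \Til$, and writes $j^{f,g}_t$ by the same formula with $\mapassocsemigptwo$ in place of $\mapassocsemigp$. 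The assumed coincidence of associated semigroups thus forces $k^{f,g}_t = j^{f,g}_t$ for all such $f,g$ and all $t \ges 0$. Unfolding the definition~\eqref{reduced maps} and pairing with vectors $u \in \init'$, $v \in \init$ yields
\[
\bip{u \ot \e{f_{[0,t[}}}{j_t(a) (v \ot \e{g_{[0,t[}})} = \bip{u \ot \e{f_{[0,t[}}}{k_t(a) (v \ot \e{g_{[0,t[}})}
\]
for every $a \in \ov$ and every such $f,g,u,v$.

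To upgrade this to operator equality $j_t(a) = k_t(a)$ in $\ov \otm B(\Fock)$, invoke adaptedness: both $j_t(a)$ and $k_t(a)$ lie in $\ov \otm B(\Fock_{[0,t[}) \ot I_{[t,\infty[}$, and under the factorisation $\Fock = \Fock_{[0,t[} \ot \Fock_{[t,\infty[}$ the component in $\Fock_{[t,\infty[}$ of each exponential $\e{h_{[0,t[}}$ is simply $\e{0|_{[t,\infty[}}$. It therefore suffices that the sets $\{\e{f_{[0,t[}}: f \in \Step_{\Til'}\}$ and $\{\e{g_{[0,t[}}: g \in \Step_\Til\}$ be total in $\Fock_{[0,t[}$. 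Since $\Til, \Til'$ are total in $\noise$ and contain $0$, the corresponding step-function sets are total in $L^2([0,t[;\noise)$, so totality of the exponentials in $\Fock_{[0,t[}$ follows from the Parthasarathy--Sunder--Skeide theorem recalled in Section~\ref{section qsp}. Combined with the boundedness of $j_t(a)$ and $k_t(a)$, this delivers $j_t(a) = k_t(a)$, and hence $j = k$.

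The only real bookkeeping lies in reconciling the ordering of $\Til, \Til'$ in the Corollary (where the hypothesis is stated for $x \in \Til'$, $y \in \Til$) with the convention in Proposition~\ref{J} (where $f$-values populate the first-named subset and $g$-values the second); this is a matter of relabelling. Beyond that, the proof is purely a density argument, the substantive work having already been carried out in Proposition~\ref{J}.
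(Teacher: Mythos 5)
Your argument is correct and is exactly what the paper intends by calling the corollary ``an immediate consequence of the above'': the forward direction is definitional, and the converse follows by expressing $j^{f,g}_t$ and $k^{f,g}_t$ via the semigroup decomposition of Proposition~\ref{J} (with the roles of $\Til$ and $\Til'$ relabelled to match the corollary's indexing) and then invoking adaptedness together with the Parthasarathy--Sunder--Skeide totality of $\Exps_{\Til}$ and $\Exps_{\Til'}$. The only cosmetic slip is writing $\e{\cdot}$ for the exponential vectors where the maps $k^{f,g}_t$ are defined with the normalised vectors $\w{\cdot}$; this does not affect the density argument.
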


\begin{notation}
If $\{\mathcal{R}^{x,y}: x,y \in \Til\}$ is an indexed family of
linear maps on the operator space $\ov$ then, for $n \ges 1$ and
$\bfx \in \Til^n$, we write $\mathcal{R}^\bfx$ for the Schur-action
map on $\Mat_n (\ov)$ with component maps $\{\mathcal{R}^{x_i,x_j}:
i,j = 1, \ldots, n\}$:
\[
\mathcal{R}^{\bfx}([u_j^i]) = [\mathcal{R}^{x_i,x_j}(u_j^i)]
\]
\end{notation}

In conjunction with the representation~\eqref{knN} below, the
following result is useful for extracting positivity and contractivity
properties of QS cocycles.

\begin{propn}\label{kft maps}
Let $k$ be a QS cocycle on an operator space $\ov$
 with associated semigroups
$\{\mapassocsemigp^{x,y}: x,y \in \noise\}$,
  and let $\bff \in
\Step^N$. Then $k^\bff_t$, the Schur-action map on $\Mat_N (\ov)$ with
components $\{k^{f_i,f_j}_t: i,j=1,\ldots,N\}$, satisfies
\begin{equation}\label{kf semigroup rep}
k^\bff_t = \mapassocsemigp^{\bfx (0)}_{t_1-t_0} \circ \cdots \circ
\mapassocsemigp^{\bfx(n)}_{t_{n+1}-t_n}, \text{ where } \bfx(k) :=
\bff(t_k),
\end{equation}
whenever $\{0 = t_0 \les \cdots \les t_{n+1} = t\}$ contains the
discontinuities of $\bff_{[0,t[}$.
\end{propn}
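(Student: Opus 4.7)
The plan is to reduce the proposition to the componentwise semigroup decomposition already established in Proposition~\ref{J}, combined with the elementary observation that composition respects Schur-action.

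First I would fix $N \ges 1$, $\bff = (f_1, \ldots, f_N) \in \Step^N$ and a partition $\{0 = t_0 \les \cdots \les t_{n+1} = t\}$ containing all discontinuities of $\bff_{[0,t[}$. Since this partition contains the discontinuities of each individual $f_i$, it also contains those of each pair $(f_i, f_j)_{[0,t[}$. Hence by Proposition~\ref{J}\,(iv), each entry of the matrix $k_t^\bff$ decomposes as
\begin{equation*}
k_t^{f_i, f_j} = \mapassocsemigp^{x_i(0), x_j(0)}_{t_1 - t_0} \circ \cdots \circ \mapassocsemigp^{x_i(n), x_j(n)}_{t_{n+1} - t_n},
\end{equation*}
where $x_i(k) := f_i(t_k)$, so that $\bfx(k) = (x_1(k), \ldots, x_N(k))$.

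Next I would invoke the elementary fact that a composition of Schur-action maps on $\Mat_N(\ov)$ is again Schur-action, with components equal to the componentwise compositions. Indeed, if $A$ and $B$ act with Schur-components $\{A^{i,j}\}$ and $\{B^{i,j}\}$ respectively, then
\begin{equation*}
(A \circ B)([u^i_j]) = A([B^{i,j}(u^i_j)]) = [A^{i,j} \circ B^{i,j}(u^i_j)],
\end{equation*}
and iteration extends this to finitely many factors. Applying this to the $n+1$ Schur-action maps $\mapassocsemigp^{\bfx(k)}_{t_{k+1}-t_k}$ (whose $(i,j)$-component by definition is $\mapassocsemigp^{x_i(k), x_j(k)}_{t_{k+1}-t_k}$), the composition on the right-hand side of~\eqref{kf semigroup rep} is Schur-action with $(i,j)$-component
\begin{equation*}
\mapassocsemigp^{x_i(0), x_j(0)}_{t_1 - t_0} \circ \cdots \circ \mapassocsemigp^{x_i(n), x_j(n)}_{t_{n+1} - t_n} = k_t^{f_i, f_j},
\end{equation*}
which matches the $(i,j)$-component of $k_t^\bff$ by definition. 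Equality of Schur-action maps with matching components yields the claimed identity.

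There is no substantive obstacle: the only thing to be careful about is that the single partition $\{t_0, \ldots, t_{n+1}\}$ is simultaneously admissible for every pair $(f_i, f_j)$ — which is immediate since it dominates the union of discontinuity sets of the entries of $\bff$ — and that the Schur-action viewpoint makes componentwise composition automatic, so no genuine computation beyond Proposition~\ref{J}\,(iv) is required.
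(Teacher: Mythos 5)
Your proposal is correct and is essentially the paper's own argument, merely written out in full: the paper's proof is the one-line observation that each $\mapassocsemigp^{\bfx(k)}$ has Schur-action so the result follows from Proposition~\ref{J}, and your expansion (apply Proposition~\ref{J}\,(iv) entrywise, then use that compositions of Schur-action maps compose componentwise) is exactly what that one line compresses.
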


\begin{proof}
Since each semigroup $\mapassocsemigp^{\bfx (k)}$ has Schur-action,
the result follows immediately from Proposition~\ref{J}.
\end{proof}

\subsection*{Matrices and liftings of cocycles}

For $i,j \in \{1,\ldots,n\}$ let $\ov^i_j$ be an operator space in
$B(\init_i; \init_j')$ and let $k^i_j$ be a QS cocycle on
$\ov^i_j$. Then the Schur-action
\[
k_t ([a^i_j]) := [k^i_j (t) (a^i_j)]
\]
defines a QS cocycle on the operator space
\[
\ov := \{ [a^i_j] \in B(\ts{\bop_i \init_i; \bop_j \init_j')}): a^i_j
\in \ov^i_j \ \text{ for all } i,j\}.
\]
This follows from the identity
\[
k_t^{f,g} ([a^i_j]) = [k^i_j (t)^{f,g} (a^i_j)],
\]
for $f,g \in L^2_\text{loc} (\Rplus; \kil)$.

As an example let $k$ be a QS cocycle on an operator space $\ov$ in
$B(\init; \init')$, then $k^\dagger$ is a cocycle on $\ov^\dagger$
with $(k^\dagger_t)^{f,g} = (k^{g,f}_t)^\dagger$ and so,
recalling~\eqref{CB op space} and~\eqref{tilde space}, $\wt{k}$ is a
cocycle on the operator system $\wt{\ov}$. The associated semigroups
of $\wt{k}$ are given by
\begin{equation}\label{t}
\wt{\mapassocsemigp}^{x,y}_t \left(\begin{bmatrix} \al I_{\init'} &
a \\ b & \be I_\init \end{bmatrix} \right) = \begin{bmatrix} \al
e^{-t \chi(x,y)} I_{\init'} & \mapassocsemigp^{x,y}_t (a) \\
(\mapassocsemigp^{y,x}_t)^\dagger (b) & \be e^{-t \chi(x,y)} I_\init
\end{bmatrix};
\end{equation}
where $\{\mapassocsemigp^{x,y}: x,y\in\noise\}$ are the associated
semigroups of $k$. The identification of $\Mat_n (\wt{\ov})$ given
in~\eqref{a} entails the action
\begin{equation}\label{u}
\wt{\mapassocsemigp}^\bfx_t: \begin{bmatrix} I_{\init'} \ot \la & A
\\ B & I_\init \ot \mu \end{bmatrix} \mapsto \begin{bmatrix}
I_{\init'} \ot (\la \schur \wxt) & \mapassocsemigp^\bfx_t (A) \\
(\mapassocsemigp^\bfx_t)^\dagger (B) & I_\init \ot (\mu \schur \wxt)
\end{bmatrix},
\end{equation}
where for each $n \ges 1$, $\bfx \in \noise^n$ and $t \ges 0$, we
define the Grammian matrix
\begin{equation}\label{GramMat}
\wxt := \bigl[ \ip{\w{x^i_{[0,t[}}}{\w{x^j_{[0,t[}}} \bigr] =
[e^{-t\chi (x^i,x^j)}] \in \Mat_n (\Comp)_+,
\end{equation}
and $\schur$ denotes the Schur product of scalar matrices.

\medskip\noindent
\emph{Warning.} The scope of the tilde is important: in general
\[
\begin{bmatrix}
\al e^{-t\chi(x,y)} I_{\init'} & \mapassocsemigp^{x,y}_t(a) \\
(\mapassocsemigp^{y,x}_t)^\dagger (b) & \be e^{-t\chi(x,y)} I_\init
\end{bmatrix}
\neq
\begin{bmatrix}
\al I_{\init'} & \mapassocsemigp^{x,y}_t (a) \\
\mapassocsemigp^{x,y}_t (b^*)^* & \be I_\init
\end{bmatrix},
\]
so $\wt{\mapassocsemigp}^{x,y}_t \neq \wt{\mapassocsemigp^{x,y}_t}$.

The following identity is useful for the examination of properties of
a QS cocycle $k$ on an operator space $\ov$ in $B(\init; \init')$. If
$\xi \in (\init \aot \Exps)^n$ with representation $\xi^i = \sum^N_{p
=1} u^i_p \ot \w{f_p^i}, i=1, \ldots, n$, and $\xi' \in (\init' \aot
\Exps)^n$ with corresponding `primed' representation then, for each $A
\in \Mat_n (\ov)$,
\begin{equation}\label{knN}
\ip{\xi'}{k^{(n)}_t (A) \xi} = \langle\eta', k^{(nN)}_t (A \ot
\square_N) \eta\rangle
\end{equation}
where
\begin{equation}\label{box}
\square_N := \begin{bmatrix} 1 & 1 & \cdots & 1 \\ 1 & 1 & \cdots & 1
\\ \vdots & \vdots & \ddots & \vdots \\ 1 & 1 & \cdots &
1 \end{bmatrix} \in \Mat_N (\Comp) \ \text{ and } \ \eta
= \begin{pmatrix} \eta_1 \\ \vdots \\ \eta_N \end{pmatrix} \in \bigl(
(\init \aot \Exps)^n \bigr)^N
\end{equation}
with $\eta^i_p = u^i_p \ot \w{f_p^i}$, and $\eta'$ defined
correspondingly.

Another useful construction of new cocycles from old is obtained by
lifting, as follows. Recall the notation~\eqref{flipped lift}. If $k$
is a QS cocycle on an operator space $\ov$ then, for any Hilbert space
$\hil $, $k^{\hil} := (k^{\hil}_t)_{t \ges 0}$ defines a QS cocycle on
$\ov \otm B(\hil)$. As an immediate application we extend a
fundamental estimate for $C_0$-semigroups to QS cocycles.

\begin{propn}\label{Jb}
Let $k$ be a QS cocycle on an operator space $\ov$, with locally
bounded CB norm. Then there exist constants $M \ges 1$ and $\omega \in
\Real$ such that
\[
\cbnorm{k_t} \les Me^{\omega t} \quad \text{ for all } t \ges 0.
\]
\end{propn}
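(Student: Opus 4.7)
The plan is to reduce the exponential bound to the standard fact that a locally bounded submultiplicative function on $\Rplus$ is dominated by $Me^{\omega t}$, once submultiplicativity of $t \mapsto \cbnorm{k_t}$ has been established from the cocycle identity.

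First I would unwind the cocycle relation $k_{r+t} = \wh{k}_r \circ \sigma_r \circ k_t$ at the level of CB norms. The map $\sigma_r$ is the restriction to $\ov \otm B(\Fock)$ of the ampliation $\id_{B(\init;\init')} \vot \sigma^\noise_r$, and since $\sigma^\noise_r$ is a normal unital $*$-endomorphism (hence completely isometric onto its range), $\sigma_r$ is completely contractive. The lifting $\wh{k}_r = k_r \otm \id_{B(\Fock_{[r,\infty[})}$ has $\cbnorm{\wh{k}_r} = \cbnorm{k_r}$ by Lemma~\ref{D}. Composing these estimates yields submultiplicativity:
\[
\cbnorm{k_{r+t}} \les \cbnorm{k_r} \cbnorm{k_t} \quad \text{for all } r,t \ges 0.
\]

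Next I would use the standing hypothesis that $t \mapsto \cbnorm{k_t}$ is locally bounded to set $M := \sup_{s \in [0,1]} \cbnorm{k_s}$, which is finite. Since $k_0 = \iota_\Fock$ is the ampliation $a \mapsto a \ot I_\Fock$, which is completely isometric, $\cbnorm{k_0} = 1$ and hence $M \ges 1$. For arbitrary $t \ges 0$, write $t = n+s$ with $n \in \Nat$ and $s \in [0,1[$. Iterating the submultiplicativity $n$ times gives
\[
\cbnorm{k_t} \les \cbnorm{k_1}^n \, \cbnorm{k_s} \les M^{n+1} = M \cdot e^{n \log M} \les M e^{\omega t}
\]
on setting $\omega := \log M \ges 0$. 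This produces the required bound with constants $M \ges 1$ and $\omega \in \Real$.

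The only nontrivial ingredient is the identity $\cbnorm{\wh{k}_r} = \cbnorm{k_r}$, which is supplied directly by Lemma~\ref{D}; everything else is a routine manipulation of the cocycle identity together with the classical argument for semigroup growth bounds. No continuity of $t\mapsto k_t$ is required, only local boundedness of the CB norm, so the argument proceeds in the cocycle setting just as cleanly as for $C_0$-semigroups.
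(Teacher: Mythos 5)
Your proof is correct and follows essentially the same route as the paper: submultiplicativity of $t \mapsto \cbnorm{k_t}$ from the cocycle identity together with $\cbnorm{\wh{k}_r} = \cbnorm{k_r}$ (Lemma~\ref{D}) and contractivity of the shift, followed by the standard semigroup growth-bound argument. The only cosmetic difference is that the paper first passes to the lifted cocycle $k^{\hil}$ for an infinite-dimensional $\hil$ so as to work with ordinary operator norms, whereas you argue directly with CB norms; both are justified by the same lemma.
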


\begin{proof}
Let $\hil$ be any infinite dimensional Hilbert space. Then by the
cocycle identity for $k^{\hil}$, and the complete isometry of the
shifts,
\[
\cbnorm{k_{r+t}} =\norm{k^{\hil}_{r+t}} \les \norm{\wh{k^{\hil}_r}}
\norm{k^{\hil}_t} = \cbnorm{k_r} \cbnorm{k_t}
\]
The result therefore follows by standard arguments from semigroup
theory (e.g.\ \cite{Davies}, Lemma~1.2.1).
\end{proof}

\subsection*{Operator QS cocycles}

In \cite{gran} and
 [$\text{LW}\!_{1,2}$], %preferred for arXiv version
% for submission version:
% \cite{mother} and \cite{aunt}
 properties of an
operator process on $\init$ are deduced from results concerning QS
cocycles, flows and QS differential equations on operator algebras,
by constructing an associated process on $B(\init)$. However, full
algebras are not necessarily the best choice, as is shown in
Theorem~\ref{W} below. More significantly the tools of operator
space theory --- in particular Proposition~\ref{basic mechanism} ---
provide an alternative means of seeing the two types of process from
a common viewpoint facilitating further analysis.

A bounded process $X$ on $\init $ is a \ti{left} (resp.\ \ti{right})
\ti{quantum stochastic cocycle} if $X_0 = I_{\init \ot \Fock}$ and
\[
X_{r+t} = X_r \sigma_r (X_t) \quad (\text{resp.\ } X_{r+t} = \sigma_r
(X_t) X_r) \quad \text{ for all } r,t\ges 0.
\]
Thus $X$ is a left cocycle on $\init $ if and only if $X^* :=
(X^*_s)_{s \ges 0}$ is a right cocycle on $\init$.

\begin{propn}\label{op to map cocycles}
For bounded QS processes $X$ and $Y$ on $\init$ define completely
bounded QS processes $\flowk{1}$ on $\ket{\init}$, $\flowk{2}$ on
$\bra{\init}$, and $\flowk{3}$ and $\flowk{4}$ on $B(\init)$ by
\begin{align*}
\flowk{1}_s (\ket{u}) &= X_s (\ket{u} \ot I_\Fock), &
\flowk{3}_s (a) &= X_s (a \ot I_\Fock) X^*_s, \ \text{ and } \\
\flowk{2}_s (\bra{u}) &= (\bra{u} \ot I_\Fock) Y_s, &
\flowk{4}_s (a) &= Y^*_s (a \ot I_\Fock) Y_s,
\end{align*}
for $u \in \init$ and $a \in B(\init)$.
\begin{alist}
\item
$\flowk{1}$ is a cocycle on $\ket{\init}$ if and only if $X$ is a left
cocycle on $\init$, in which case $\flowk{3}$ is a cocycle on
$B(\init)$.
\item
$\flowk{2}$ is a cocycle on $\bra{\init}$ if and only if $Y$ is a
right cocycle on $\init$, in which case $\flowk{4}$ is a cocycle on
$B(\init)$.
\end{alist}
Furthermore, $\norm{X_s} = \cbnorm{\flowk{1}_s} = \norm{\flowk{3}_s} =
\cbnorm{\flowk{3}_s}$\tu{;} similarly for $Y$, $\flowk{2}$ and
$\flowk{4}$.
\end{propn}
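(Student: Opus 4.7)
The plan is to transfer the operator cocycle identities for $X$ and $Y$ into mapping cocycle identities for $\flowk{1}$ and $\flowk{2}$ via Proposition~\ref{basic mechanism}, and then to verify the cocycle properties of $\flowk{3}$ and $\flowk{4}$ by direct calculation from their explicit formulas.

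First I would apply Proposition~\ref{basic mechanism} with $\hil = \init$, $\Hil = \Fock$ and $\Kil = \init \ot \Fock$, obtaining a completely isometric isomorphism between $B(\init \ot \Fock)$ and $CB(\ket{\init}; \ket{\init} \otm B(\Fock))$ which sends $X_s$ to $\flowk{1}_s$. This yields the norm identity $\cbnorm{\flowk{1}_s} = \norm{X_s}$ at once. Adaptedness of $\flowk{1}$ as a process on $\ket{\init}$ follows from that of $X$: writing $X_s = X_s' \ot I_{\Fock_{[s,\infty[}}$ with $X_s' \in B(\init \ot \Fock_{[0,s[})$, one sees that $\flowk{1}_s(\ket{u}) = \bigl( X_s'(\ket{u} \ot I_{\Fock_{[0,s[}}) \bigr) \ot I_{\Fock_{[s,\infty[}}$ lies in $\ket{\init} \otm B(\Fock_{[0,s[})$. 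The treatment of $Y, \flowk{2}$ is symmetric, using the second isomorphism of Proposition~\ref{basic mechanism}.

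Next I would establish the equivalence in (a). Since $\{(\ket{u} \ot I_\Fock) \vp : u \in \init,\, \vp \in \Fock\}$ has dense linear span in $\init \ot \Fock$, the cocycle identity $\flowk{1}_{r+t} = \wh{\flowk{1}}_r \circ \sigma_r \circ \flowk{1}_t$ is equivalent, after evaluation at $\ket{u}$ and application to $\vp$, to the operator identity $X_{r+t} = X_r \sigma_r(X_t)$, provided one verifies two slice computations: that $\sigma_r(\flowk{1}_t(\ket{u})) = \sigma_r(X_t)(\ket{u} \ot I_\Fock)$, which holds because $\sigma_r = \id_{B(\init)} \vot \sigma^\noise_r$ acts on the $\Fock$-factor while $\ket{u} \ot I_\Fock$ lives on the $\init$-factor; and that $\wh{\flowk{1}}_r(Z) = X_r Z$ for $Z \in \ket{\init} \otm B(\Fock_{[r,\infty[})$, which is the defining slice action of $\wh{\flowk{1}}_r = \flowk{1}_r \otm \id_{B(\Fock_{[r,\infty[})}$ from Lemma~\ref{D}, together with the formula $\flowk{1}_r(\bra{e} Z \ket{d}) = X_r(\bra{e} Z \ket{d} \ot I_\Fock)$.

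Assuming $X$ is a left cocycle, the cocycle property of $\flowk{3}$ follows from the chain
\begin{equation*}
\flowk{3}_{r+t}(a) = X_{r+t}(a \ot I_\Fock) X_{r+t}^* = X_r \sigma_r\bigl( X_t(a \ot I_\Fock) X_t^* \bigr) X_r^* = X_r \sigma_r(\flowk{3}_t(a)) X_r^*,
\end{equation*}
exploiting that $\sigma_r$ is a $*$-endomorphism fixing $a \ot I_\Fock$. A two-sided slice argument analogous to the one used for $\flowk{1}$ shows $\wh{\flowk{3}}_r(Z) = X_r Z X_r^*$ for $Z \in B(\init) \otm B(\Fock_{[r,\infty[})$, so the right-hand side equals $\wh{\flowk{3}}_r(\sigma_r(\flowk{3}_t(a)))$. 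The norm equalities come from the upper bound $\cbnorm{\flowk{3}_s} \les \norm{X_s}^2$ evident from the defining formula, combined with the lower bound $\norm{\flowk{3}_s(I_\init)} = \norm{X_s X_s^*} = \norm{X_s}^2$. Part (b) and the analogous assertions for $Y, \flowk{2}, \flowk{4}$ follow by the symmetric argument, interchanging left and right multiplication throughout. The main bookkeeping challenge lies in tracking the slice actions of $\sigma_r$ and $\wh{k}_r$ through the various spatial tensor-product identifications, but these reduce to routine applications of Lemma~\ref{D} and the standard action of the CCR flow.
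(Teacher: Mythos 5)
Your argument is correct and follows essentially the same route as the paper: the paper's proof likewise reduces everything to the adaptedness characterisation and the two ``easily verified relations'' $\wh{\flowk{1}}_r \circ \sigma_r \circ \flowk{1}_t(\ket{u}) = X_r\,\sigma_r(X_t)(\ket{u}\ot I_\Fock)$ and its $\flowk{3}$-analogue, which are exactly the slice computations you carry out via Proposition~\ref{basic mechanism} and Lemma~\ref{D}; the only stylistic difference is that the paper deduces part (b) by taking adjoints (using $\flowk{2}_s = \flowk{1}_s^\dagger$ when $Y = X^*$) rather than rerunning the symmetric argument. One small point: your (correct) computation yields $\norm{\flowk{3}_s} = \cbnorm{\flowk{3}_s} = \norm{X_s}^2$ rather than $\norm{X_s}$, so the displayed chain of norm equalities in the statement holds as written only when $\norm{X_s}\in\{0,1\}$ --- evidently a slip in the statement (harmless in the paper's applications, which concern contraction cocycles) that the paper's ``the norm identities are immediate'' passes over.
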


\begin{proof}
Adaptedness of the process $X$ amounts to the statement: for all $t
\ges 0$, $v \in \init$ and $f \in L^2([0,t[; \noise)$ there is $\xi
\in \init \ot \Fock_{[0,t[}$ such that
\[
X_t \bigl(v \ot \w{f} \ot \w{g}\bigr) = \xi \ot \w{g} \quad (g \in
L^2([t,\infty[; \noise)).
\]
Similarly for $\flowk{1}$ and $\flowk{3}$ with the left hand side
replaced by $\flowk{1}_t (\ket{v}) \w{f}$ or $\flowk{3}_t (a) v \ot
\w{f}$. The implications in~(a) then follow from the easily verified
relations
\begin{align*}
&\wh{\flowk{1}}_s \circ \sigma_s \circ \flowk{1}_t (\ket{u})
= X_s \sigma_s (X_t) (\ket{u} \ot I_\Fock); \\
&\wh{\flowk{3}}_r \circ \sigma_r \circ \flowk{3}_t (a) = X_r \sigma_r
(X_t) (a \ot I_\Fock) \sigma_s (X_t)^* X^*_r,
\end{align*}
and (b) follows by taking adjoints, since $\flowk{1}_s^\dagger =
\flowk{2}_s$ if $Y = X^*$. The norm identities are immediate.
\end{proof}

Thus the natural completely isometric isomorphism between $B(\init)
\vot B(\Fock)$ and $CB(\ket{\init}; \ket{\init} \vot B(\Fock))$
(resp.\ $CB(\bra{\init}; \bra{\init} \vot B(\Fock))$) interchanges
processes on $\init $ with processes on $\ket{\init}$ (resp.\
$\bra{\init}$) and left (resp.\ right) QS cocycles on $\init$ with QS
cocycles on $\ket{\init}$ (resp.\ $\bra{\init}$). Note also the
relations
\begin{equation}\label{k12 semigps}
\flowk{1}^{f,g}_s (\ket{u}) = X^{f,g}_s \ket{u} \ \text{ and } \
\flowk{2}^{f,g}_s (\bra{u}) = \bra{u} Y^{f,g}_s,
\end{equation}
where $X^{f,g}_s$ is defined analogously to~\eqref{reduced maps}.
These identities and Proposition~\ref{J} yields the following:

\begin{propn}
Let $X$ be a bounded QS process on $\init$, set
$\opassocsemigp^{x,y}_t := X^{x,y}_t$ \tu{(}$x, y \in \noise$, $t
\ges 0$\tu{)}, and let $\Til$ and $\Til'$ be total subsets of
$\noise$ containing $0$. Then the following are equivalent\tu{:}
\begin{rlist}
\item
$X$ is a left \tu{(}respectively right\tu{)} QS cocycle on $\init$.
\item
$X^{f,g}_0 = I_\init$ and $X^{f,g}_{r+t} = X^{f,g}_r X^{s^*_r f,
s^*_r g}_t$ \tu{(}resp.\ $X^{f,g}_{r+t} = X^{s^*_r f, s^*_r g}_t
X^{f,g}_r$\tu{)} for all $f \in \Step_\Til$, $g \in \Step_{\Til'}$
and $r, t \ges 0$.
\item
 For all $x \in \Til$ and $y \in \Til'$, $(\opassocsemigp^{x,y}_t)_{t
\ges 0}$ defines a semigroup on $\init$, and for all $f \in
\Step_\Til$, $g \in \Step_{\Til'}$ and $t > 0$
\begin{equation}\label{Opj}
X^{f,g}_t = \opassocsemigp^{x_0,y_0}_{t_1-t_0} \cdots
\opassocsemigp^{x_m,y_m}_{t_{m+1}-t_m} \quad (\text{resp.\ } \
\opassocsemigp^{x_m,y_m}_{t_{m+1}-t_m} \cdots
\opassocsemigp^{x_0,y_0}_{t_1-t_0})
\end{equation}
where $t_0=0$, $t_{m+1}=t$, $\{t_1 < \ldots < t_m\}$ is the
\tu{(}possibly empty\tu{)} union of the sets of discontinuity of $f$
and $g$ in $]0,t[$ and, for $i=0, \cdots, n$, $x_i := f(t_i)$ and
$y_i = g(t_i)$.
\item
 For all $f \in \Step_\Til$, $g \in \Step_{\Til'}$ and $t \ges 0$,
$X^{f,g}_0 = I$ and, whenever $\{0 = s_0 \les s_1 \les \ldots \les
s_{n+1} = t\}$ contains all the points of discontinuity of
$f_{[0,t[}$ and $g_{[0,t[}$,
\[
X^{f,g}_t = \opassocsemigp^{x_0,y_0}_{s_1-s_0} \cdots
\opassocsemigp^{x_n,y_n}_{s_{n+1}-s_n} \quad (\text{resp.\ } \
\opassocsemigp^{x_n,y_n}_{s_{n+1}-s_n} \cdots
\opassocsemigp^{x_0,y_0}_{s_1-s_0})
\]
where, for $j=0, \cdots, n$, $x_j := f(s_j)$ and $y_j := g(s_j)$.
\end{rlist}
\end{propn}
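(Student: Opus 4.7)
The plan is to reduce the whole statement to Proposition~\ref{J} using the two bridges supplied in Proposition~\ref{op to map cocycles} and identity~\eqref{k12 semigps}. For the ``left'' alternative, I would work with the completely bounded process $\flowk{1}$ on $\ket{\init}$ defined by $\flowk{1}_s(\ket{u}) := X_s(\ket{u} \ot I_\Fock)$. Proposition~\ref{op to map cocycles}(a) says that $X$ is a left QS cocycle on $\init$ if and only if $\flowk{1}$ is a QS cocycle on $\ket{\init}$, and from~\eqref{k12 semigps} the reduced maps are $\flowk{1}^{f,g}_s(\ket{u}) = X^{f,g}_s \ket{u}$. By Proposition~\ref{basic mechanism} specialised to $\Hil = \Comp$, bounded operators on $\init$ are in completely isometric bijection with CB maps on $\ket{\init}$, so equality of operators on $\init$ and equality of maps on $\ket{\init}$ coincide. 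Consequently each of the conditions (i)--(iv) of the present statement is exactly the corresponding condition of Proposition~\ref{J} re-expressed through $\ket{\cdot}$; for instance the operator identity $X^{f,g}_{r+t} = X^{f,g}_r X^{s^*_r f, s^*_r g}_t$ is equivalent, after post-composition with $\ket{u}$ and totality of $\{\ket{u} : u \in \init\}$ in $\ket{\init}$, to the mapping identity featured in Proposition~\ref{J}(ii). The semigroup $\opassocsemigp^{x,y}$ on $\init$ corresponds likewise to the semigroup $\mapassocsemigp^{x,y}$ on $\ket{\init}$ given by $\mapassocsemigp^{x,y}_t(\ket{u}) := \opassocsemigp^{x,y}_t \ket{u}$, so the factorisations~\eqref{Opj} translate term-by-term to~\eqref{k semigroup rep} and~\eqref{k semigroup rep2}.

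For the ``right'' alternative, replace $\flowk{1}$ by $\flowk{2}$ on $\bra{\init}$ (taking $Y = X$), and use Proposition~\ref{op to map cocycles}(b) in place of (a), together with the second half of~\eqref{k12 semigps}: $\flowk{2}^{f,g}_s(\bra{u}) = \bra{u} X^{f,g}_s$. The key observation is that composing such right-multiplication maps reverses operator order on $\init$:
\[
\flowk{2}^{f,g}_r \circ \flowk{2}^{s^*_r f, s^*_r g}_t (\bra{u}) = \flowk{2}^{f,g}_r \bigl( \bra{u} X^{s^*_r f, s^*_r g}_t \bigr) = \bra{u} X^{s^*_r f, s^*_r g}_t X^{f,g}_r.
\]
This reversal accounts precisely for the swapped order of the semigroup factors in the ``respectively'' clauses of (ii), (iii) and (iv).

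The main obstacle is really only bookkeeping: one has to check carefully that the order reversal produced by $\bra{\cdot}$ matches the order stated in the right cocycle case, and that the semigroup property on $\init$ transfers faithfully to the corresponding property on $\bra{\init}$ (via Proposition~\ref{basic mechanism}, applied using its second form). Beyond these routine verifications, no new content is needed: the result follows directly from Proposition~\ref{J} applied to $\flowk{1}$ and $\flowk{2}$.
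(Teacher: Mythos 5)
Your proposal is correct and follows exactly the route the paper itself takes: the paper derives this proposition directly from Proposition~\ref{J} via the correspondence of Proposition~\ref{op to map cocycles} and the identities~\eqref{k12 semigps}, with the order reversal in the right-cocycle case arising from composition of the bra-valued maps just as you describe. You have merely written out the bookkeeping that the paper leaves implicit.
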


The adjoint operation on bounded QS processes on a Hilbert space
exchanges left and right QS cocycles. As an immediate corollary of the
above semigroup decomposition/characterisation it follow that the
time-reversal procedure defined in~\eqref{time-reversed} does too.

\begin{cor}\label{cor time-reversal}
Let $X$ be a bounded QS process on $\init$. Then $X$ is a left
\tu{(}respectively right\tu{)} QS cocycle if and only if the
time-reversed process $X^R$ is a right \tu{(}resp.\ left\tu{)}
cocycle.
\end{cor}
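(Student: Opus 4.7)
The plan is to deduce the result from the semigroup decomposition/characterisation~(iv) in the preceding proposition by computing the reduced maps of $X^R$ in terms of those of $X$, and then using the reversibility of the partition appearing in~(iv). No step should be delicate analytically; the main content is bookkeeping around the reversed partition.

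First I would check that $X^R$ is itself a bounded QS process on $\init$. Since $R^\noise_t$ acts as the identity on $\Fock_{[t,\infty[}$, one has $R_t = (I_\init \ot R^\noise_t|_{\Fock_{[0,t[}}) \ot I_{[t,\infty[}$, and combined with adaptedness of $X$ this gives $R_t X_t R_t \in B(\init) \vot B(\Fock_{[0,t[}) \ot I_{[t,\infty[}$. Next, using self-adjointness of $R_t$ and the fact that $\w{0_{[t,\infty[}}$ is $R^\noise_t$-invariant, the identity $R^\noise_t \w{h_{[0,t[}} = \w{(r_t h)_{[0,t[}}$ holds, and therefore $R_t E(h_{[0,t[}) = E((r_t h)_{[0,t[})$. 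It follows that
\[
(X^R)^{f,g}_t = E(f_{[0,t[})^* R_t X_t R_t E(g_{[0,t[}) = X^{r_t f, r_t g}_t.
\]
Evaluating at constant functions $f = x$, $g = y$ (fixed by every $r_t$) shows that $X$ and $X^R$ share the same associated semigroups $\{\opassocsemigp^{x,y}: x,y \in \noise\}$.

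Now suppose $X$ is a left QS cocycle with associated semigroups $\{\opassocsemigp^{x,y}\}$. Given $f \in \Step_\Til$, $g \in \Step_{\Til'}$ and a partition $0 = s_0 \les \cdots \les s_{n+1} = t$ refining the discontinuities of $f_{[0,t[}$ and $g_{[0,t[}$, the reversed partition $t_i := t - s_{n+1-i}$ refines those of $(r_t f)_{[0,t[}$ and $(r_t g)_{[0,t[}$, with $(r_t f)(t_i) = f(s_{n-i})$, $(r_t g)(t_i) = g(s_{n-i})$ and $t_{i+1} - t_i = s_{n+1-i} - s_{n-i}$. Applying the left-cocycle case of characterisation~(iv) to $X$ with these data, combined with the identity from step~2, gives
\[
(X^R)^{f,g}_t = X^{r_t f, r_t g}_t = \opassocsemigp^{x_n,y_n}_{s_{n+1}-s_n} \cdots \opassocsemigp^{x_0,y_0}_{s_1-s_0},
\]
with $x_j := f(s_j)$, $y_j := g(s_j)$. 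This is precisely the right-cocycle form of~(iv) for $X^R$ with the same associated semigroups, so the backward implication in that characterisation forces $X^R$ to be a right cocycle. The reverse direction follows immediately from $R_t^2 = I$, which makes time-reversal involutive: $(X^R)^R = X$, so the same argument with the roles of left and right swapped completes the proof. The only real obstacle is getting the indexing of the reversed partition correct, and this is purely combinatorial.
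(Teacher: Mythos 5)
Your proposal is correct and follows essentially the same route as the paper, which states the corollary as an immediate consequence of the semigroup decomposition/characterisation; you have simply supplied the details that the paper leaves implicit, namely the identity $(X^R)^{f,g}_t = X^{r_t f,\, r_t g}_t$, the invariance of the associated semigroups under time reversal, and the order-reversing correspondence of partitions that converts the left-cocycle product form into the right-cocycle one.
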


\begin{rem}
For a bounded left (respectively, right) QS cocycle $X$ on $\init$,
\[
\wt{X} := (X^R)^* = (X^*)^R
\]
defines another bounded left (resp.\ right) QS cocycle on $\init$,
known as the \ti{dual cocycle} (\cite{Jou}).
\end{rem}

\subsection*{Cocycle dichotomies}

We next give some dichotomies which the cocycle laws entail.

\begin{propn}\label{dichotomy}
Let $k$ be a QS cocycle on $\ov$, let $j$ be a
 QS cocycle on an operator system $\ow$, and let $X$ be a
\tu{(}left or right, bounded\tu{)} operator QS cocycle on $\init$. Then
each of the following sets is either empty or all of
$]0,\infty[$\tu{:}
\begin{alist}
\item
$\{t > 0: k_t \text{ is injective}\}$\tu{;}
\item
$\{t > 0: X_t \text{ is injective}\}$\tu{;}
\item
assuming that $k$ is completely contractive, \\
$\{t > 0: k_t \text{ is completely isometric}\}$\tu{;}
\item
 assuming that $j$ is completely positive and contractive, \\
$\{t > 0: j_t \text{ is unital}\}$\tu{;}
\item
assuming that $X$ is a contraction cocycle,
\begin{alist}
\item
$\{t > 0: X_t \text{ is isometric}\}$\tu{;}
\item
$\{t > 0: X_t \text{ is coisometric}\}$.
\end{alist}
\end{alist}
\end{propn}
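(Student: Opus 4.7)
The plan is to use the cocycle identity in its three forms
\[
k_{r+t} = \wh{k}_r \circ \sigma_r \circ k_t, \quad
X_{r+t} = X_r \sigma_r(X_t) \ \text{(left)}, \quad
X_{r+t} = \sigma_r(X_t) X_r \ \text{(right)},
\]
together with two structural observations: (i) the shift $\sigma_r$ is an injective, unital, completely isometric $*$-homomorphism, being the restriction of $\id_{B(\init; \init')} \vot \sigma^\noise_r$ to $\ov \otm B(\Fock)$; and (ii) the lifting $\wh{k}_r = k_r \otm \id_{B(\Fock_{[r,\infty[})}$ inherits injectivity, complete contractivity, complete isometry, complete positivity, and (in the operator-system case) unitality from $k_r$---the first three are immediate from the coordinate identity $E^e \wh{k}_r(T) E_d = k_r(E^e T E_d)$ of Lemma~\ref{D}, and the latter two follow by $\hil$-w.o.\ continuity of $\wh{k}_r$ (Lemma~\ref{h topology}) applied on the dense subspace $\ov \aot B(\Fock_{[r,\infty[})$, on which the lifting agrees with $k_r \otimes \id$. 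Each dichotomy then follows a two-step pattern: \textbf{(I)} if the property holds at some $t_0 > 0$ it holds at every $s \in (0, t_0]$; \textbf{(II)} it is preserved under the cocycle composition/product. Since any $t > 0$ is a finite sum of elements of $(0, t_0]$, iterating (II) after (I) propagates the property to all of $\,]0,\infty[\,$.

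For step (I), I decompose $t_0 = r + s$ with $r, s > 0$ and examine the relevant factorisation through parameter $s$. Case (a): injectivity of $\wh{k}_r \circ \sigma_r \circ k_s$ forces its first-applied factor $k_s$ to be injective. Case (c): complete contractivity of all three factors gives the squeeze
\[
\norm{A} = \norm{k^{(n)}_{t_0}(A)} \les \norm{k^{(n)}_s(A)} \les \norm{A} \qquad (A \in \Mat_n(\ov), \; n \ges 1),
\]
forcing $k_s$ to be CI. Case (d): since a CPC map on an operator system satisfies $0 \les j_r(I) \les I$ by Proposition~\ref{A}, and likewise for $\wh{j}_r$,
\[
I = j_{t_0}(I) = \wh{j}_r(\sigma_r(j_s(I))) \les \wh{j}_r(I) \les I,
\]
whence $\wh{j}_r(I) = I$ and so $j_r$ is unital. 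Case (b): from $X_{t_0} = X_r \sigma_r(X_s)$ and the fact that $\sigma_r(X_s)$ is unitarily equivalent to $I_{\Fock_{[0,r[}} \ot \wt{X}_s$ with $\wt{X}_s$ unitarily equivalent to $X_s$, injectivity of $X_{t_0}$ forces injectivity of $X_s$. Case (e)(i), left cocycle: using $X_r^* X_r \les I$,
\[
I = X_{t_0}^* X_{t_0} = \sigma_r(X_s^*) X_r^* X_r \sigma_r(X_s) \les \sigma_r(X_s^* X_s) \les I,
\]
and injectivity of $\sigma_r$ yields $X_s^* X_s = I$; the right-cocycle case is symmetric. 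Case (e)(ii) follows from (e)(i) applied to the adjoint $X^*$, which is a right cocycle when $X$ is a left cocycle (and vice versa), interchanging isometry with coisometry.

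Step (II) is routine given the inheritance observations: composing two injective maps is injective, composing two CI maps is CI, composing two unital maps is unital, and the product of two isometries is isometric; the non-$k_s$ (respectively non-$X_s$) factor inherits the relevant property from $k_r$ (respectively $X_r$) via Lemma~\ref{D} or the $*$-homomorphism nature of $\sigma_r$. The main technical obstacle I foresee is observation (ii) for CP and unitality, where Lemma~\ref{D} treats only complete boundedness and one must extend by density using $\hil$-weak operator continuity (Lemma~\ref{h topology}). The bookkeeping in (e) is also delicate, but the adjoint operation on operator cocycles---which exchanges left and right cocycles, and isometries with coisometries---reduces the four operator sub-cases to one squeeze argument per side.
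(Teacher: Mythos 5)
Your proof is correct, and for parts (a)--(d) it is essentially the paper's own argument: the same extraction of the small-time factor from the decomposition $k_{t_0}=\wh{k}_r\circ\sigma_r\circ k_s$, the same norm squeeze for (c), the same order inequality $I=j_{t_0}(I)\les\wh{\jmath}_r(I)\les I$ for (d), and the same forward propagation by iterating the cocycle identity (the paper packages this as $l_u=(\wh{l}_t\circ\sigma_t)^{\circ N}\circ l_{u-Nt}$, which is your step (II) in compressed form). You are also right that positivity of the lifting $\wh{\jmath}_r$ is the one technical point requiring the density/$\hil$-w.o.\ continuity argument; the paper uses this fact but leaves it implicit. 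The only genuine divergence is in part (e). The paper reduces all four operator sub-cases to (e\,ii) for a \emph{left} cocycle, using time-reversal (Corollary~\ref{cor time-reversal}) and the adjoint operation, and then settles that case by passing to the CPC conjugation cocycle $j_t(B)=X_t(B\ot I_\Fock)X_t^*$ on $B(\init)$ from Proposition~\ref{op to map cocycles} and invoking part (d), since coisometry of $X$ is equivalent to unitality of $j$. You instead prove (e\,i) directly via the operator squeeze $I=\sigma_r(X_s)^*X_r^*X_r\sigma_r(X_s)\les\sigma_r(X_s^*X_s)\les I$ together with injectivity of $\sigma_r$, and obtain (e\,ii) by taking adjoints. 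Your route is more self-contained and elementary, avoiding the detour through the mapping cocycle; the paper's route buys economy by reusing (d) and the already-established correspondence between operator and mapping cocycles. Both are sound.
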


\begin{proof}
For $0 \les r \les t \les u$ where $t>0$, and $l = k$ or $j$,
\begin{align}
l_t &= \wh{l}_{t-r} \circ \sigma_{t-r} \circ l_r \label{Adichotomy} \\
&= \wh{l}_r \circ \sigma_r \circ l_{t-r}, \text{ and}
\label{AAdichotomy} \\
l_u &= (\wh{l}_t \circ \sigma_t)^{\circ N} \circ l_{u-Nt},
\label{Bdichotomy}
\end{align}
where $N:= [t^{-1}u]$, so that $0 \les (u-Nt) \les t$. Let $t > 0$.
If $k_t$ is injective then~\eqref{Adichotomy} implies that $k_r$ is
injective for all $r\les t$. If $k$ is CC and $k_t$ is CI
then~\eqref{Adichotomy} implies that
\[
\norm{A} = \norm{k^{(n)}_t (A)} \les \norm{k^{(n)}_r (A)} \les
\norm{A} \quad \text{ for all } n \in \Nat, A \in \Mat_n (\ov), r \les
t
\]
so $k_r$ is CI for all $r \les t$. If $j_t$ is unital
then~\eqref{AAdichotomy} implies that
\[
I = j_t(I) \les \wh{\jmath}_r(I) \les I \quad \text{ for all } r \les
t
\]
so $j_r$ is unital for all $r \les t$. Since injectivity or complete
isometry for $k_t$ (respectively, unitality for $j_t$) implies the
same property for $(\wh{k}_t \circ \sigma_t)$ (resp.\ $(\wh{\jmath}_t
\circ \sigma_t)$), parts (a), (c) and (d) now follow
from~\eqref{Bdichotomy}.

Since $X$ is injective/isometric/coisometric if and only if the
time-reversed process $X^R$ is, and $X$ is isometric (respectively
coisometric) if and only if the adjoint process $X^*$ is coisometric
(resp.\ isometric) it suffices (by Corollary~\ref{cor time-reversal})
to prove~(b) and (e\,ii) in the case that $X$ is a left
cocycle. That~(b) holds follows by a similar argument to parts~(a),
(c) and~(d), noting that for any $R \in B(\init \ot \Fock)$,
\[
\{t \ges 0: \sigma_t(R) \text{ is injective}\} = \emptyset \text{ or }
\Rplus.
\]
Finally, for (e\,ii), let $j_t(B) := X_t (B \ot I_\Fock) X^*_t$, a CPC
cocycle on $B(\init)$ by Proposition~\ref{op to map cocycles}, so that
the result follows from~(d) since $X$ is coisometric if and only if
$j$ is unital.
\end{proof}

\subsection*{Associated $\Ga$-cocycle and global $\Ga$-semigroup}

The associated semigroups $\mapassocsemigp^{x,y}$ and
$\opassocsemigp^{x,y}$ were studied individually in \cite{aunt}, and
when they have bounded generators they can be used to construct a
stochastic generator for the cocycle. Here, following Accardi and
Kozyrev, we show that the entire family can usefully be treated as a
single object.

Recall the Fock-Weyl operators defined in~\eqref{Weyl ops}. Let $\Ga$
be a map $I \To \noise$, for some (index) set $I$. Then, writing
$(\de^\al)_{\al \in I}$ for the usual basis for $l^2 (I)$,
\begin{equation}\label{Weyl matrix}
W^\Ga_t: \de^\al \ot \xi \mapsto \de^\al \ot W \bigl(
\Ga(\al)_{[0,t[} \bigr) \xi, \quad \al \in I, \xi \in \Fock,
\end{equation}
determines a unitary QS process $W^\Ga$ on $l^2(I)$. If $B(l^2(I) \ot
\Fock)$ is identified with $\Mat_I (B(\Fock))_\bd$ then
\[
W^\Ga_t = [\de^\al_\be W \bigl( \Ga(\al)_{[0,t[} \bigr)] \in \Diag_I
(B(\Fock))_\bd.
\]

We consider such processes in three guises. For a total subset $\Til$
of $\noise$ containing $0$, set $W^\Til = W^\Ga$ where $\Ga$ is the
inclusion map $\Til \To \noise$. For an orthonormal basis $(d_i)_{i
\in I_0}$ for $\noise$, letting
\begin{equation}\label{d to e basis}
d_0 = 0 \text{ in } \noise, \quad e_0 = \begin{pmatrix} 1 \\
0 \end{pmatrix} \text{ and } e_i = \begin{pmatrix} 0 \\
d_i \end{pmatrix} \text{ in } \khat, \quad \text{ and } I = I_0 \cup
\{0\},
\end{equation}
so that $\eta = (e_\al)_{\al \in I}$ is an orthonormal basis for
$\khat$, set $W^\eta = W^\Ga$ where $\Ga$ is the map $I \To \noise$,
$\al \To d_\al$. Finally, for $n \ges 1$ and $\bfx \in \Til^n$, set
$W^\bfx = W^\Ga$ where $\Ga$ is the map $\{1,\ldots,n\} \To \noise$,
$i \mapsto x^i$. Thus $W^\Til$ is a process on $l^2(\Til)$; $W^\eta$
is a process on $\khat \cong l^2(I)$ consisting of $\eta$-diagonal
operators:
\[
W^\eta_t \in \Diag_\eta (\khat)_\bd \mot B(\Fock);
\]
and $W^\bfx$ is the process on $\Comp^n$ given by
\begin{equation}
 \label{Wx mat}
 W_t^{\mathbf{x}} :=
\begin{bmatrix} W(x^1_{[0,t[}) & & \\ & \ddots & \\ & &
W(x^n_{[0,t[}) \end{bmatrix} \in \Mat_n (\Comp) \ot B(\Fock).
\end{equation}
The proof of the following is straightforward.

\begin{propn}\label{Wcp}
Let $W^\Ga$ be the unitary QS process on $l^2(I)$ associated with a
map $\Ga:I \To \noise$, as in~\eqref{Weyl matrix}, and let $k$ be a
QS cocycle on an operator space $\ov$ with associated semigroups
$\{\mapassocsemigp^{x,y}: x,y \in \noise\}$.
\begin{alist}
\item
$W^\Ga$ is a strongly continuous left QS cocycle.
\item
$W^\Ga$ is also a right QS cocycle on $l^2(I)$.
\item
Each of the associated semigroups of the cocycle $W^\Ga$ is norm
continuous if and only if the function $\Ga$ is bounded.
\item
The completely bounded QS process on $\ov \otm B(l^2(I)) = \Mat_I
(\ov)_\bd$ defined by
\begin{equation}\label{PertLift}
K^\Ga_t := (I_{\init'} \ot W^\Ga_t)^* k^{l^2(I)}_t (\cdot) (I_\init
\ot W^\Ga_t)
\end{equation}
is a QS cocycle whose expectation semigroup is the Schur-action
semigroup given by
\begin{equation}\label{P Gamma}
\mapassocsemigp^\Ga_t := \bigl[
\mapassocsemigp^{\Ga(\al),\Ga(\be)}_t \bigr]_{\al,\be \in I}.
\end{equation}
\end{alist}
\end{propn}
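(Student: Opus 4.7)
The plan is to verify each part by unpacking definitions and exploiting two recurring tools: the Weyl relation $W(f)W(g) = e^{-i\im\ip{f}{g}}W(f+g)$, which collapses to $W(f+g)$ whenever $f,g$ have disjoint supports in $\Rplus$; and the module property of the $B(\hil)$-lifting $\phi \otm \id_{B(\hil)}$ under conjugation by ampliations of operators in $B(\hil)$.

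For (a) and (b), strong continuity of $W^\Ga$ follows from strong continuity of $t \mapsto W(f_{[0,t[})$ combined with the uniform bound $\norm{W^\Ga_t} = 1$, upgrading convergence from the total set $\{\de^\al \ot \xi\}$ to all of $l^2(I) \ot \Fock$. The left cocycle identity reduces, component by component, to $W(\Ga(\al)_{[0,r[})W(\Ga(\al)_{[r,r+t[}) = W(\Ga(\al)_{[0,r+t[})$ by disjoint-support Weyl calculus; the same disjointness makes the two Weyl operators commute, giving the right cocycle identity as well. For (c), a direct computation using $W(g)\w{y_{[0,t[}} = e^{-it\im\ip{g}{y}}\w{(g+y)_{[0,t[}}$ and $\ip{\w{u_{[0,t[}}}{\w{v_{[0,t[}}} = e^{-t\chi(u,v)}$ shows $\opassocsemigp^{x,y}_t$ is diagonal on $l^2(I)$ with $\al$-entry $\exp(-t\mu^{x,y}_\al)$, where $\mu^{x,y}_\al := \chi(x,\Ga(\al)+y) + i\im\ip{\Ga(\al)}{y}$; since $\mu^{0,0}_\al = \tfrac12\norm{\Ga(\al)}^2$, norm continuity of $\opassocsemigp^{0,0}$ alone forces $\Ga$ bounded, while boundedness of $\Ga$ makes every $\mu^{x,y}$ bounded in $\al$.

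For (d), write $\mathcal{W}_t := I \ot W^\Ga_t$, ampliating on $\init$ or $\init'$ as context dictates. The process $K^\Ga_t$ takes values in $\ov \otm B(l^2(I) \ot \Fock) = \Mat_I(\ov)_\bd \otm B(\Fock)$ (cf.\ Lemma~\ref{C}) since $E^e K^\Ga_t(T) E_d = E^{W^\Ga_t e} k^{l^2(I)}_t(T) E_{W^\Ga_t d} \in \ov$, while complete boundedness and adaptedness are inherited from $k^{l^2(I)}$ and $W^\Ga$. For the cocycle identity, parts~(a) and~(b) give $\mathcal{W}_{r+t} = \mathcal{W}_r \sigma_r(\mathcal{W}_t) = \sigma_r(\mathcal{W}_t)\mathcal{W}_r$, so the cocycle identity for $k^{l^2(I)}$ yields
\[
K^\Ga_{r+t}(T) = \mathcal{W}_r^* \sigma_r(\mathcal{W}_t)^* \wh{k^{l^2(I)}_r}\bigl(\sigma_r(k^{l^2(I)}_t(T))\bigr) \sigma_r(\mathcal{W}_t) \mathcal{W}_r;
\]
the module property of $\wh{k^{l^2(I)}_r}$ lets one pull $\sigma_r(\mathcal{W}_t)$ inside, and the adaptedness of $\mathcal{W}_r$ to $[0,r[$ makes it commute with the $\id_{B(\Fock_{[r,\infty[})}$ factor of $\wh{k^{l^2(I)}_r}$, giving $\wh{K^\Ga_r}(Y) = \mathcal{W}_r^* \wh{k^{l^2(I)}_r}(Y) \mathcal{W}_r$, which collapses the expression to $\wh{K^\Ga_r}(\sigma_r(K^\Ga_t(T)))$. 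For the expectation semigroup, $\mathcal{W}_t E(0)\de^\al = E(\Ga(\al))\de^\al$, so on matrix units $T = a \ot \dyad{\de^i}{\de^j}$ one computes $(K^\Ga_t)^{0,0}(T) = \mapassocsemigp^{\Ga(i),\Ga(j)}_t(a) \ot \dyad{\de^i}{\de^j}$, which is $\mapassocsemigp^\Ga_t(T)$; Schur-linearity and Lemma~\ref{E} extend this to all of $\Mat_I(\ov)_\bd$. The main obstacle is the careful unwinding of tensor identifications needed to justify the module property of $\wh{k^{l^2(I)}_r}$ under conjugation by $\sigma_r(\mathcal{W}_t)$, whose support $B(l^2(I) \ot \Fock_{[r,\infty[})$ spans both the lift's existing $l^2(I)$ factor and its new $\Fock_{[r,\infty[}$ factor.
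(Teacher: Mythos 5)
Your proof is correct and is essentially the argument the paper has in mind: the paper states only that ``the proof of the following is straightforward'' and supplies no details, and your verification via disjoint-support Weyl calculus for (a)--(c) and the lifted cocycle identity plus the computation $\mathcal{W}_t E(0)\de^\al = E(\Ga(\al))\de^\al$ for (d) is the intended one. The one point you rightly flag --- that the conjugating operator $\sigma_r(\mathcal{W}_t)$ acts on both the $l^2(I)$ factor and the lifted $\Fock_{[r,\infty[}$ factor --- is discharged by the associativity of matrix-space liftings (Lemma~\ref{C}), which lets you regard $\wh{k^{l^2(I)}_r}$ as $\wh{k}_r \otm \id_{B(l^2(I))}$ and apply the module property over the full lifted factor $B(l^2(I)\ot\Fock_{[r,\infty[})$.
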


\begin{rems}
(i) Since the cocycle $W^\Ga$ is unitary, and thus contractive, norm
continuity of every associated semigroup follows from norm continuity
of its expectation semigroup~(\cite{aunt}).

(ii) The proposition applies to $W^\Til$ for a subset $\Til$ of
$\noise$, or $W^\eta$ for an orthonormal basis $\eta$ of $\khat$
associated with some basis of $\noise$ as in~\eqref{d to e basis}.
Thus for $\eta = (e_\al)_{\al \in I}$, $K^\eta$ defines a QS cocycle
on $\ov \otm B(\khat)$ whose associated expectation semigroup
$\mapassocsemigp^\eta$ is the semigroup of $\eta$-decomposable maps
with component semigroups $\mapassocsemigp^{(\al,\be)} :=
\mapassocsemigp^{d_\al,d_\be}$, $\al, \be \in I$.

(iii) In part (d) we may take a second function $\Gamma': I' \to
\noise$ and, by using a rectangular lifting of $k$, obtain a
completely bounded QS cocycle on the operator space $\ov \otm
B(l^2(I');l^2(I)) = \Mat_{I,I'} (\ov)_\bd$, with Schur-action expectation
semigroup
\[
\mapassocsemigp^{\Ga,\Ga'} := \Bigl( \bigl[
\mapassocsemigp^{\Ga(\al),\Ga'(\al')}_t \bigr]_{\al \in I, \al' \in
\Ga'} \Bigr)_{t \ges 0}.
\]
\end{rems}

The semigroups in the collection $\{\mapassocsemigp^\bfx: \bfx \in
\bigcup_{n\in \Nat} \Til^n \}$ appearing above and in the following
section are also seen to be the vacuum expectation semigroups of QS
cocycles associated to $k$ through~\eqref{PertLift}
 and~\eqref{P Gamma}.

\section{Characterisation and reconstruction}
\label{CCcocs}

In this section we focus our attention initially on completely
positive contraction cocycles $k$ on an operator system. We derive
additional properties satisfied by the family of associated
semigroups, and, following Accardi and Kozyrev, show that conversely,
for any such family of semigroups on an operator system indexed by a
total subset $\Til$ of $\kil$ containing $0$, there is a cocycle $k$
for which this is its family of associated semigroups. We then apply
Paulsen's $2 \times 2$ matrix trick to extend this characterisation of
completely positive contraction cocycles on an operator system to
completely contractive cocycles on any operator space. Finally a
characterisation of contraction operator cocycles is obtained. Along
the way we derive characterisations of completely positive contraction
cocycles on a $C^*$-algebra and positive contraction operator cocycles
on a Hilbert space.

We will repeatedly make use of Schur products beyond the context of
scalar matrices (as used in the previous section), but only in the
favourable circumstances guaranteed by the following elementary
observation.

\begin{lemma}\label{pos S prod}
Let $\ov$ be an operator system, or $C^*$-algebra, in $B(\hil)$, and
let $A \in \Mat_n(\ov)_+$ and $\la \in \Mat_n(\Comp)_+$ for some $n
\in \Nat$. Then $A \schur \la := [a^i_j \la^i_j] \in \Mat_n(\ov)_+$.
\end{lemma}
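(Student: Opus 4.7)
The plan is to apply the classical Schur product theorem, adapted to matrices over an operator system or $C^*$-algebra: decompose $\la$ as a sum of rank-one positive scalar matrices, and exhibit each corresponding rank-one Schur product with $A$ as a scalar-diagonal congruence, to which positivity transfers by inspection.

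Concretely, since $\la \ges 0$, I would spectrally decompose it as $\la = \sum_{k=1}^n v_k v_k^*$ with $v_k \in \Comp^n$, so that $\la^i_j = \sum_{k} v_k^i \, \ol{v_k^j}$. Setting $D_k := \diag(v_k^1, \ldots, v_k^n) \in \Mat_n(\Comp)$, viewed as a scalar-diagonal element of $\Mat_n(B(\hil))$ via the unital inclusion $\Comp \subset B(\hil)$, a direct computation of entries gives
\[
A \schur \la \, = \, \sum_{k=1}^n D_k A D_k^*,
\]
since $(D_k A D_k^*)^i_j = v_k^i \, a^i_j \, \ol{v_k^j}$.

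It then suffices to show that each summand lies in $\Mat_n(\ov)_+$. Its entries $v_k^i a^i_j \ol{v_k^j}$ lie in $\ov$, so $D_k A D_k^* \in \Mat_n(\ov)$; and as $A \ges 0$ in $B(\hil^n)$ with $D_k \in B(\hil^n)$, we have $D_k A D_k^* \ges 0$ in $B(\hil^n)$. The conclusion then rests on the fact that, for an operator system or $C^*$-algebra $\ov$ in $B(\hil)$, the positive cone of $\Mat_n(\ov)$ is precisely $\Mat_n(\ov) \cap B(\hil^n)_+$ under the identification~\eqref{MnBHK}; this is the definition of the operator-system structure on $\Mat_n(\ov)$ in the first case, and the standard identification of $C^*$-positivity with operator positivity in the faithful representation on $\hil^n$ in the second. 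Summing over $k$ delivers the lemma. There is no real obstacle: the argument is essentially a one-line consequence of the classical Schur product theorem together with inheritance of matrix positivity from ambient operator positivity in the concrete settings in hand.
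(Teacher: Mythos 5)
Your proof is correct and takes essentially the same route as the paper's: the paper sets $\mu = \la^{1/2}$ and verifies $\ip{\bfu}{(A \schur \la)\bfu} = \sum_k \ip{\bfu_{(k)}}{A \bfu_{(k)}} \ges 0$ with $\bfu_{(k)} = (\mu^k_i u^i)_i$, which is exactly the quadratic-form version of your operator identity $A \schur \la = \sum_k D_k A D_k^*$. The only cosmetic difference is that you decompose $\la$ into rank-one positives via its spectral decomposition where the paper uses its positive square root; both reduce the lemma to positivity of congruences of $A$ by scalar diagonals.
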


\begin{proof}
Let $\bfu = (u^i) \in \hil^n$. Then, setting $\mu = \la^{1/2}$ and
$\bfu_{(k)} = (\mu^k_i u^i)_{i=1}^n \in \hil^n$ ($k=1, \ldots, n$),
we have
\[
\ip{\bfu}{(A \schur \la) \bfu} = \sum_k \ip{\bfu_{(k)}}{A \bfu_{(k)}}
\ges 0.\qed
\]
\renewcommand{\qed}{}
\end{proof}

\begin{rems}
(i) Using Schur isometries, a version of this result for infinite
matrices is exploited in~\cite{QS+CB2}.

(ii) For a commutative $C^*$-algebra $\Al$, the Schur product of
positive elements of $\Mat_n (\Al)_+$ is easily seen to be positive by
identifying $\Mat_n (\Al)_+$ with $C(\Sigma; \Mat_n (\Comp)_+)$, where
$\Sigma$ is the spectrum of $\Al$.
\end{rems}

\subsection*{Completely positive contraction cocycles}

Recall the notations $\wxt$ and $\square_n$ introduced
in~\eqref{GramMat} and~\eqref{box}.

\begin{propn}\label{PP}
Let $k$ be a QS cocycle on an operator space $\ov$, let $\Til$ be a
total subset of $\kil$ containing $0$, and consider the family of
semigroups $\{\mapassocsemigp^\bfx: \bfx \in \bigcup_{n\ges 1}
\Til^n\}$ defined from the associated semigroups of $k$.
\begin{alist}
\item
 If $\ov$ is an operator system, or $C^*$-algebra, then
 the following are equivalent\tu{:} \label{CPeq}
\begin{alist}
\item
The cocycle $k$ is completely positive.
\item
Each semigroup $\mapassocsemigp^\bfx$ is completely positive.
\item
Each semigroup $\mapassocsemigp^\bfx$ is positive.
\end{alist}
\item\label{CPCeq}
If $\ov$ is an operator system and $k$ is completely positive then
the following are equivalent\tu{:}
\begin{alist}
\item
$k$ is a contraction cocycle.
\item
$\mapassocsemigp^\bfx_t (I_\init \ot \la) \les I_\init \ot (\wxt
\schur \la)$ for all $n \ges 1$, $\bfx \in \Til^n$, $\la \in \Mat_n
(\Comp)_+$ and $t \ges 0$.
\item
$\mapassocsemigp^\bfx_t (I_\init \ot \square_n) \les I_\init \ot
\wxt$ for all $n \ges 1$, $\bfx \in \Til^n$ and $t \ges 0$.
\end{alist}
Moreover $k$ is unital if and only if equality holds in \tu{(ii)}
\tu{(}resp.\ \tu{(iii))}.
\item
 If $\ov$ is an operator system and $k$ is completely positive and
 contractive, then the following are equivalent\tu{:}
\begin{alist}
\item
$k$ is unital.
\item
Equality holds in \tu{(}b\tu{)}\tu{(}ii\tu{)}.
\item
Equality holds in \tu{(}b\tu{)}\tu{(}iii\tu{)}.
\item
$\mapassocsemigp^{x,x}$ is unital for all $x\in\Til$.
\end{alist}
\end{alist}
\end{propn}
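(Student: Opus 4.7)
The plan rests on two transfer principles: identity~\eqref{knN}, which expresses matrix elements of $k^{(n)}_t$ in terms of $k^{\bff}_t$ for step-function tuples $\bff$; and Proposition~\ref{kft maps}, which factorises $k^\bff_t$ as a composition of Schur-action semigroups $\mapassocsemigp^{\bfx(k)}$. Together these let properties of the cocycle and of its associated semigroups propagate in both directions, and each of the three parts reduces to a careful bookkeeping exercise with them.

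For part~(a), given $k$ CP, a direct sandwich calculation yields $\ip{\bfv}{(\mapassocsemigp^\bfx_t)^{(m)}(A)\bfv} = \ip{\zeta}{k^{(mn)}_t(A)\zeta}$ for $\zeta^{(k,i)} = v^{(k,i)} \ot \w{x^i_{[0,t[}}$, so the left-hand side is nonnegative whenever $A \in \Mat_{mn}(\ov)_+$; this gives (i)$\Rightarrow$(ii), while (ii)$\Rightarrow$(iii) is automatic. For (iii)$\Rightarrow$(i), I would start from $\xi = \sum_p u_p \ot \w{f_p} \in \init \aot \Exps_\Til$, apply~\eqref{knN} to rewrite $\ip{\xi}{k^{(n)}_t(A)\xi} = \ip{\bfu}{k^\bff_t(A \ot \square_N) \bfu}$, factor $k^\bff_t$ into the composition provided by Proposition~\ref{kft maps}, and combine positivity of $A \ot \square_N$ (via Lemma~\ref{pos S prod}) with iterated positivity of the $\mapassocsemigp^{\bfx(k)}$'s, then invoke density.

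For part~(b), the key calculation for (i)$\Rightarrow$(ii) is: writing $\la = \bfv^*\bfv \in \Mat_n(\Comp)_+$, the vector $\xi = \sum_i \ket{v^i} \ot u^i \ot \w{x^i_{[0,t[}} \in \Comp^r \ot \init \ot \Fock$ satisfies $\ip{\bfu}{\mapassocsemigp^\bfx_t(I \ot \la) \bfu} = \ip{\xi}{(I_{\Comp^r} \ot k_t(I)) \xi}$ and $\ip{\bfu}{(I \ot (\wxt \schur \la)) \bfu} = \norm{\xi}^2$, so $k_t(I) \les I$ yields the inequality. Then (ii)$\Rightarrow$(iii) follows by taking $\la = \square_n$, and (iii)$\Rightarrow$(ii) follows by a rank-one decomposition of $\la$ combined with the scalar substitution $\tilde u^i = \bar{w^i} u^i$ for the rank-one piece $\ket{w}\bra{w}$. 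For (ii)$\Rightarrow$(i), I would iterate (ii) along the semigroup decomposition of $k^\bff_t$ to obtain $k^\bff_t(I \ot \square_N) \les I \ot G_{\bff,t}$, where the iterated Schur product collapses, via $\sum_k (s_{k+1}-s_k)\chi(f_p(s_k), f_q(s_k)) = \chi(f_p^{[0,t[}, f_q^{[0,t[})$, to the Gramian of the $[0,t[$-restrictions; specialising to step functions supported in $[0,t[$ and extended by zero then yields $\ip{\xi}{k_t(I)\xi} \les \norm{\xi}^2$ on a dense subspace.

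For part~(c), the equivalences (i)$\Leftrightarrow$(ii)$\Leftrightarrow$(iii) follow by rerunning the part~(b) arguments with equalities throughout. The new ingredient is (iv)$\Rightarrow$(iii): the operator matrix $I \ot \wxt - \mapassocsemigp^\bfx_t(I \ot \square_n)$ equals $\mathbf{E}^*(I - k_t(I))\mathbf{E}$ where $\mathbf{E} = [E(x^i_{[0,t[})]$, hence is positive; its diagonal entries $I - \mapassocsemigp^{x^i,x^i}_t(I)$ vanish by (iv), and the block-matrix characterisation~\eqref{PosMats} then forces the off-diagonal entries to vanish too. The reverse (iii)$\Rightarrow$(iv) is immediate by specialising to $n=1$, where $\wxt = 1 = \square_1$. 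The main technical obstacle I anticipate is the density argument in (ii)$\Rightarrow$(i) (and its equality version): the constant-valued exponentials $\{\w{x_{[0,t[}} : x \in \Til\}$ alone do not span $\Fock_{[0,t[}$, so one must appeal instead to the fuller totality of $\{\w{g} : g \in \Step_\Til^{[0,t[}\}$ guaranteed by the Parthasarathy--Sunder/Skeide theorem cited in Section~\ref{section qsp}.
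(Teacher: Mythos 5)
Your proposal is correct and follows essentially the same route as the paper's proof: the same key identities (the matrix-element reduction~\eqref{knN}, the factorisation of $k^\bff_t$ from Proposition~\ref{kft maps}, the collapse of iterated Schur products of Gramians to $\bigl[\ip{\w{f_p}}{\w{f_q}}\bigr]$, and the exponential-vector sandwiching that the paper packages as $F_{\bfx,t}^*(\,\cdot\,)F_{\bfx,t}$) drive each implication in the same order, and you correctly identify and resolve the density issue via step-function exponentials. The only variations are cosmetic --- your rank-one decomposition of $\la$ in (b)(iii)$\Rightarrow$(ii) is exactly the proof of Lemma~\ref{pos S prod}, which the paper cites instead, and your (c)(iv)$\Rightarrow$(iii) via a positive block matrix with vanishing diagonal is equivalent to the paper's square-root argument with $A=(I-k_t(I))^{1/2}$.
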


\begin{proof}
Suppose that $\ov$ acts on $\init$. We start with three observations.

First note that, for $N \ges 1$, $u^1, \ldots, u^N \in \init$ and
$f_1, \ldots, f_N \in \Step^t_\Til$,
\begin{equation}\label{norm Schur Gram}
 \left\| \sum_{i=1}^N u^i \ot \w{f_i} \right\|^2 =
 \big\langle \bfu, \bigl[
I_\init \ot (\vp^{\bfx(0)}_{t_1-t_0} \schur \ldots \schur
\vp^{\bfx(n)}_{t_{n+1}-t_n}) \bigr] \bfu \big\rangle,
\end{equation}
where $\bfx (k) := \bff (t_k) \in \Til^N$ for $k=1, \ldots, n$,
whenever $\{0 = t_0 \les \cdots \les t_n =t\}$ contains the
discontinuities of each $f_i$.

Secondly,
\begin{equation}
 \label{kCP eqn}
\sum_{i,j=1}^N
 \big\langle u^i \ot \w{f_i}, k_t (a^i_j) u^j \ot \w{f_j}\big\rangle =
 \big\langle\bfu, k^\bff_t ([a^i_j]) \bfu \big\rangle
\end{equation}
where $k^\bff$ is the Schur-action map on $\Mat_N (\ov)$
from~\eqref{kf semigroup rep}.

Thirdly, the isometries defined by $F_{\bfx,t} := (I_\init \ot
W^\bfx_t) E(0)$, $\bfx \in \Til^n$ (in the notation~\eqref{Ef
notation}), satisfy
\begin{equation}\label{kCPC eqn}
F_{\bfx,t}^* (I_\init \ot \la \ot I_\Fock) F_{\bfx,t} = I_\init \ot
(\wxt \schur \la)
\end{equation}
for each $\la \in \Mat_n (\Comp)$.

\medskip
(a) If $k$ is CP then each $\mapassocsemigp^\bfx$ is CP by
Proposition~\ref{Wcp}(d). Conversely, suppose that each
$\mapassocsemigp^\bfx$ is positive. Then, by a reindexing, we see
that each $(\mapassocsemigp_t^{\bfx})^{(n)}$ is of the form
$\mapassocsemigp^{\bfx'}_t$, and so each $\mapassocsemigp^\bfx$ is
actually CP and therefore, by Proposition~\ref{kft maps}, each
$k^\bff_t$ is CP. By adaptedness, the density of $\init \aot
\Exps^t_\Til$ in $\init \ot \Fock^t$ and~\eqref{knN}, it follows
from~\eqref{kCP eqn} that $k$ is CP.

\medskip
(b) Suppose that the conditions of (a) hold and let $\Sigma$ be the
tensor flip $\ov \otm B(\Fock \ot \Comp^n) \to \ov \otm B(\Comp^n \ot
\Fock)$. If $k$ is also contractive then, by Proposition~\ref{Wcp}(d),
\[
\mapassocsemigp^\bfx_t (I_\init \ot \la) = F^*_{\bfx,t} \Sigma(k_t
(I_\init) \ot \la) F_{\bfx,t} \les F^*_{\bfx,t} (I_{\init} \ot \la
\ot I_\Fock) F_{\bfx,t}
\]
for $\la \in \Mat_n (\Comp)_+$, so (ii) holds by~\eqref{kCPC eqn}.

If (iii) holds then for any $\la \in \Mat_n (\Comp)_+$
\[
\mapassocsemigp^\bfx_t (I_\init \ot \la) = \mapassocsemigp^\bfx_t
(I_\init \ot \square_n) \schur \la \les I_\init \ot (\wxt \schur
\la)
\]
by Lemma~\ref{pos S prod}. Thus (ii) holds.

Finally suppose that (ii) holds and let $\xi = \sum^N_{i=1} u^i \ot
\w{f_i}$ where $\bff \in (\Step^t_\Til)^n$ has all its discontinuities
in $\{0 = t_0 \les \cdots \les t_{n+1} = t\}$. Then,
 by~\eqref{kCP eqn},~\eqref{kf semigroup rep} and~\eqref{norm Schur Gram}
\begin{align*}
\ip{\xi}{k_t (I_\init) \xi}
&= \big\langle \bfu, k^\bff_t (I_\init \ot \square_N)\bfu \big\rangle \\
&\les
 \big\langle \bfu, [I_\init \ot (\vp^{\bfx (0)}_{t_1 - t_0} \schur \cdots
\schur \vp^{\bfx (m)}_{t_{n+1} - t_n})] \bfu \big\rangle =
\norm{\xi}^2.
\end{align*}
Thus, by complete positivity, $k$ is a contraction cocycle.

Since (iii) is a special case of (ii), this proves the equivalences.

\medskip
(c)
 Tracing back through the proof of (b) confirms the equivalence
 of (i), (ii) and (iii).
  In case $n=1$, (iii) reads $\mapassocsemigp^{x,x}_t(I_\init) = I_\init$
  for all $x\in\Til$ and $t\in\Rplus$,
  so (iii) implies (iv).
  Suppose finally that (iv) holds.
  Then, for $x,y\in\Til$ and $t\in\Rplus$, setting
 \[
  A = (I_{\init\ot\Fock} - k_t(I_\init))^{1/2}, \quad
  X = AE_{\vp(x_{[0,t[})} \ \text{ and } \ Y = AE_{\vp(y_{[0,t[})},
 \]
 we have $X^*X = I_\init - \mapassocsemigp^{x,x}_t(I_\init) = 0$ so $X=0$,
 and so
 $E^{\vp(y_{[0,t[})} A^2 E_{\vp(x_{[0,t[})} = Y^*X = 0$.
 By the density of $\Exps_\Til$ it follows that $A^2=0$, in other
 words $k$ is unital and so (i) holds.
\end{proof}

If $f_1, \ldots, f_n$ are distinct vectors in a Hilbert space, $u_1,
\ldots, u_n$ are any vectors from another Hilbert space and $\sum
u_i \ot \w{f_i} = 0$, then $u_1 = \cdots = u_n =0$. This
straightforward extension of the well-known linear independence of
exponential vectors implies the following result, in which we adopt
the notations
\[
 \Exps_{\Til, t} :=
 \Lin \big\{\w{f}: f \in \Step_{\Til, t} \big\}
 \text{ where } \Step_{\Til, t} :=
 \big\{f \in \Step_\Til: \supp f \subset [0,t[ \big\}.
 \]
\begin{lemma}
 \label{SLM}
For a Hilbert space $\init$, a subset $\Til$ of $\noise$, a vector
space $U$ and a function $\psi: (\init \times \Step_{\Til, t})
\times (\init \times \Step_{\Til, t}) \To U$, if each function
$(u,v) \mapsto \psi \bigl( (u,f),(v,g) \bigr)$ is sesquilinear
$\init \times \init \To U$ then there is a unique sesquilinear map
$\Psi: (\init \aot \Exps_{\Til, t}) \times (\init \aot \Exps_{\Til,
t}) \To U$ satisfying
\[
\Psi (u \ot \w{f}, v \ot \w{g}) = \psi ((u,f),(v,g)).
\]
\end{lemma}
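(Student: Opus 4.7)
The plan is to exploit the linear independence of exponential vectors (noted just before the lemma statement) to put each element of $\init \aot \Exps_{\Til, t}$ into an essentially unique canonical form, and then simply read off the definition of $\Psi$ from that form. By linear independence, for any finite set $F \subset \Step_{\Til, t}$, the vectors $\{u \ot \w{f} : u \in \init, f \in F\}$ satisfy the following: if $\sum_{f \in F} u_f \ot \w{f} = 0$ then every $u_f = 0$. Hence, given $\xi \in \init \aot \Exps_{\Til, t}$ together with a finite $F \subset \Step_{\Til, t}$ large enough that $\xi \in \init \aot \Lin\{\w{f} : f \in F\}$, there are \emph{uniquely determined} $(u_f)_{f \in F}$ in $\init$ with $\xi = \sum_{f \in F} u_f \ot \w{f}$.

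Given $\xi, \eta \in \init \aot \Exps_{\Til, t}$, I would choose a single finite $F$ accommodating both, write $\xi = \sum_{f \in F} u_f \ot \w{f}$ and $\eta = \sum_{f \in F} v_f \ot \w{f}$, and define
\[
\Psi(\xi, \eta) := \sum_{f, g \in F} \psi\bigl((u_f, f), (v_g, g)\bigr).
\]
Independence of the choice of $F$ is the first thing to check: enlarging $F$ to $F'$ simply introduces coefficients $u_f = 0$ or $v_g = 0$, and the sesquilinearity hypothesis on $\psi$ gives $\psi((0, f), (v, g)) = 0 = \psi((u, f), (0, g))$, so the enlarged sum agrees with the original.

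Sesquilinearity of $\Psi$ is then immediate: for fixed $\eta$, represent a linear combination $\alpha \xi_1 + \be \xi_2$ using a common $F$ serving $\xi_1, \xi_2, \eta$, observe that its $F$-coefficients are $\alpha u_f^{(1)} + \be u_f^{(2)}$, and apply linearity of $\psi$ in its first argument (conjugate-linearity is handled analogously in the second slot). Agreement on simple tensors follows by taking $F = \{f, g\}$ (or $\{f\}$ when $f = g$) and noting that the three cross-terms involving a zero coefficient vanish, leaving exactly $\psi((u, f), (v, g))$. Uniqueness of $\Psi$ requires no argument: any sesquilinear map is determined by its values on the set $\{u \ot \w{f} : u \in \init, f \in \Step_{\Til, t}\}$, which spans $\init \aot \Exps_{\Til, t}$. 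The only step demanding any care is the well-definedness, and that is exactly what the preamble linear-independence remark is tailored to deliver.
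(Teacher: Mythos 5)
Your proof is correct and follows exactly the route the paper intends: the paper gives no written proof of this lemma, merely observing that the stated extension of the linear independence of exponential vectors ``implies the following result,'' and your argument is precisely the routine verification (unique coefficients over a finite $F$, invariance under enlarging $F$, sesquilinearity, and uniqueness via spanning) that the authors leave to the reader.
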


We now begin the task of reconstruction of cocycles from classes of
semigroups.

\begin{thm}\label{Q}
Let $\ov$ be an operator system and let $\mathcal{S} :=
\{\mapassocsemigp^{x,y}: x,y \in \Til\}$ be a family of semigroups
on $\ov$ indexed by a total subset $\Til$ of $\kil$ containing $0$.
Suppose that $\mathcal{S}$ satisfies the following conditions\tu{:}
\begin{alist}
\item
Each semigroup $\mapassocsemigp^\bfx$ is positive, and
\item
$\mapassocsemigp^\bfx_t (I_\init \ot \square_n) \les I_\init \ot
\wxt$ for all $n \ges 1$, $\bfx \in \Til^n$ and $t \ges 0$.
\end{alist}
Then there is a unique completely positive contraction cocycle $k$
on $\ov$ whose associated semigroups include $\mathcal{S}$.
Moreover, $k$ is unital if and only if each inequality in \tu{(b)}
is an equality.
\end{thm}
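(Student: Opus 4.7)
The plan is to invert the analysis of Proposition~\ref{PP}: the family $\mathcal{S}$ prescribes exactly the reduced maps $k^{f,g}_t$ on step function exponentials, and I would use these to construct $k_t(a)$ as a sesquilinear form on $\init \aot \Exps_{\Til, t}$ which is then shown to define an adapted CP contraction in $\ov \otm B(\Fock)$. Concretely, for each $t \ges 0$, $N \ges 1$ and $\bff \in (\Step_{\Til, t})^N$, I would first define $k^\bff_t: \Mat_N(\ov) \to \Mat_N(\ov)$ by the Schur-action composition formula~\eqref{kf semigroup rep2}, choosing any refinement $\{0 = s_0 \les \cdots \les s_{n+1} = t\}$ containing all discontinuities of $\bff$; independence of the refinement follows from the semigroup laws for the $\mapassocsemigp^\bfx$. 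The reindexing argument used in the proof of Proposition~\ref{PP}(a) upgrades the assumed positivity of each $\mapassocsemigp^\bfx$ to complete positivity of each $\mapassocsemigp^\bfx$, and hence of each $k^\bff_t$.

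The second step is the key operator estimate. For $\la \in \Mat_n(\Comp)_+$, Schur-action gives $\mapassocsemigp^\bfx_t (I_\init \ot \la) = \mapassocsemigp^\bfx_t (I_\init \ot \square_n) \schur \la$, so condition (b) and Lemma~\ref{pos S prod} yield $\mapassocsemigp^\bfx_t (I_\init \ot \la) \les I_\init \ot (\wxt \schur \la)$. Composing through the decomposition~\eqref{kf semigroup rep} and iterating this bound gives
\[
k^\bff_t (I_\init \ot \square_N) \les I_\init \ot (\vp^{\bfx(0)}_{t_1 - t_0} \schur \cdots \schur \vp^{\bfx(n)}_{t_{n+1} - t_n}),
\]
whose right hand side is precisely the Gramian appearing in~\eqref{norm Schur Gram}.

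Next, for $a \in \ov_+$ with $\norm{a} \les 1$, define a sesquilinear form on $\init \aot \Exps_{\Til, t}$ by the prescription $(u \ot \w{f}, v \ot \w{g}) \mapsto \ip{u}{k^{f,g}_t(a) v}$; well-definedness as a sesquilinear map is ensured by Lemma~\ref{SLM}. For $\xi = \sum_i u^i \ot \w{f_i}$, choose a common refinement and read off
\[
0 \les \ip{\xi}{\Phi_a \xi} = \bip{\bfu}{k^\bff_t (a \ot \square_N)}{\bfu} \les \bip{\bfu}{k^\bff_t (I_\init \ot \square_N)}{\bfu} \les \norm{\xi}^2,
\]
using complete positivity for the first inequality and the estimate above for the second. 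Thus $\Phi_a$ extends by density to a positive contraction on $\init \ot \Fock_{[0,t[}$, and I would ampliate by $I_{[t,\infty[}$ to obtain $k_t(a) \in B(\init \ot \Fock)_{+,1}$; real linear extension gives $k_t$ on all of $\ov$. The main obstacle is precisely this passage from the formal prescription on step functions to a bona fide bounded operator, and the argument hinges on the fact that hypothesis (b), though stated only for $\square_n$, propagates via Schur-action to an inequality for arbitrary positive scalar matrices.

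Finally I would verify the outstanding properties. Adaptedness to $[0,t[$ is built into the ampliation, and for each normal functional $\om$ on $B(\Fock_{[0,t[})$ the slice $(\id \vot \om)(k_t(a))$ lies in the norm closure of the span of the $k^{f,g}_t(a) \in \ov$, so Lemma~\ref{slice} places $k_t(a)$ in $\ov \otm B(\Fock)$. Complete positivity of $k_t: \ov \to \ov \otm B(\Fock)$ comes via identity~\eqref{knN} applied to the CP maps $k^\bff_t$. Contractivity is already noted. The cocycle identity $k_{r+t}^{f,g} = k_r^{f,g} \circ k_t^{s_r^* f, s_r^* g}$ is immediate from the way $k^{f,g}_t$ was built out of the semigroup blocks, so Proposition~\ref{J} gives the cocycle property. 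Uniqueness is Corollary~\ref{cor: uniqueness}, and the unitality equivalence follows directly from Proposition~\ref{PP}(c).
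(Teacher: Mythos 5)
Your proposal is correct and follows essentially the same route as the paper's proof: define the reduced maps by the semigroup composition formula, upgrade hypothesis (b) to the Schur-product inequality for arbitrary positive scalar matrices via Lemma~\ref{pos S prod}, iterate that bound to control the quadratic form built with Lemma~\ref{SLM}, extend to bounded positive operators using $\ov = \Lin \ov_+$, and then invoke Lemma~\ref{slice}, Proposition~\ref{J}, Proposition~\ref{PP} and Corollary~\ref{cor: uniqueness} for the remaining properties. The only differences are cosmetic (working with the matrix maps $k^\bff_t$ throughout rather than assembling them at the estimate stage, and normalising $\norm{a}\les 1$).
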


\begin{proof}
In this proof we identify elements of $\Step_{\Til, t}$ with their
restrictions to $[0,t[$.

Suppose that $\ov$ acts on $\init$. First note that the proof
(b\,iii $\Implies$ b\,ii) in Proposition~\ref{PP} applies to
$\mathcal{S}$, thus we may assume that this family satisfies
\begin{enumerate}
\item[(b)$'$]
$\mapassocsemigp^\bfx_t (I_\init \ot \la) \les I_\init \ot (\wxt
\schur \la)$ for all $n \ges 1$, $\bfx \in \Til^n$, $\la \in \Mat_n
(\Comp)_+$ and $t \ges 0$.
\end{enumerate}
For each $t > 0$ and $f,g \in \Step_{\Til, t}$ define a bounded map
$k(f,g,t)$ on $\ov$ by
\[
k(f,g,t) = \mapassocsemigp^{x_0,y_0}_{t_1-t_0} \circ \cdots \circ
\mapassocsemigp^{x_n,y_n}_{t_{n+1}-t_n}
\]
where $\{0 = t_0 \les \cdots \les t_{n+1} = t\}$ contains the
discontinuities of $f$ and $g$, and $(x_i,y_i) = (f(t_i),g(t_i))$.
That these maps are well-defined, that is independent of the choice
of subdivision of $[0,t[$, is a consequence of the semigroup
property of each $\mapassocsemigp^{x,y}$.

Now, for each $(u,v) \in \init \times \init$, the map
\[
\ov \To \Comp, \, \quad a \mapsto \big\langle u, k(f,g,t) (a) v
\big\rangle
\]
defines a bounded linear functional on $\ov$. Thus Lemma~\ref{SLM}
implies the existence of a sesquilinear map
 $(\init \aot \Exps_{\Til, t}) \times (\init \aot \Exps_{\Til, t})
 \To \ov^*$,
denoted $(\xi,\eta) \mapsto k_t [\xi,\eta]$, satisfying
\[
k_t [\xi,\eta] (a) = \sum_{i,j=1}^N \big\langle u^i, k(f_i,g_j,t)
(a) v^j \big\rangle
\]
for $\xi = \sum_{i=1}^N u^i \ot \w{f_i}$ and $\eta = \sum_{j=1}^N
v^j \ot \w{g_j}$ in $\init \aot \Exps_{\Til, t}$. In particular, if
$a \in \ov_+$ then letting $\bfx (k) = \bff (t_k) \in \Til^N$ for
$k=0, \ldots, n$, where $\{0 = t_0 \les \cdots \les t_{n+1} = t\}$
contains the discontinuities of $\bff$, and using the inequality $a
\ot \square_N \les \norm{a} I_\init \ot \square_N$,~\eqref{norm
Schur Gram} implies that
\begin{align} \label{form contract}
k_t [\xi, \xi] (a) & =
 \big\langle \bfu, \mapassocsemigp^{\bfx (0)}_{t_1-t_0} \circ \cdots
\circ \mapassocsemigp^{\bfx (n)}_{t_{n+1}-t_n} (a \ot \square_n) \bfu \big\rangle \\
&\les \norm{a} \big\langle \bfu, \bigl[ I_\init \ot
(\vp^{\bfx(0)}_{t_1-t_0} \schur \cdots \schur \vp^{\bfx
(n)}_{t_{n+1}-t_n}) \bigr] \bfu \big\rangle = \norm{a} \norm{\xi}^2.
\notag
\end{align}
Since $\ov = \Lin \ov_+$ this implies that, for any $a \in \ov$, the
quadratic form $\xi \mapsto k_t [\xi, \xi](a)$ is bounded. Therefore
there is a bounded operator $k(t,a)$ on $\init \ot \Fock_{[0,t[}$
satisfying $\ip{\xi}{k(t,a) \xi} = k_t [\xi, \xi] (a)$. Moreover,
by~\eqref{form contract},
\begin{equation}\label{pos contract}
k(t,a) \ges 0 \text{ for } a \in \ov_+.
\end{equation}
By the linearity of each $k_t [\xi, \xi]$, $k(t,a)$ is linear in $a$
and so $k_t (a) = k(t,a) \ot I_{[t,\infty [}$ defines an adapted
family of linear maps $k_t: \ov \To B(\init \ot \Fock)$.
By~\eqref{pos contract} each $k_t$ is positive and so also bounded
(by Proposition~\ref{A}). Since, for $f,g \in \Step_\Til$,
\[
k^{f,g}_t = k(f|_{[0,t[}, g|_{[0,t[}, t) =
\mapassocsemigp^{x_0-y_0}_{t_1-t_0} \circ \cdots \circ
\mapassocsemigp^{x_n-y_n}_{t_{n+1}-t_n},
\]
where $(x_l,y_l) := (f(t_l),g(t_l))$ for $l=1, \ldots, n$ and
$\{0=t_0 \les \cdots \les t_{n+1} = t\}$ contains the
discontinuities of both $f_{[0,t[}$ and $g_{[0,t[}$, $k$ is a
process on $\ov$ (Lemma~\ref{slice}); it is therefore a bounded
positive QS cocycle on $\ov$ whose associated semigroups include
$\mathcal{S}$, by Proposition~\ref{J}. Complete positivity and
contractivity of $k$ now follow from Proposition~\ref{PP}.
Uniqueness follows from Corollary~\ref{cor: uniqueness}. The last
part is contained in Proposition~\ref{PP}.
\end{proof}

In~\cite{version1} the characterisation of CP unital QS cocycles on
$B(\init)$ with one-dimensional $\noise$ is given in terms of a
single Schur-action CP semigroup on $\Mat_2 (B(\init))$, rather than
a family $\mathcal{S} = \{\mapassocsemigp^{x,y}: x,y \in \Til\}$ of
semigroups on $\ov$ as above. We next show two ways in which such a
global characterisation can be obtained for cocycles on operator
systems with multidimensional noise; the first requires a
separability assumption.
 Recall the definition of $\mapassocsemigp^\Til$ for a QS cocycle and total subset
 $\Til$ of $\noise$ containing $0$ (Proposition~\ref{Wcp}).

\begin{thm}
Let $\noise$ be separable, let $\Til$ be a countable total subset of
$\noise$ that contains $0$ and let $\mapassocsemigp$ be a semigroup
on $\Mat_\Til (\ov)_\bd$ for an operator system $\ov$. Then
$\mapassocsemigp$ is of the form $\mapassocsemigp^\Til$ for some
completely positive contraction cocycle if and only if
\begin{alist}
\item
$\mapassocsemigp$ has Schur-action,
\item
 $\mapassocsemigp$ is completely positive, and
\item
for some $\zeta = (\zeta^x) \in l^2 (\Til)$ with $\zeta^x \neq 0$ for
each $x \in \Til$ we have
\begin{equation}\label{Glob ineq}
\mapassocsemigp_t (I_\init \ot \La) \les I_\init \ot (\wTt \schur
\La) \text{ for all } t \ges 0
\end{equation}
where $\La = \dyad{\zeta}{\zeta} \in B(l^2(\Til)) =
\Mat_\Til(\Comp)_\bd$.
\end{alist}
In this case~\eqref{Glob ineq} holds for all $\La \in B(l^2(\Til))_+$.
\end{thm}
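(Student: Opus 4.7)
For the forward direction, if $\mapassocsemigp = \mapassocsemigp^\Til$ for a completely positive contraction cocycle $k$, then (a) is immediate from~\eqref{P Gamma}, and (b) follows from Proposition~\ref{Wcp}(d) because $\mapassocsemigp^\Til$ is the vacuum expectation semigroup of the completely positive lifted cocycle $K^\Til$. For (c), apply Proposition~\ref{PP}(b)(ii) to the tuple $\bfx$ enumerating any finite $F \subset \Til$ with $\la = \dyad{\zeta|_F}{\zeta|_F}$; this yields exactly the $F$-block compression of the desired inequality. Since a bounded operator on $\init \ot l^2(\Til)$ is $\ges 0$ iff every finite-dimensional compression is, letting $F$ range over all finite subsets of $\Til$ delivers~(c).

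For the converse, the plan is to reduce to Theorem~\ref{Q} via a Schur-action argument, with the crux being a scalar-diagonal conjugation trick. Assume (a), (b) and (c): property (a) supplies component semigroups $\mapassocsemigp^{x,y}$ on $\ov$ (the semigroup property for each component being inherited from $\mapassocsemigp$), and (b) ensures positivity of every $\mapassocsemigp^\bfx$ for finite $\bfx \in \Til^n$. To apply Theorem~\ref{Q} it remains only to deduce the inequality $\mapassocsemigp^\bfx_t(I_\init \ot \square_n) \les I_\init \ot \wxt$. First take $\bfx$ with distinct entries $F := \{x^1, \ldots, x^n\}$ and set $\zeta' := (\zeta^{x^i})_{i=1}^n \in \Comp^n$. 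Compressing (c) to the $F$-block gives
\[
\mapassocsemigp^\bfx_t(I_\init \ot \dyad{\zeta'}{\zeta'}) \les I_\init \ot (\wxt \schur \dyad{\zeta'}{\zeta'}).
\]
Since $\zeta^{x^i} \neq 0$ for each $i$, the scalar diagonal matrix $D := \diag(\zeta^{x^1}, \ldots, \zeta^{x^n})$ is invertible, and one checks the algebraic identities $\dyad{\zeta'}{\zeta'} = D\square_n D^*$ and $\wxt \schur (DAD^*) = D(\wxt \schur A)D^*$ for every $A \in \Mat_n(\Comp)$. Combining these with the Schur-action identity $\mapassocsemigp^\bfx_t(I_\init \ot DAD^*) = (I_\init \ot D)\mapassocsemigp^\bfx_t(I_\init \ot A)(I_\init \ot D^*)$, conjugation of the compressed inequality by $I_\init \ot D^{-1}$ on the left and its adjoint on the right yields $\mapassocsemigp^\bfx_t(I_\init \ot \square_n) \les I_\init \ot \wxt$. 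For $\bfx$ with repeated entries, enumerating its distinct values as $\bfy \in \Til^m$ with redundancy map $\pi: \{1,\ldots,n\} \to \{1,\ldots,m\}$ satisfying $x^i = y^{\pi(i)}$, the duplication matrix $V \in \Mat_{m,n}(\Comp)$ defined by $V_{j,i} := \delta_{j,\pi(i)}$ satisfies $V^* \square_m V = \square_n$, $V^* \wyt V = \wxt$ and $V^* \mapassocsemigp^\bfy_t(A) V = \mapassocsemigp^\bfx_t(V^* A V)$, reducing the general case to the distinct-entry one. Theorem~\ref{Q} then produces a CPC cocycle $k$ on $\ov$, and Corollary~\ref{cor: uniqueness} ensures that its Schur lift $\mapassocsemigp^\Til$ coincides with the given $\mapassocsemigp$.

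For the final assertion, for the $k$ just constructed Proposition~\ref{PP}(b)(ii) supplies $\mapassocsemigp^\bfx_t(I_\init \ot \la) \les I_\init \ot (\wxt \schur \la)$ for every finite $\bfx$ and every $\la \in \Mat_n(\Comp)_+$. Given $\La \in B(l^2(\Til))_+$ and finite $F \subset \Til$ with enumeration $\bfx$, the $F$-block compression of $\mapassocsemigp_t(I_\init \ot \La) - I_\init \ot (\wTt \schur \La)$ equals $\mapassocsemigp^\bfx_t(I_\init \ot P_F\La P_F) - I_\init \ot (\wxt \schur P_F\La P_F)$, which is $\les 0$ by the previous inequality applied to $\la = P_F\La P_F \in \Mat_n(\Comp)_+$; the claimed inequality then follows by the same compression criterion used in the forward direction. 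I expect the main obstacle to lie in the diagonal-conjugation step: converting the single rank-one inequality (c) into the full family of $\square_n$ inequalities required by Theorem~\ref{Q} relies crucially on the invertibility of $D$, which is precisely what the hypothesis $\zeta^x \neq 0$ for every $x \in \Til$ supplies.
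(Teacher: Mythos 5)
Your proposal is correct and follows essentially the same route as the paper: both reduce the converse to Theorem~\ref{Q} by extracting the component semigroups via Schur-action, obtaining positivity of each $\mapassocsemigp^\bfx$ from complete positivity by compression, and then upgrading the single rank-one inequality in (c) to $\mapassocsemigp^\bfx_t(I_\init\ot\square_n)\les I_\init\ot\wxt$. Your diagonal-conjugation step is exactly the paper's Schur-multiplication by the (positive, rank-one) Schur inverse $\dyad{\mu^\bfx}{\mu^\bfx}$ with $\mu^\bfx=(1/\zeta^{x_i})$, since Schur multiplication by $\dyad{\mu}{\mu}$ is conjugation by $\diag(\mu)$; the only cosmetic difference is your explicit duplication-matrix treatment of repeated indices, which the paper absorbs into the compression by $G_\bfx$.
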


\begin{rem}
The statement above already illustrates one issue arising with the
passage to multidimensions: in general $\wTt := [\ip{\w{x_{[0,t[}}}
{\w{y_{[0,t[}}}]_{x,y \in \Til}$ will not define an element of
$B(l^2(\Til))$. However, it is a Schur multiplier, with the map $\La
\mapsto \wTt \schur \La$ being CP and unital on $B(l^2(\Til))$;
indeed
\[
\wTt \schur \La := F^*_{\Til,t} (I_\init \ot \La \ot I_\Fock) F_{\Til,t}
\]
where $F_{\Til,t} := (I_\init \ot W^\Til_t) E(0)$ (cf.\ the earlier
$F_{\bfx,t}$ notation).
\end{rem}

\begin{proof}
That~\eqref{Glob ineq} holds when $\mapassocsemigp$ is of the form
$\mapassocsemigp^\Til$, for some completely positive contraction
cocycle, follows by the same argument as in the proof of
Proposition~\ref{PP}. Assume, conversely, that $\mapassocsemigp$ has
Schur-action and~\eqref{Glob ineq} holds for $\La$ of the given
form. Then $\mapassocsemigp$ has components
$[\mapassocsemigp^{x,y}_t]_{x,y \in \Til}$ as defined
through~\eqref{cpt maps}. Let $n \ges 1$ and $\bfx \in \Til^n$, set
$\mapassocsemigp^\bfx_t = [\mapassocsemigp^{x_i,x_j}_t]: \Mat_n(\ov)
\To \Mat_n(\ov)$, and note that
\[
\mapassocsemigp^\bfx_t (A) = G_\bfx^* \mapassocsemigp^{(n)}_t
(G_\bfx A G^*_\bfx) G_\bfx
\]
where $G_\bfx = \diag [G_1 \cdots G_n]: \init^n \To (\init \ot
l^2(\Til))^n$ is the diagonal operator with $G_i = E_y$ for $y =
\de^{x_i}$. Thus $\mapassocsemigp^\bfx_t$ is a positive map.
Moreover, setting $\la^\bfx := (\zeta^{x_i}) \in \Comp^n$
\begin{align*}
\mapassocsemigp^\bfx_t (I_\init \ot \square_n) \schur
\dyad{\la^\bfx}{\la^\bfx} &=
\mapassocsemigp^\bfx_t (I_\init \ot \dyad{\la^\bfx}{\la^\bfx}) \\
&= G^*_\bfx \bigl(\mapassocsemigp_t (I_\init \ot
\dyad{\zeta}{\zeta}) \ot \square_n
\bigr) G_\bfx \\
&\les G^*_\bfx \bigl( I_\init \ot (\wTt \schur \dyad{\zeta}{\zeta})
\ot \square_n \bigr) G_\bfx \\
&= (I_\init \ot \wxt) \schur \dyad{\la^\bfx}{\la^\bfx}.
\end{align*}
Setting $\mu^\bfx := (1/\zeta^{x_i}) \in \Comp^n$, then
$\dyad{\mu^\bfx}{\mu^\bfx} \in B(\Comp^n)_+ = \Mat_n (\Comp)_+$ is
the Schur inverse of $\dyad{\la^\bfx}{\la^\bfx}$, and so
\[
\mapassocsemigp^\bfx_t (I_\init \ot \square_n) \les I_\init \ot \wxt
\]
by Lemma~\ref{pos S prod}. Thus Theorem~\ref{Q} applies giving the
existence of a cocycle $k$ with global semigroup $\mapassocsemigp$.
\end{proof}

\begin{rem}
If $\Til \subset \noise$ is uncountable then, for any $\La \in
B(l^2(\Til)) = \Mat_\Til (\Comp)_\bd$, only countably many components
of $\La$ in each row and column can be nonzero, and therefore many
finite submatrices will not be Schur-invertible.
%To extend the above
%to nonseparable noise dimension space $\kil$, $\mapassocsemigp_t(I_\init \ot
%\square)$ may instead be defined as a sesquilinear form with domain
%$\{\bfu \in \bigoplus_{x \in \Til} \init: u^x \neq 0 \text{ for only
%finitely many } x\}$.
% TOOOO recherche!
\end{rem}

Specialising to the case of unital cocycles we next give conditions
that guarantee the Schur-action of the global semigroup.

\begin{thm}\label{R}
Let $\mapassocsemigp$ be a semigroup on $\Mat_\Til (\ov)_\bd$ for an
operator system $\ov$ and a total subset $\Til$ of $\kil$ containing
$0$. Then $\mapassocsemigp$ is of the form $\mapassocsemigp^\Til$
for some completely positive unital QS cocycle on $\ov$ if and only
if $\mapassocsemigp$ is completely positive, contractive and
satisfies the normalisation conditions
\begin{equation}\label{normalisation}
\mapassocsemigp_t (I_\init \ot \dyad{\de^x}{\de^y}) =
e^{-t\chi(x,y)} I_\init \ot \dyad{\de^x}{\de^y}, \quad x,y \in \Til,
t \ges 0.
\end{equation}
\end{thm}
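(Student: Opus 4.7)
The forward direction is a direct computation. If $\mapassocsemigp = \mapassocsemigp^\Til$ arises from a CP unital QS cocycle $k$, then Proposition~\ref{Wcp}(d) identifies $\mapassocsemigp$ as the vacuum expectation semigroup of the perturbed cocycle $K^\Til_t(\cdot) = (W^\Til_t)^* k^{l^2(\Til)}_t(\cdot) W^\Til_t$, which is manifestly CP; since $k_t(I) = I$ and $W^\Til$ is unitary, $K^\Til_t(I) = I$, so $\mapassocsemigp_t$ is CP and unital, hence contractive. The normalisation then follows by combining the Schur-action of $\mapassocsemigp^\Til$ with the explicit calculation
\[
\mapassocsemigp^{x,y}_t(I_\init) = E(x_{[0,t[})^* k_t(I_\init) E(y_{[0,t[}) = \ip{\vp(x_{[0,t[})}{\vp(y_{[0,t[})} I_\init = e^{-t\chi(x,y)} I_\init.
\]

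The real work is the converse. The first step is to extract Schur-action from the diagonal ($y=x$) part of the normalisation: writing $p_x := I_\init \otimes \dyad{\de^x}{\de^x}$ and using $\chi(x,x) = 0$, we get $\mapassocsemigp_t(p_x) = p_x$ for every $x \in \Til$ and $t \ges 0$. Since $\mapassocsemigp_t$ is a CP contraction, Proposition~\ref{F}(b) then immediately yields both that $\mapassocsemigp_t$ is unital and that it has Schur-action; denote the component semigroups by $\{\mapassocsemigp^{x,y}: x,y\in\Til\}$, and note that reading the full normalisation through~\eqref{cpt maps} gives $\mapassocsemigp^{x,y}_t(I_\init) = e^{-t\chi(x,y)} I_\init$.

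The second step is to feed the family $\{\mapassocsemigp^{x,y}\}$ into Theorem~\ref{Q}. Positivity (indeed complete positivity) of each $\mapassocsemigp^\bfx$, for $\bfx \in \Til^n$, follows from CP of $\mapassocsemigp$ via the compression identity
\[
\mapassocsemigp^\bfx_t(A) = G_\bfx^* \mapassocsemigp^{(n)}_t(G_\bfx A G_\bfx^*) G_\bfx, \quad G_\bfx := \diag[E_{\de^{x_1}}, \ldots, E_{\de^{x_n}}],
\]
already used in the proof of the preceding theorem. The normalisation then produces \emph{equality} in Theorem~\ref{Q}(b):
\[
\mapassocsemigp^\bfx_t(I_\init \otimes \square_n) = [\mapassocsemigp^{x_i,x_j}_t(I_\init)]_{i,j} = [e^{-t\chi(x_i,x_j)} I_\init]_{i,j} = I_\init \otimes \wxt.
\]
Theorem~\ref{Q}, together with its concluding clause on unitality, produces a (unique) CP unital QS cocycle $k$ on $\ov$ whose associated semigroups include the given $\mapassocsemigp^{x,y}$.

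It remains to check $\mapassocsemigp^\Til = \mapassocsemigp$. Both semigroups act on $\Mat_\Til(\ov)_\bd$ with Schur-action (the former by Proposition~\ref{Wcp}(d), the latter by the first step) and they share the same components $\mapassocsemigp^{x,y}$, so equality is automatic. The one step that is not mere bookkeeping is the reduction to Schur-action: once that has been recognised as a consequence of Proposition~\ref{F} applied to the diagonal normalisation at $y=x$, the remainder is a bridge back to Theorem~\ref{Q}. I expect this conceptual reduction, rather than any intricate estimate, to be the only genuine hurdle.
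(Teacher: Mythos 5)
Your proof is correct and follows essentially the same route as the paper: diagonal normalisation plus Proposition~\ref{F}(b) to get Schur-action and unitality, then positivity of each $\mapassocsemigp^\bfx$ by compression/reindexing, and equality in Theorem~\ref{Q}(b) to conclude. The only difference is one of detail — you spell out the compression identity $\mapassocsemigp^\bfx_t(A) = G_\bfx^* \mapassocsemigp^{(n)}_t(G_\bfx A G_\bfx^*) G_\bfx$ where the paper merely cites ``a standard reindexing argument'', and you make the final identification $\mapassocsemigp^\Til=\mapassocsemigp$ explicit.
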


\begin{proof}
If $\mapassocsemigp=\mapassocsemigp^\Til$ for a CP unital QS cocycle
on $\ov$ then it is clear from the definition that $\mapassocsemigp$
is CP and satisfies
\[
\mapassocsemigp_t (I_\init \ot \dyad{\de^x}{\de^y}) =
\ip{\w{x_{[0,t[}}}{\w{y_{[0,t[}}} I_\init \ot \dyad{\de^x}{\de^y},
\]
which equals the right hand side of~\eqref{normalisation}.

Suppose conversely that $\mapassocsemigp$ is a CP semigroup on
$\Mat_\Til (\ov)$ satisfying~\eqref{normalisation}. Then
$\mapassocsemigp_t (I_\init \ot \dyad{\de^x}{\de^x})= I_\init \ot
\dyad{\de^x}{\de^x}$ for each $x \in \Til$ and so, by
Proposition~\ref{F}, $\mapassocsemigp_t$ has Schur-action.
Positivity of each $\mapassocsemigp^\bfx$ follows by a standard
reindexing argument, and each $\mapassocsemigp^\bfx$ is easily seen
to satisfy the conditions of Theorem~\ref{Q}(b) with equality, so
the proof is complete.
\end{proof}

\begin{rem}
The single normalisation condition cited in Theorem~21
of~\cite{version1} is not sufficient to guarantee that the semigroup
$\mapassocsemigp$ has Schur-action. For example, let $\init = \noise
= \Comp$, so that $\ov = \Comp$, and let $\Til = \{0,1\}$. Let
$\Phi$ be the completely positive map on $\Mat_2 (\Comp)$ defined by
$\Phi \bigl( \bigl[\begin{smallmatrix} a & b \\ c & d
\end{smallmatrix}\bigr] \bigr) = \bigl[\begin{smallmatrix} d & 0 \\
0 & a \end{smallmatrix}\bigr]$, and define $\Psi$ by $\Psi (A) =
\Phi (A) - A$. Then $\Psi$ is the generator of a unital completely
positive semigroup $\mapassocsemigp$ on $\Mat_2 (\Comp)$
(\cite{GKS}) satisfying
\[
\Psi \left(\begin{bmatrix} a & b \\ c & d \end{bmatrix}\right) =
\begin{bmatrix} d-a & -b \\ -c & a-d \end{bmatrix} \ \text{ and } \
\mapassocsemigp_t \left(\begin{bmatrix} 1 & 1 \\ 1 & 1
\end{bmatrix}\right) =
\begin{bmatrix} 1 & e^{-t/2} \\ e^{-t/2} & 1 \end{bmatrix}.
\]
This semigroup satisfies the normalisation condition
of~\cite{version1} (modified for the use of normalised exponential
vectors) but clearly does not have Schur-action and so cannot be the
global semigroup of a QS cocycle.
\end{rem}

\subsection*{Cocycles on a $C^*$-algebra}

Let $R$ and $T$ be self-adjoint operators on a Hilbert space $\init$.
Denote the set $\{S \in B(\init): R \les S \les T\}$ by $[R,T]$, and
for a $C^*$-algebra $\Al$ acting on $\init$ define
\[
[R,T]_\Al := [R,T] \cap \Al.
\]

\begin{thm}
Let $\Cil$ be a nonunital $C^*$-algebra acting nondegenerately, let
$\Til$ be a total subset of $\noise$ containing $0$, and let
$\mathcal{S} = \{\mapassocsemigp^{x,y}: x,y \in \Til\}$ be a family
of semigroups on $\Cil$. Then there is a completely positive
contraction cocycle on $\Cil$ whose associated semigroups include
$\mathcal{S}$ if and only if the family $\mathcal{S}$ satisfies
\begin{equation}\label{nonuni}
\mapassocsemigp^\bfx_t \bigl([0, I_\init \ot \la]_{\Mat_n
(\Cil)}\bigr) \subset \bigl[0, I_\init \ot (\la \schur
\wxt)\bigr]_{\Mat_n (\Cil)}
\end{equation}
for all $n \ges 1$, $\bfx \in \Til^n, \la \in \Mat_n (\Comp)_+$ and $t
\ges 0$.
\end{thm}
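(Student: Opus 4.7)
My plan is to handle the two directions by rather different methods: \emph{necessity} via a pointwise unitisation of each $k_t$, and \emph{sufficiency} by a direct sesquilinear-form construction modelled on the proof of Theorem~\ref{Q}, with hypothesis~\eqref{nonuni} playing the role there taken by~\eqref{Glob ineq}.

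For \emph{necessity}, I would use Proposition~\ref{Ua}. Since each $k_t$ is a CP contraction on $\Cil$, it extends to a CP unital map $\widehat{k}_t : \uCil \to B(\init \ot \Fock)$; this is a pointwise extension and need not be a cocycle. For any $\la \in \Mat_n(\Comp)$, unitality yields $\widehat{k}_t^{(n)}(I_\init \ot \la) = I_{\init \ot \Fock} \ot \la$, so for $A \in [0, I_\init \ot \la]_{\Mat_n(\Cil)}$ complete positivity of $\widehat{k}_t^{(n)}$ gives
\[
k_t^{(n)}(A) = \widehat{k}_t^{(n)}(A) \les I_{\init \ot \Fock} \ot \la.
\]
Setting $\xi_\bfx := (u^i \ot \w{x^i_{[0,t[}})_{i=1}^n$ for $\bfu \in \init^n$ and using the identity $\ip{\bfu}{\mapassocsemigp^\bfx_t(A)\bfu} = \ip{\xi_\bfx}{k_t^{(n)}(A)\xi_\bfx}$ together with the Grammian identity $\ip{\xi_\bfx}{(I_{\init \ot \Fock} \ot \la) \xi_\bfx} = \ip{\bfu}{(I_\init \ot (\la \schur \wxt)) \bfu}$ yields the required upper bound, while positivity of $\mapassocsemigp^\bfx_t(A)$ comes from Proposition~\ref{PP}(a) applied in the $C^*$-algebra case.

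For \emph{sufficiency}, I would follow the template of Theorem~\ref{Q}. Define $k(f,g,t): \Cil \to \Cil$ as the composition $\mapassocsemigp^{x_0,y_0}_{t_1-t_0} \circ \cdots \circ \mapassocsemigp^{x_n,y_n}_{t_{n+1}-t_n}$; each factor is bounded since~\eqref{nonuni} with $n=1$, $\la = 1$ forces each $\mapassocsemigp^{x,x}$ to be a positive contraction, and a dilation argument using~\eqref{PosMats} applied to the $2 \times 2$ Schur-action then bounds $\mapassocsemigp^{x,y}$ for $x \neq y$. Lemma~\ref{SLM} produces sesquilinear maps $k_t[\cdot,\cdot]$ with values in $\Cil^*$. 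The key bound $k_t[\xi,\xi](a) \les \|a\|\|\xi\|^2$ for $a \in \Cil_+$ comes from the iterated estimate
\[
\mapassocsemigp^{\bfx(0)}_{t_1-t_0} \circ \cdots \circ \mapassocsemigp^{\bfx(n)}_{t_{n+1}-t_n}(a \ot \square_N) \les \|a\|\, I_\init \ot \bigl(\vp^{\bfx(0)}_{t_1-t_0} \schur \cdots \schur \vp^{\bfx(n)}_{t_{n+1}-t_n}\bigr),
\]
which I establish by starting from $a \ot \square_N \in [0, \|a\|\,I_\init \ot \square_N]_{\Mat_N(\Cil)}$ (valid since $\|a\| I_\init - a \ges 0$ and $\square_N \ges 0$), then applying~\eqref{nonuni} with $\la = \|a\|\square_N$ so that the output lies in $[0, \|a\|\, I_\init \ot \vp^{\bfx(n)}_{t_{n+1}-t_n}]$ by the identity $\square_N \schur \vp = \vp$; successive applications update $\la$ by Schur-multiplication with the current Grammian. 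Pairing with $\bfu$ and invoking~\eqref{norm Schur Gram} then yields the estimate. Consequently a bounded operator $k(t,a) \in B(\init \ot \Fock_{[0,t[})$ is defined, positive whenever $a \ges 0$; its ampliation gives $k_t : \Cil \to B(\init \ot \Fock)$. Adaptedness and the Feller condition $k_t(\Cil) \subset \Cil \otm B(\Fock)$ follow from Lemma~\ref{slice} combined with $E^{\w{g}} k_t(a) E_{\w{f}} = k(f,g,t)(a) \in \Cil$. The cocycle identity and complete positivity follow from Propositions~\ref{J} and~\ref{PP}(a), and uniqueness from Corollary~\ref{cor: uniqueness}.

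The main obstacle, and where hypothesis~\eqref{nonuni} earns its keep, lies in the sufficiency step: without $I_\init$ in $\Cil$ we cannot imitate the unital estimate of Theorem~\ref{Q} directly, and an apparently natural alternative---extending each $\mapassocsemigp^{x,y}$ to $\uCil$ and reducing to the operator-system case---is in fact not shorter, since positivity of the extended semigroups is essentially equivalent to~\eqref{nonuni} and requires the same iterative argument to verify. Hence the direct form construction described above, with condition~\eqref{nonuni} iterated along the semigroup decomposition, is the most economical route.
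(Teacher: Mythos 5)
Your proof is correct. In the sufficiency direction it is exactly the paper's argument: the authors state only that the converse ``follows that of Theorem~\ref{Q}, except that now the property $k_t[\xi,\xi](a) \les \norm{a}\norm{\xi}^2$ follows from a careful iteration of~\eqref{nonuni}'', and the iteration you describe --- feeding $a \ot \square_N \in [0, \norm{a}\, I_\init \ot \square_N]_{\Mat_N(\Cil)}$ through the semigroup decomposition, updating $\la$ at each stage by Schur multiplication with the successive Grammians and using that $\square_N$ is the Schur unit --- is precisely the detail the paper omits, with contractivity of the resulting cocycle recovered from Proposition~\ref{Ua} just as you indicate (your $2 \times 2$ argument for boundedness of the off-diagonal semigroups is also a necessary supplement in the nonunital setting, where Proposition~\ref{A} is not directly available). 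Where you genuinely diverge is in necessity: the paper extends each $k_t$ to $j_t : \uCil \To B(\init \ot \Fock)$, $a + zI_\init \mapsto k_t(a) + zI_{\init\ot\Fock}$, checks via the identity $j^{f,g}_t(a + zI_\init) = k^{f,g}_t(a) + \ip{\w{f_{[0,t[}}}{\w{g_{[0,t[}}}\, zI_{\init\ot\Fock}$ and Proposition~\ref{J} that $(j_t)$ is itself a CP \emph{unital} QS cocycle on the operator system $\uCil$, and then obtains~\eqref{nonuni} by applying Proposition~\ref{PP} wholesale to $j$ (positivity of $\mathcal{R}^\bfx_t$ plus the equality case of part (b)(ii)). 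You instead use only the pointwise unitisation $\wh{k}_t$, which need not be a cocycle, and compress $k_t^{(n)}(A) \les I_{\init\ot\Fock} \ot \la$ by the exponential vectors $\xi_\bfx$. Both routes are sound: the paper's buys the inequality as an instance of an already-proved global statement at the cost of verifying the cocycle identity for the unitised family, while yours avoids that verification at the cost of redoing, in a short direct computation, the Grammian compression that Proposition~\ref{PP} packages.
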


\begin{proof}
Let $\init$ be the Hilbert space on which $\Cil$ acts.

First let $k$ be a completely positive contraction cocycle on
$\Cil$. For each $t$ let $j_t$ be the extension of $k_t$
 to the unitisation of $\Cil$ in $B(\init)$ defined by
\[
j_t: \uCil \To B(\init \ot \Fock), \quad a +zI_\init \mapsto k_t (a)
+zI_{\init \ot \Fock}.
\]
Thus (by Proposition~\ref{Ua}) $j_t$ is CP and unital, and since
\[
j^{f,g}_t (a+zI_\init)= k^{f,g}_t (a) +
\ip{\w{f_{[0,t[}}}{\w{g_{[0,t[}}} zI_{\init \ot \Fock},
\]
it follows from Proposition~\ref{J} that $j$ is a QS cocycle on
$\uCil$. Let $\{\mathcal{R}^{x,y}: x,y \in \kil\}$ be its family of
associated semigroups. Then, for $\la \in \Mat_n (\Comp)_+$, $A \in
[0,I_\init \ot \la]_{\Mat_n(\Cil)}$ and $\bfx \in \Til^n$,
\[
0 \les \mapassocsemigp^\bfx_t (A) = \mathcal{R}^\bfx_t (A) \les
\mathcal{R}^\bfx_t (I_\init \ot \la) = I_\init \ot (\la \schur \wxt)
\]
by Proposition~\ref{PP}, as required.

The proof of the converse follows that of Theorem~\ref{Q}, except that
now the property $k_t[\xi,\xi](a) \les \norm{a} \norm{\xi}^2$ follows
from a careful iteration of~\eqref{nonuni}. Contractivity of the
resulting cocycle follows from Proposition~\ref{Ua}.
\end{proof}

\subsection*{Positive contraction operator cocycles}

We now apply the results of this section to the case of positive
contraction operator cocycles.
 This class has been studied in~\cite{jtl} under the assumption of
 \emph{Markov regularity} --- that is, norm-continuity of its
 expectation semigroup, and in~\cite{ccr},
 under the assumption of \ti{locality},
 or being a \emph{pure noise} cocycle
 --- that is,
 $X_t \in I_\init \ot B(\Fock_{[0,t[}) \ot I_{[t,\infty[}$
 for all $t\in\Rplus$.

\begin{thm}\label{W}
Let $\Til$ be a total subset of $\noise$ containing $0$ and let $S =
\{\opassocsemigp^{x,y}: x,y \in \Til\}$ be a family of semigroups on
a Hilbert space $\init$. Then there is a positive contraction
operator cocycle on $\init$ whose associated semigroups include $S$
if and only if $S$ satisfies the following conditions\tu{:}
\begin{alist}
\item\label{Qcomm}
 the family of operators
$\{\opassocsemigp^{x,y}_t: x,y \in \Til, t \ges 0\}$ is commutative,
 and
\item\label{Qineq}
$0 \les \opassocsemigp^\bfx_t \les I_\init \ot \wxt$ for all $n \ges
1$, $\bfx \in \Til^n$, and $t \ges 0$.
\end{alist}
\end{thm}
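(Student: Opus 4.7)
\medskip

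The plan is to follow closely the pattern of Theorem~\ref{Q} and Proposition~\ref{PP}, but working throughout with Schur products of operator matrices with commuting entries, which are positivity-preserving by the remark following Lemma~\ref{pos S prod} (via the commutative Gelfand picture).

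\textbf{Necessity.} Suppose $X$ is a positive contraction operator cocycle. Taking adjoints in the (say left) cocycle identity $X_{r+t} = X_r \sigma_r(X_t)$ and using $X_t^* = X_t$ shows that $X$ is simultaneously a left and a right cocycle, so the two semigroup decompositions in the proposition preceding Corollary~\ref{cor time-reversal} must coincide. Applied to a step function pair with a single discontinuity at $s \in ]0,t[$ this yields
\[
\opassocsemigp^{x,y}_s \opassocsemigp^{x',y'}_{t-s} = \opassocsemigp^{x',y'}_{t-s} \opassocsemigp^{x,y}_s \quad (x,x',y,y' \in \Til,\ 0 \les s \les t),
\]
which is~(a). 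For~(b), given $\bfx \in \Til^n$ and $\bfu \in \init^n$, set $\xi := \sum_i u^i \ot \w{x^i_{[0,t[}} \in \init \ot \Fock$. The Grammian calculation $\norm{\xi}^2 = \langle \bfu, (I_\init \ot \wxt) \bfu \rangle$ and the identity $\langle \xi, X_t \xi \rangle = \langle \bfu, \opassocsemigp^\bfx_t \bfu \rangle$ (in $B(\init^n) = \Mat_n(B(\init))$) combined with $0 \les X_t \les I$ give~(b).

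\textbf{Sufficiency.} Assume~(a) and~(b). For step functions $f,g \in \Step_{\Til,t}$, define
\[
X(f,g,t) := \opassocsemigp^{f(t_0),g(t_0)}_{t_1-t_0} \cdots \opassocsemigp^{f(t_n),g(t_n)}_{t_{n+1}-t_n},
\]
where $\{0 = t_0 \les \cdots \les t_{n+1} = t\}$ contains the discontinuities of $f$ and $g$. Commutativity~(a) together with the semigroup property of each $\opassocsemigp^{x,y}$ renders $X(f,g,t)$ independent of the subdivision, and, for $u,v \in \init$, the map $(u,f,v,g) \mapsto \langle u, X(f,g,t) v\rangle$ is sesquilinear in $(u,v)$, so Lemma~\ref{SLM} produces a sesquilinear form $q_t$ on $\init \aot \Exps_{\Til,t}$ with
\[
q_t\bigl(\ts{\sum_i} u^i \ot \w{f_i}, \ts{\sum_j} v^j \ot \w{g_j}\bigr) = \ts{\sum_{i,j}} \langle u^i, X(f_i,g_j,t) v^j \rangle.
\]

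\textbf{Positivity and contractivity of the form.} Fix $\xi = \sum_{i=1}^N u^i \ot \w{f_i}$ and a common refining subdivision $\{0 = t_0 \les \cdots \les t_{n+1} = t\}$ containing all discontinuities of $\bff = (f_1,\ldots,f_N)$, and write $\bfx_k := \bff(t_k) \in \Til^N$. Then the $N \times N$ matrix $M \in \Mat_N(B(\init))$ with entries $X(f_i,f_j,t)$ satisfies
\[
M = \opassocsemigp^{\bfx_0}_{t_1-t_0} \schur \cdots \schur \opassocsemigp^{\bfx_n}_{t_{n+1}-t_n},
\]
where $\schur$ is entry-wise multiplication; this is the central observation, and it requires precisely the commutativity hypothesis~(a) for the entries to multiply unambiguously. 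By~(a), every entry of every factor lies in a single commutative $C^*$-subalgebra $\Al \subset B(\init)$, so Lemma~\ref{pos S prod} (in the commutative form noted in the remark) applies to matrices over $\Al$. By~(b), each factor satisfies $0 \les \opassocsemigp^{\bfx_k}_{t_{k+1}-t_k} \les I_\init \ot \vp^{\bfx_k}_{t_{k+1}-t_k}$ in $\Mat_N(\Al)$, hence iterating the commutative Schur product theorem yields
\[
0 \les M \les (I_\init \ot \vp^{\bfx_0}_{t_1-t_0}) \schur \cdots \schur (I_\init \ot \vp^{\bfx_n}_{t_{n+1}-t_n}) = I_\init \ot \wxt,
\]
using~\eqref{norm Schur Gram} for the final equality. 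Applied against $\bfu = (u^i)$ this gives $0 \les q_t(\xi,\xi) \les \norm{\xi}^2$.

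\textbf{Construction of the cocycle.} The form $q_t$ therefore extends continuously to $\init \ot \Fock_{[0,t[}$ and determines a positive contraction $X^0_t$ there, which we ampliate trivially to $X_t \in B(\init \ot \Fock)$. Adaptedness is automatic from the construction, and by design $X^{x,y}_t = \opassocsemigp^{x,y}_t$ for constant $x,y \in \Til$. The semigroup decomposition of $X^{f,g}_t$ splits at any intermediate time $r \in [0,t]$ as $X^{f|_{[0,r[},g|_{[0,r[}}_r \cdot X^{s_r^* f, s_r^* g}_{t-r}$, which by the same manipulation as in the proof of Proposition~\ref{J} (run backwards) yields the left cocycle identity $X_{r+t} = X_r \sigma_r(X_t)$; by positivity, $X$ is also a right cocycle. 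Thus $X$ is the required positive contraction operator cocycle.

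\textbf{Main obstacle.} The delicate point is the Schur-product calculus: the entries of the matrices $\opassocsemigp^{\bfx_k}_{t_{k+1}-t_k}$ generally do not commute across different indices $(i,j)$ in the absence of hypothesis~(a), so the interpretation of the pointwise product $M$ as a legitimate matrix identity, and in particular the application of the commutative Schur product theorem to transfer the order inequalities of~(b) through the iterated Schur product, is precisely where~(a) is used in an essential way.
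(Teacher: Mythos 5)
Your proof is correct, but in the sufficiency direction it takes a genuinely different (more hands-on) route than the paper. The paper's proof is a reduction: it forms the $C^*$-algebra $\Cil := C^*\{\opassocsemigp^{x,y}_t\}$, notes that (a) (together with the adjoint relation $\opassocsemigp^{y,x}_t = (\opassocsemigp^{x,y}_t)^*$ forced by (b) at $n=2$) makes $\Cil$ unital and abelian, defines the mapping semigroups $\mapassocsemigp^{x,y}_t(a) := \opassocsemigp^{x,y}_t\, a$ on $\Cil$, checks the hypotheses of Theorem~\ref{Q} using the same commutative Schur-product remark you invoke, and then simply sets $X_t := k_t(I_\init)$ for the resulting CPC cocycle $k$; necessity is likewise routed through the CPC cocycle $k_t(a) = X_t(a \ot I_\Fock)$ and Proposition~\ref{PP}. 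You instead re-run the quadratic-form construction of Theorem~\ref{Q} directly at the operator level, with the matrix $[X(f_i,f_j,t)]$ exhibited as an iterated Schur product of the $\opassocsemigp^{\bfx_k}$, and a direct Grammian computation for necessity of (b). Both arguments turn on the same two pivots — self-adjointness making $X$ a two-sided cocycle (whence (a)), and the commutative Schur product theorem transferring the order inequalities of (b) — so the paper's version buys brevity by reusing Theorem~\ref{Q} wholesale, while yours is more self-contained at the cost of duplicating its form-boundedness argument. Two small points to tidy: you should note explicitly that (b) with $n=2$ forces the family to be closed under adjoints, since commutativity of a family alone does not make the generated $C^*$-algebra abelian (the paper is equally terse here); and in your Schur-product display the final matrix is the Grammian $[\ip{\w{f_i}}{\w{f_j}}]$ of the step functions rather than $\wxt$ for a constant $\bfx$ — the inequality $0 \les q_t(\xi,\xi) \les \norm{\xi}^2$ you draw from it is nevertheless exactly right.
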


\begin{proof}
Define
 $\Cil := C^* \{\opassocsemigp^{x,y}_t: x,y \in \Til, t \ges 0\}$.

Let $X$ be a positive contraction cocycle on $\init$ with associated
semigroups $\{\opassocsemigp^{x,y}_t: x,y \in \kil\}$. By
self-adjointness $X$ is also a right cocycle and so, by the
semigroup decomposition~\eqref{Opj}, it follows that~\eqref{Qcomm}
holds. The completely bounded QS process on $\Cil $ defined by $k_t
(a) = X_t (a \ot I_\Fock) = (a \ot I_\Fock) X_t$ is a completely
positive contraction cocycle whose associated semigroups are given
by $\mapassocsemigp^{x,y}_t (a) = \opassocsemigp^{x,y}_t a$.
Therefore $\opassocsemigp^\bfx_t = \mapassocsemigp^\bfx_t (I_\init
\ot \square_n)$ for $n \ges 1$ and $\bfx \in \kil^n$, and
so~\eqref{Qineq} holds by Proposition~\ref{PP}.

Conversely, suppose that $S$ satisfies~\eqref{Qcomm}
and~\eqref{Qineq}. Then $\Cil$ is unital and abelian and
$\mapassocsemigp^{x,y}_t (a) := \opassocsemigp^{x,y}_t a$ defines a
family of semigroups $\mathcal{S} = \{\mapassocsemigp^{x,y}: x,y \in
\Til\}$ on $\Cil$ satisfying $\mapassocsemigp^\bfx_t (I_\init \ot
\square_n) \les I_\init \ot \wxt$. Since $\mapassocsemigp^\bfx_t (A)
= \opassocsemigp^\bfx_t \schur A$, it follows from the remark after
Lemma~\ref{pos S prod} that the semigroups $\mapassocsemigp^\bfx$
are positive. Thus Theorem~\ref{Q} implies that there is a
completely positive contraction cocycle $k$ on $\Cil$ whose
associated semigroups include $\mathcal{S}$; the associated
semigroups of the positive contraction operator cocycle on $\init$
defined by $X_t =k_t (I_\init)$ therefore include $S$.
\end{proof}

\subsection*{Completely contractive quantum stochastic cocycles}

We now apply Paulsen's $2 \times 2$ matrix trick to deal with
completely contractive cocycles on operator spaces.

Set
\begin{align*}
\Mat_n (\Comp)_{++} &:= \{\lambda \in \Mat_n (\Comp): \lambda \text{
is uniformly positive}\} \\
& \: =
 \Mat_n (\Comp)_+ \cap GL_n(\Comp).
\end{align*}

\begin{lemma}\label{pre S}
If $\la \in \Mat_n (\Comp)_{++}$ then
$\la \schur \wxt \in \Mat_n (\Comp)_{++}$ for each $\bfx$ in $\kil^n$.
\end{lemma}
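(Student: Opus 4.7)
The plan is to exploit two elementary observations about the Grammian matrix $\wxt = [e^{-t\chi(x^i,x^j)}]$ and reduce to Lemma~\ref{pos S prod} (i.e., the Schur product theorem for scalars). First I would note that since $\chi(u,u) = \tfrac{1}{2}(\|u\|^2+\|u\|^2) - \|u\|^2 = 0$, the diagonal entries of $\wxt$ are all equal to $1$. Consequently, for any scalar $c$,
\[
(cI_n) \schur \wxt = cI_n.
\]

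Next, since $\la \in \Mat_n(\Comp)_{++}$, there exists $c > 0$ such that $\la \geq cI_n$; writing $\mu := \la - cI_n \in \Mat_n(\Comp)_+$ gives the decomposition $\la = cI_n + \mu$. Applying Schur multiplication by $\wxt$ (which itself lies in $\Mat_n(\Comp)_+$, being the Grammian of the vectors $\w{x^i_{[0,t[}}$, as recorded in~\eqref{GramMat}) yields
\[
\la \schur \wxt = cI_n + \mu \schur \wxt.
\]
By Lemma~\ref{pos S prod} applied with $\ov = \Comp$, $\mu \schur \wxt \in \Mat_n(\Comp)_+$. Hence
\[
\la \schur \wxt \ges cI_n,
\]
so $\la \schur \wxt$ is uniformly positive, as required.

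There is no real obstacle here; the only subtle point to flag is the normalisation of the diagonal of $\wxt$, which is precisely what makes the scalar shift $cI_n$ survive the Schur product unchanged and thereby transfer the uniform positivity of $\la$ to that of $\la \schur \wxt$. This is exactly the feature that the use of \emph{normalised} exponential vectors (as opposed to the unnormalised ones employed in earlier papers) was designed to ensure.
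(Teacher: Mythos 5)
Your proof is correct and is essentially the paper's own argument: the paper writes $\la \ges cI \Rightarrow \la \schur \wxt \ges cI \schur \wxt = cI$, which is exactly your decomposition $\la = cI_n + \mu$ combined with the Schur product theorem and the observation that $\wxt$ has unit diagonal. You have merely spelled out the two ingredients (positivity of $\mu \schur \wxt$ via Lemma~\ref{pos S prod}, and $\chi(x,x)=0$) that the paper leaves implicit.
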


\begin{proof}
If $\la \ges c I$ then, since $\wxt \ges 0$, we have $\la \schur \wxt
\ges c I \schur \wxt = c I$. The result follows.
\end{proof}

\begin{thm}\label{S}
Let $\ov$ be an operator space, let $\Til$ be a total subset of
$\noise$ containing $0$, and let $\mathcal{S} =
\{\mapassocsemigp^{x,y}: x,y \in \Til\}$ be a family of semigroups
on $\ov$. Then the following are equivalent\tu{:}
\begin{rlist}
\item\label{kCC}
There is a completely contractive QS cocycle on $\ov$ whose
associated semigroups include $\mathcal{S}$.
\item\label{Ptxpos}
Each semigroup $\wt{\mapassocsemigp}^\bfx$ \tu{(}constructed from
$\mathcal{S}$ by \eqref{u}\tu{)} is positive.
\item\label{PCCineq}
For all $n \ges 1$, $\bfx \in \Til^n$, $\la, \mu \in \Mat_n
(\Comp)_{++}$, $t \ges 0$ and $A \in \Mat_n(\ov)$,
\begin{equation}\label{LPM}
\norm{(\la \schur \wxt)^{-1/2} \mapassocsemigp^\bfx_t (\la^{1/2} A
\mu^{1/2}) (\mu \schur \wxt)^{-1/2}} \les \norm{A}.
\end{equation}
\end{rlist}
\end{thm}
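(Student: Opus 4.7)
The strategy is to reduce to the operator-system case via Paulsen's $2 \times 2$ matrix trick (Corollary~\ref{B}) and then apply Proposition~\ref{PP} and Theorem~\ref{Q}. The key preliminary observation is that~\eqref{t} and~\eqref{u} identify the associated semigroups of $\wt{k}$ as $\{\wt{\mapassocsemigp}^{x,y}\}$ whenever $k$ is a cocycle on $\ov$ with associated semigroups $\{\mapassocsemigp^{x,y}\}$. With this identification, \tu{(i)}$\Rightarrow$\tu{(ii)} is immediate: if $j$ is a CC cocycle on $\ov$ whose associated semigroups include $\mathcal{S}$, then $\wt{j}$ is a CP cocycle on $\wt{\ov}$ by Corollary~\ref{B} (and unital by construction, hence CPC by Proposition~\ref{A}), so Proposition~\ref{PP}\tu{(a)} applied to the operator system $\wt{\ov}$ yields complete positivity---hence positivity---of each $\wt{\mapassocsemigp}^\bfx$.

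For \tu{(ii)}$\Rightarrow$\tu{(i)}: From~\eqref{u} with $\la = \mu = \square_n$ and $A = B = 0$ one computes $\wt{\mapassocsemigp}^\bfx_t(I \ot \square_n) = I \ot (\square_n \schur \wxt) = I \ot \wxt$, so Theorem~\ref{Q} applies to $\wt{\mathcal{S}} := \{\wt{\mapassocsemigp}^{x,y}\}$ with equality throughout its condition~(b), producing a CP unital cocycle $\wt{k}$ on $\wt{\ov}$ whose associated semigroups include $\wt{\mathcal{S}}$. It remains to show that $\wt{k} = \wt{j}$ for some cocycle $j$ on $\ov$. The two projections $P$ and $Q$ in $\wt{\ov}$ obtained from~\eqref{tilde space} by $(\al, \be, a, b) = (1, 0, 0, 0)$ and $(0, 1, 0, 0)$ satisfy $\wt{\mapassocsemigp}^{x,y}_t(P) = e^{-t\chi(x,y)} P$ by~\eqref{t}, and the semigroup decomposition (Proposition~\ref{J}) combined with the factorisation $\ip{\w{f_{[0,t[}}}{\w{g_{[0,t[}}} = \prod_i e^{-(t_{i+1}-t_i)\chi(x_i, y_i)}$ forces $\wt{k}^{f,g}_t(P) = \ip{\w{f_{[0,t[}}}{\w{g_{[0,t[}}} P$ for all $f,g \in \Step_\Til$, whence $\wt{k}_t(P) = P \ot I_\Fock$, and similarly for $Q$. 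The Kadison--Schwarz multiplicative-domain argument used in the proof of Proposition~\ref{F}\tu{(b)} then gives $\wt{k}_t(P X Q) = (P \ot I_\Fock) \wt{k}_t(X) (Q \ot I_\Fock)$, so that $\wt{k}_t(\wt{a})$ sits in the top-right corner and canonically defines $j_t(a) \in \ov \otm B(\Fock)$ satisfying $\wt{j}_t(\wt{a}) = \wt{k}_t(\wt{a})$; treating $\wt{b}$ for $b \in \ov^\dagger$ similarly, we obtain $\wt{j}_t = \wt{k}_t$, and Corollary~\ref{B} certifies that $j$ is a CC cocycle whose associated semigroups include $\mathcal{S}$.

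For \tu{(ii)}$\Iff$\tu{(iii)}: Combining~\eqref{PosMats} with the representation~\eqref{a} of $\Mat_n(\wt{\ov})$, an element with $\la, \mu \in \Mat_n(\Comp)_{++}$, $A \in \Mat_n(\ov)$ and $B = A^*$ is positive if and only if $\|(I \ot \la^{-1/2}) A (I \ot \mu^{-1/2})\| \les 1$. Writing $A = (I \ot \la^{1/2}) A' (I \ot \mu^{1/2})$ for arbitrary $A' \in \Mat_n(\ov)$ of norm at most one, and noting by Lemma~\ref{pre S} that $\la \schur \wxt$ and $\mu \schur \wxt$ are likewise uniformly positive, the image under $\wt{\mapassocsemigp}^\bfx_t$ (computed via~\eqref{u}) is positive if and only if~\eqref{LPM} holds with $A'$ in place of $A$. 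An approximation $\la \mapsto \la + \ve I$, $\mu \mapsto \mu + \ve I$ then extends the equivalence from the uniformly-positive-diagonal case to arbitrary positive elements of $\Mat_n(\wt{\ov})$, boundedness of $\wt{\mapassocsemigp}^\bfx_t$ on positives being a consequence of~\eqref{LPM} specialised to $n=1$. The main obstacle throughout is the descent step \tu{(ii)}$\Rightarrow$\tu{(i)}: verifying that the cocycle produced by Theorem~\ref{Q} on the operator system $\wt{\ov}$ genuinely has the block-preserving form $\wt{j}$ required to come from a cocycle $j$ on $\ov$---this is precisely what the Kadison--Schwarz corner argument supplies.
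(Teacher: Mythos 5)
Your proof is correct and follows the same overall architecture as the paper's: Paulsen's $2\times 2$ trick, Proposition~\ref{PP} for \tu{(i)}$\Rightarrow$\tu{(ii)}, the block-matrix positivity criterion~\eqref{PosMats} together with the density of the invertible-diagonal positives for \tu{(ii)}$\Leftrightarrow$\tu{(iii)}, and Theorem~\ref{Q} applied to $\wt{\mathcal{S}}$ on the operator system $\wt{\ov}$ for the converse. The one place you genuinely diverge is the descent step. You prove that the CP unital cocycle $j$ on $\wt{\ov}$ produced by Theorem~\ref{Q} is automatically block-preserving --- via the normalisation $j_t(P)=P\ot I_\Fock$, $j_t(Q)=Q\ot I_\Fock$ forced by~\eqref{t} and the semigroup decomposition, followed by an Arveson extension and Choi's multiplicative-domain theorem --- and then read off $k$ from the top-right corner. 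The paper instead simply sets $k_t := \bigl(\pi \otm \id_{B(\Fock)}\bigr)\circ j_t\circ\varepsilon$ for the corner embedding $\varepsilon$ and its left inverse $\pi$; since each $\wt{\mapassocsemigp}^{x,y}_t$ intertwines $\varepsilon$ with $\mapassocsemigp^{x,y}_t$, Proposition~\ref{J} identifies the associated semigroups of this compression and certifies the cocycle property, and complete contractivity is immediate from the complete positivity and unitality of $j_t$. The paper's route is shorter and needs no multiplicative-domain argument; yours establishes the (stronger, though unneeded) fact that the Theorem~\ref{Q} cocycle on $\wt{\ov}$ already has the form $\wt{k}$. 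Both are valid.
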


\begin{proof}
Suppose that $\ov$ is an operator space in $B(\init; \init')$.

\smallskip
(\ref{kCC} $\Implies$ \ref{Ptxpos}): Let $k$ be a CC cocycle on
$\ov$. Then, by Corollary~\ref{B}, the cocycle $\wt{k}$ on
$\wt{\ov}$ is CP and unital, so by Proposition~\ref{PP} each
semigroup $\wt{\mapassocsemigp}^\bfx$ is positive.

\smallskip
(\ref{Ptxpos} $\Leftrightarrow$ \ref{PCCineq}): Fix $n \in \Nat$,
$\bfx \in \Til^n$ and $t \ges 0$, and let
$\wt{\mapassocsemigp}^\bfx$ be the semigroup constructed from
$\mapassocsemigp^\bfx$ as in~\eqref{op to map cocycles}. Under the
identification of $\Mat_n(\wt{\ov})$ given in~\eqref{a}, set
\[
M := \left\{ \begin{bmatrix} I_{\init'} \ot \la & A \\ A^* & I_\init
\ot \mu \end{bmatrix} \in \Mat_n (\wt{\ov}): \la \text{ and } \mu
\text{ are invertible} \right\}.
\]
By the characterisation~\eqref{PosMats} of nonnegative block matrices
\begin{multline*}
\begin{bmatrix} I_{\init'} \ot \la & A \\ A^* & I_\init \ot
\mu \end{bmatrix} \in M_+ \\
\Longleftrightarrow \la, \mu \in \Mat_n (\Comp)_{++} \text{ and } A
\in \Mat_n (\ov) \text{ satisfies } \norm{\la^{-1/2} A \mu^{-1/2}}
\les 1.
\end{multline*}

Since $M_+$ is dense in the closed subset $\Mat_n(\wt{\ov})_+$ of
$\Mat_n(\wt{\ov})$, it follows that $\wt{\mapassocsemigp}^{\bfx}_t$
is positive if and only if $\mapassocsemigp^\bfx_t$
satisfies~\eqref{LPM}.

\smallskip
(\ref{PCCineq} $\Implies$ \ref{kCC}): Assume that~\eqref{Ptxpos}
and~\eqref{PCCineq} hold. Consider the family of semigroups
$\wt{\mathcal{S}} := \{\wt{\mapassocsemigp}^{x,y}: x,y \in \Til\}$
on the operator system $\wt{\ov}$ constructed from the family of
semigroups $\mathcal{S}e$ by the prescription~\eqref{t}. Since each
semigroup $\wt{\mapassocsemigp}^\bfx$ is positive and satisfies
$\wt{\mapassocsemigp}^\bfx_t (I_{\init' \op \init} \ot \square_n) =
I_{\init' \op \init} \ot \wxt$, Theorem~\ref{Q} ensures the
existence of a CP unital cocycle $j$ on $\wt{\ov}$ whose associated
semigroups include $\wt{\mathcal{S}}$. Letting $\varepsilon$ be the
embedding $\ov \to \wt{\ov}$, $a \mapsto \bigl[\begin{smallmatrix} 0
& a \\ 0 & 0
\end{smallmatrix}\bigr]$ and $\pi$ be its left-inverse $\wt{\ov} \to
\ov$, $\bigl[\begin{smallmatrix} \lambda & a \\ b & \mu
\end{smallmatrix}\bigr] \mapsto a$, the prescription $k_t := \bigl(
\pi \otm \id_{B(\Fock)} \bigr) \circ j_t \circ \varepsilon$ ($t \ges
0$), defines a completely contractive QS cocycle on $\ov$ whose
associated semigroups include the family
\[
\bigl\{ \bigl(\pi \circ \wt{\mapassocsemigp}^{x,y}_t \circ
\varepsilon \bigr)_{t\ges 0}: x,y \in \Til \bigr\}
\]
which equals $\mathcal{S}$. Thus~\eqref{kCC} holds.
\end{proof}

\begin{rem}
The applicability of criterion~\eqref{PCCineq} derives from its manifest
stability under pointwise limits.
\end{rem}

\subsection*{Contraction operator quantum stochastic cocycles}

In this section we give our final characterisation. It is derived from
the characterisation of CC cocycles in part~\eqref{PCCineq} of
Theorem~\ref{S}, via the correspondence between CB cocycles on
$\ket{\init}$ and bounded operator cocycles on $\init$ given in
Proposition~\ref{op to map cocycles}.

\begin{thm}
Let $\Til$ be a total subset of $\noise$ containing $0$ and let
$S=\{\opassocsemigp^{x,y}: x,y \in \Til\}$ be a family of semigroups
on a Hilbert space $\init$. Then there is a left contraction
operator cocycle on $\init$ whose associated semigroups include $S$
if and only if for all $n \ges 1$, $\la, \mu \in
\Mat_n(\Comp)_{++}$, $\bfx \in \Til^n$, $t \ges 0$ and $A \in \Mat_n
(\ket{\init})$,
\[
\norm{(\la \schur \wxt)^{-1/2} \bigl( \opassocsemigp^\bfx_t \schur
(\la^{1/2} A \mu^{1/2}) \bigr) (\mu \schur \wxt)^{-1/2}} \les
\norm{A}.
\]
\end{thm}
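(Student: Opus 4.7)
The plan is to reduce the statement to Theorem~\ref{S}\,\ref{PCCineq} via the operator-to-CB correspondence furnished by Propositions~\ref{basic mechanism} and~\ref{op to map cocycles}. Specifically, I would set up a bijection between left contraction operator cocycles $X$ on $\init$ and completely contractive QS cocycles $k$ on $\ket{\init}$, given by
\[
k_s(\ket{u}) = X_s \bigl( \ket{u} \ot I_\Fock \bigr), \quad u \in \init, s \ges 0.
\]
In the forward direction, Proposition~\ref{op to map cocycles}\,(a) gives that $k := \flowk{1}$ is a QS cocycle on $\ket{\init}$ with $\cbnorm{k_s} = \norm{X_s}$, so contractivity of $X$ corresponds to complete contractivity of $k$. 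For the reverse direction, given such a $k$, Proposition~\ref{basic mechanism} yields a unique $X_s \in B(\init \ot \Fock)$ with $\norm{X_s} = \cbnorm{k_s}$ realising~$k_s$; adaptedness of $X$ then follows from the adaptedness condition $k_t(\ket{u}) \in \ket{\init} \otm B(\Fock_{[0,t[}) \ot I_{[t,\infty[}$ by testing against product vectors $\w{h} \ot \w{g}$ with $h \in L^2([0,t[; \noise)$ and $g \in L^2([t,\infty[; \noise)$, and the left cocycle identity for $X$ drops out of Proposition~\ref{op to map cocycles}\,(a).

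Next I would identify the associated semigroups across this correspondence. From~\eqref{k12 semigps}, the associated semigroups $\mapassocsemigp^{x,y}$ of $k$ on $\ket{\init}$ are exactly left multiplication by $\opassocsemigp^{x,y}_t$, i.e.\ $\mapassocsemigp^{x,y}_t(\ket{u}) = \ket{\opassocsemigp^{x,y}_t u}$. Under the identification $\Mat_n(\ket{\init}) = B(\Comp^n; \init^n)$, the Schur-action map $\mapassocsemigp^{\bfx}_t$ on $\Mat_n(\ket{\init})$ therefore acts on a matrix $A = [\ket{u^i_j}]$ by
\[
\mapassocsemigp^{\bfx}_t(A) = \bigl[ \ket{\opassocsemigp^{x_i,x_j}_t u^i_j} \bigr] = \opassocsemigp^{\bfx}_t \schur A,
\]
where the right-hand side denotes the Schur product in the obvious componentwise sense of operator matrices on $\init$ against matrices in $\Mat_n(\ket{\init})$. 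Granting this identification, the inequality~\eqref{LPM} of Theorem~\ref{S}\,\ref{PCCineq} for $\ov = \ket{\init}$ is precisely the inequality stated in the present theorem.

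Putting these pieces together: a left contraction operator cocycle on $\init$ with associated semigroups including $S$ exists if and only if a completely contractive QS cocycle on $\ket{\init}$ exists whose associated semigroups include $\{\mapassocsemigp^{x,y}: x,y \in \Til\}$ with $\mapassocsemigp^{x,y}_t(\ket{u}) = \ket{\opassocsemigp^{x,y}_t u}$, which by Theorem~\ref{S}\,\ref{PCCineq} happens if and only if the stated matrix norm inequalities hold.

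The main (minor) obstacle is the bookkeeping in the second paragraph: verifying the Schur-action formula for $\mapassocsemigp^{\bfx}_t$ on $\Mat_n(\ket{\init})$ in a manner consistent with Proposition~\ref{kft maps} and the identifications $\Mat_n(B(\Comp; \init)) = B(\Comp^n; \init^n)$ used throughout the paper, so that the inequality of Theorem~\ref{S}\,\ref{PCCineq} transcribes cleanly into the operator-matrix form displayed in the theorem. Everything else is a direct application of the already-established correspondences and of Theorem~\ref{S}.
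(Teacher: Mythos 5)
Your proposal is correct and follows exactly the paper's own route: the paper's proof is the one-line observation that the result ``follows from Proposition~\ref{op to map cocycles}, the identities~\eqref{k12 semigps} and Theorem~\ref{S}'', which is precisely the correspondence $X \leftrightarrow k$ on $\ket{\init}$, the identification $\mapassocsemigp^{x,y}_t(\ket{u}) = \ket{\opassocsemigp^{x,y}_t u}$, and the transcription of~\eqref{LPM} that you spell out. The extra bookkeeping you flag (Schur action on $\Mat_n(\ket{\init}) = B(\Comp^n;\init^n)$) is handled correctly and matches the paper's intended reading.
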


\begin{proof}
This now follows from Proposition~\ref{op to map cocycles}, the
identities~\eqref{k12 semigps} and Theorem~\ref{S}.
\end{proof}

This theorem has led to new results on QS differential equations
with unbounded coefficients for contraction operator processes
(\cite{egs}). Moreover, a very recent infinitesimal analysis of
holomorphic contraction operator QS cocycles (\cite{LiS}), which
goes beyond the realm of quantum stochastic differential equations
as currently understood, is also underpinned by this
characterisation.

%%%%%%%%%%%%%%%%%%%%%%%%%%%%%%%%%%%%%%%%%%%%%%%%%
%\begin{comment}
%** I have commented out arXiv references to articles that have
%appeared in journals (for now) - one justification for keeping one in
%might be if it contains more than the journal version, like the
%Arveson article on domains. What do you think, Steve? **
%\end{comment}
%%%%%%%%%%%%%%%%%%%%%%%%%%%%%%%%%%%%%%%%%%%%%%%%%

 % deliberate mistake $
\end{document}